\documentclass[11pt]{amsart}

\usepackage{verbatim}
\usepackage{url}
\usepackage{hyperref}

\usepackage[autostyle]{csquotes}
\usepackage[
    backend=biber,
    url=true,
    style=alphabetic,
    sorting=nty,
    uniquename=init]{biblatex}
\bibliography{bibliography}

\usepackage{mathrsfs}
\usepackage{amsmath, amssymb, amsgen, amsxtra, amsfonts, amsbsy}
\usepackage{amsthm}
\usepackage{mathtools}
\usepackage{changepage}
\usepackage{enumitem}

\makeatletter
\newcommand{\customanchor}[1]{\vspace{-20pt}\Hy@raisedlink{\hypertarget{#1}{}}}
\makeatother

\theoremstyle{definition}
\newtheorem{df}{Definition}[section]

\newtheorem*{ack}{Acknowledgements}
\newtheorem*{org}{Organisation}

\theoremstyle{plain}
\newtheorem{thm}[df]{Theorem}
\newtheorem{lem}[df]{Lemma}
\newtheorem{pro}[df]{Proposition}
\newtheorem{cor}[df]{Corollary}

\theoremstyle{remark}
\newtheorem{rem}[df]{Remark}

\newtheorem*{claim*}{Claim}

\usepackage{nicematrix}
\NiceMatrixOptions{cell-space-limits = 1pt}
\usepackage{arydshln} 
\usepackage{tabularray}
\UseTblrLibrary{booktabs}

\usepackage[width=1.03\textwidth, font=small]{caption}

\usepackage{tikz-cd}
\usepackage{array, multirow, graphicx}

\begin{document}
\makeatletter
\@namedef{subjectclassname@2020}{\textup{2020} Mathematics Subject Classification}
\makeatother

\title[Moduli of Supersingular Abelian Varieties]{A Computational Framework for the Moduli of Supersingular Abelian Varieties}
\author{Michael Burger}
\address{TU M\"unchen, School of Computation, Information and Technology (CIT), Boltzmannstr. 3, 85748 Garching bei M\"unchen, Germany}
\email{michael.burger@tum.de}

\subjclass[2020]{14K10, 11G10, 14L05}
\keywords{moduli space, supersingular abelian variety, superspecial abelian variety, Dieudonné modules}

\begin{abstract}
We establish structure theorems for polarised flag type quotients arising in the study of the moduli space of supersingular abelian varieties. In particular, we prove a refined normal form for the top level of these flags and derive an explicit, computable descent criterion for quasi-polarisations. These results provide a complete classification of the first and last step of these flags. 

As an application, we compute polarised flag type quotients in dimensions $g\leq 5$, describing the fibers of the truncation morphisms as classification objects of supersingular abelian varieties. 
\end{abstract}

\setcounter{tocdepth}{1}
\vspace*{-35pt}
\maketitle
\vspace{-30pt}
\tableofcontents
\vspace{-30pt}

\section{Introduction}

The moduli space $\mathcal{A}_g$ of principally polarised abelian varieties and its supersingular locus $\mathcal{S}_g$ are central objects in arithmetic geometry. While the existence of $\mathcal{S}_g$ as a (coarse) moduli space follows from geometric invariant theory, this approach does not provide an explicit description of its geometry. After preliminary work due to Oda and Oort in \parencite{OdaOort}, a powerful strategy aimed at solving this problem was introduced by Li and Oort in \parencite{LiOort} via the framework of \emph{polarised flag type quotients} (PFTQs for short). More recently, Viehmann used Rapoport-Zink uniformisation to analyse the generic $a=1$ locus inside $\mathcal{S}_g$ in \parencite{Viehmann}. While this approach gives an explicit description of a dense open subset of $\mathcal{S}_g$, PFTQs provide a framework for studying all points of $\mathcal{S}_g$, including those outside the dense open locus, and thus are particularly well suited for describing the moduli space in its entirety. 
However, while the theoretical framework is well-established, explicit calculations of these PFTQs remain cumbersome. Therefore, the existing literature only contains partial results (see, for example \parencite{Harashita, Dina, Pieper}) focusing on isolated steps of the PFTQs in selected dimensions rather than systematically treating these structures. 
This article aims to close this gap by developing a computational framework for PFTQs in arbitrary dimension, providing a unified approach to the calculations that arise throughout the construction of these flags.

\subsection{Structural results}
The main contribution of this article is to bundle the computations into two structural results that can be used as black boxes.
\begin{itemize}
    \item[(1)] Theorem \ref{thm:Structure} and its corollary establish a normal form for the quasi-polarised superspecial Dieudonné module at the top of a PFTQ, refining the structure theorem due to Li and Oort \parencite[Theorem 6.1]{LiOort} by exploiting the additional constraints of the PFTQ structure.
    \item[(2)] Lemma \ref{lem:descentcriterion} replaces the abstract descent condition in the definition of PFTQs by an explicit integrality condition on the quasi-polarisation, giving a computable criterion for descending down the PFTQ. 
\end{itemize}
These two results establish a framework for computing the moduli of supersingular abelian varieties in arbitrary dimension by making each step of the PFTQ explicit and calculable. In particular, they already provide a complete classification of the first and last step of these flags in all dimensions.

\subsection{Application in low dimensions} To show the strength of the established framework, we carry out the explicit calculations in dimensions $g \leq 5$ and systematically discuss each dimension: The lowest-dimensional case is not of particular interest as supersingular elliptic curves can be fully described by classical methods, see \parencite{Deuring, Igusa}. Thus, our calculations in Section \ref{sec:calculations} begin with dimension $g=2$. For $g = 2,3$, the structure is fully determined by the black-box theorems and we recover known descriptions of the moduli spaces, see \parencite{MoretBailly, IbukiyamaKaremaker}. Dimension $g = 4$ is particularly interesting as we have to introduce a new approach to describe all PFTQs and not only the ones leading to supergeneral abelian varieties, which involves splitting parameter choices into generic and arbitrary ones by their rationality over $\mathbb{F}_{p^2}$. Lastly, we compute PFTQs in dimension $g=5$, obtaining novel equations describing both generic and arbitrary parameter choices. 

\subsection{Fibers of the truncation morphisms}
These calculations yield fibers of truncation morphisms which are higher-dimensional generalisations of well-known classification objects like the Moret-Bailly parameter.

\begin{thm}
Let $k$ be algebraically closed of characteristic $p$. Then, Table \ref{tab:gen} collects the fibers of the truncation morphisms for generic parameters. 
\begin{table}[h!]
    \vspace{-8pt}
    \normalfont
    \scriptsize
    \centering
    \begin{tblr}{
    colspec = {ccc},
    colsep = 3pt,
    rowsep = 1.5pt,
    vlines
    }
    \hline
    \textbf{$g$} & \textbf{morph.} & \textbf{fiber description} \\
    \hline \hline
    \hyperref[sec:g2]{$2$} & $\mathcal{V}_0\to \mathcal{V}_1$ & $\mathbb{P}^1$ \\
    \hline
    \SetCell[r=2]{c} \hyperref[sec:g3]{$3$}
        & $\mathcal{V}_1 \to \mathcal{V}_2$ & $\alpha_1^{p+1} + \alpha_2^{p+1} + \alpha_3^{p+1} $ \\
        & $\mathcal{V}_0 \to \mathcal{V}_1$ & $\mathbb{P}^1$ \\
    \hline
    \SetCell[r=3]{c} \hyperref[sec:g4]{$4$}
        & $\mathcal{V}_2\to \mathcal{V}_3$ & $\alpha_1 \alpha_2^{p^2} - \alpha_2\alpha_1^{p^2} + \alpha_3\alpha_4^{p^2} - \alpha_4\alpha_3^{p^2}$ \\
        & $\mathcal{V}_1 \to \mathcal{V}_2$ & $\alpha_1\alpha_6^p + \alpha_3\alpha_7^p - \alpha_6 \alpha_5^{p-1}\alpha_1^p - \alpha_7 \alpha_5^{p-1}\alpha_3^p $ \\
        & $\mathcal{V}_0 \to \mathcal{V}_1$ & $\mathbb{P}^1$ \\
    \hline
   \SetCell[r=5]{c} \hyperref[sec:g5]{$5$}
       & \SetCell[r=2]{c} $\mathcal{V}_3 \to \mathcal{V}_4$
           & $\alpha_1^{p^3+1} + \alpha_2^{p^3+1} + \alpha_3^{p^3+1} + \alpha_4^{p^3+1} + \alpha_5^{p^3+1}$ \\
           & & $\alpha_1^{p+1} + \alpha_2^{p+1} + \alpha_3^{p+1} + \alpha_4^{p+1} + \alpha_5^{p+1}$ \\
       & $\mathcal{V}_2 \to \mathcal{V}_3$ & $-\alpha_3\alpha_7^{p^2}-\alpha_4\alpha_8^{p^2}-\alpha_5\alpha_9^{p^2} + \alpha_7\alpha_6^{p^2-1}\alpha_3^{p^2} + \alpha_8\alpha_6^{p^2-1}\alpha_4^{p^2}+\alpha_9\alpha_6^{p^2-1}\alpha_5^{p^2}$ \\
       & $\mathcal{V}_1 \to \mathcal{V}_2$ & $\alpha_6\alpha_4\alpha_{11}^p \mathord{+}\alpha_{11}\alpha_{10}^{p-1}\alpha_6^p\alpha_4^p \mathord{+} \alpha_6\alpha_5\alpha_{12}^p \mathord{+} \alpha_{12}\alpha_{10}^{p-1}\alpha_6^p\alpha_5^p \mathord{+} \alpha_{10}^p\alpha_7^{p+1} \mathord{+} \alpha_{10}^p\alpha_8^{p+1}\mathord{+}\alpha_{10}^p\alpha_9^{p+1}$ \\
       & $\mathcal{V}_0 \to \mathcal{V}_1$ & $\mathbb{P}^1$ \\
    \hline        
    \end{tblr}
    \vspace{-12pt}
    \caption{Fibers of the truncation morphisms for generic parameters.}
    \label{tab:gen}
    \vspace{-32pt}
\end{table}
\end{thm}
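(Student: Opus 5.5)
The plan is to go step by step down the polarised flag type quotient in each dimension $g\in\{2,3,4,5\}$, starting from the top and working towards the bottom. Recall that a PFTQ is a chain $M_{g-1}\supset M_{g-2}\supset\cdots\supset M_0$ of Dieudonné modules. Here $M_{g-1}$ is the superspecial module carrying the quasi-polarisation $p^{g-1}\eta$. Each $M_{i-1}\subset M_i$ is determined by a choice of subspace of $M_i/(F,V)M_i$ subject to the descent condition. The truncation morphism $\mathcal{V}_{i-1}\to\mathcal{V}_i$ forgets the last step, so its fiber over a point of $\mathcal{V}_i$ parametrises the admissible $M_{i-1}$ inside a fixed $M_i$. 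For each $g$ I would compute these fibers in order from the top step $i=g-1$ down to $i=1$.

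The first step is to fix coordinates at the top using Theorem \ref{thm:Structure} and its corollary. These give a normal form for $(M_{g-1},\langle\,,\rangle)$: a basis in which $F$ and $V$ act by explicit matrices and the pairing is a standard form over $W(\mathbb{F}_{p^2})$. In this basis the top step $M_{g-2}\subset M_{g-1}$ is a line (or subspace) with coordinates $(\alpha_1,\dots,\alpha_g)$. The isotropy condition in the PFTQ definition then becomes a Hermitian-type equation. I expect this to be $\sum\alpha_j^{p+1}=0$ for $g=3$, a symplectic Frobenius-twisted form $\sum(\alpha_{2j-1}\alpha_{2j}^{p^2}-\alpha_{2j}\alpha_{2j-1}^{p^2})=0$ for $g=4$, and the pair of equations in $p+1$ and $p^3+1$ for $g=5$, since there the condition must hold after iterating $F$ twice. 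This reproduces the top rows of Table \ref{tab:gen}.

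The intermediate steps use Lemma \ref{lem:descentcriterion}. Given the parameters chosen so far, I would write an explicit basis of $M_i$, adapted by column operations and working on the generic locus where a suitable coordinate, normalised to be $\alpha_5$, $\alpha_6$ or $\alpha_{10}$, is invertible and not in $\mathbb{F}_{p^2}$. I would then parametrise the next submodule by new coordinates. The criterion says descent holds precisely when the pairing $p^{i-1}\langle\,,\rangle$ restricted to the new module has integral values. Expanding the single nontrivial entry of that pairing matrix produces the listed polynomials. For example, in $g=4$ the condition $\alpha_1\alpha_6^p+\alpha_3\alpha_7^p-\alpha_6\alpha_5^{p-1}\alpha_1^p-\alpha_7\alpha_5^{p-1}\alpha_3^p$ arises from pairing the new vector against its Frobenius translate. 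The bottom step $\mathcal{V}_0\to\mathcal{V}_1$ is handled uniformly. There $M_1/M_0$ is one-dimensional inside a two-dimensional space on which the criterion imposes no condition, so the fiber is $\mathbb{P}^1$. The case $g=2$ consists of this step alone.

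The main obstacle is bookkeeping in the intermediate steps for $g=5$. Here the basis of $M_i$ must be rewritten after each choice, and the Frobenius-semilinear action mixes the parameters. To keep this manageable I would use the genericity assumption, that the parameters are not $\mathbb{F}_{p^2}$-rational, to make the relevant minors invertible, reduce to an upper-triangular shape, and then identify the unique entry of the pairing matrix that is not automatically integral. I would also check that the resulting equations are independent of the normalisation choices, so that each one really describes the fiber.
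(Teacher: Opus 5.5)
Your overall route is the paper's:
\begin{itemize}
\item the normal form of Theorem~\ref{thm:Structure};
\item the first step via Theorem~\ref{thm:firststepPFTQ};
\item the intermediate steps, by computing $M_i/(F,V)M_i$ on the supergeneral locus, lifting one new generator and applying Lemma~\ref{lem:descentcriterion};
\item the last step via Theorem~\ref{thm:laststepPFTQ}. There the two-dimensional space is $M_1/M_1^t$, of dimension $2$ because $\deg\eta_1=p^2$, and descent is automatic because the form is alternating.
\end{itemize}
However, the central tool is misstated. The criterion is not integrality of a rescaled pairing $p^{i-1}\langle\,,\rangle$ on the new module. It is $\langle w,F^{i-j}V^jw\rangle\in W(k)$ for all $0\le j\le\lfloor i/2\rfloor$, where $M_i=\langle w,FM_{i+1},VM_{i+1}\rangle$. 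The semilinear factor in $F^{i-j}V^j=p^jF^{i-2j}$ is what produces the Frobenius powers $\alpha^{p^{i-2j}}$ in the equations. The range of $j$ is why $g=5$ gives both the $p^3+1$ equation ($j=0$) and the $p+1$ equation ($j=1$, since $F^2V=pF$); it is not a matter of ``iterating $F$ twice''. Read literally, your condition imposes nothing. At the $g=4$ step $M_2\supset M_1$, one has $p\langle x,y\rangle\in W(k)$ for all generators $x,y\in\{w,Fv,Vv,pe_i,pf_i\}$ of $M_1$. Your worked example does use the correct pairing $\langle w,Fw\rangle$, so this is repairable, but the general statement must be replaced by the lemma as stated.

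Two further ingredients are missing.

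First, genericity in the paper means $[\alpha_1:\cdots:\alpha_g]\notin\mathbb{P}^{g-1}(\mathbb{F}_{p^2})$, plus the analogous condition on $[\alpha_6:\cdots:\alpha_9]$ at the second intermediate step for $g=5$. Its role is to give $a(M_{g-2})=g-1$ for $g=4,5$, and $a(M_2)=3$ for $g=5$ (Propositions~\ref{pro:anumberY2}, \ref{pro:anumberY3}, \ref{pro:anumberM2}). This is what guarantees exactly one new generator, so that the one-generator criterion applies. It is not a condition on $\alpha_5,\alpha_6,\alpha_{10}$. Those coordinates enter differently: the descent computation yields $\alpha_5(\cdots)=0$, $\alpha_6(\cdots)=0$ and $\alpha_{10}(\cdots)=0$, and the table lists only the second factor. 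Discarding the first factor is not just ``working where it is invertible''. It needs the garbage-component argument of Proposition~\ref{pro:garbagecomponent}: if that coordinate vanishes, the new module lies in $FM_{g-1}$, hence so does $M_0$. This contradicts rigidity $M_{g-2}=M_0+FM_{g-1}$, since $v\notin FM_{g-1}$.

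Second, at $g=5$, $\mathcal{V}_1\to\mathcal{V}_2$, the relevant entry does not have a single simple pole. The pairing $\langle u,Fu\rangle$ contains $p^{-2}\sum_i\underline{(\alpha_{10}\alpha_6\alpha_i)^{p+1}}$. The paper removes this using the $\mathcal{V}_3$-equation $\sum_i\alpha_i^{p+1}=0$, then reads the table entry off the $p^{-1}$-part.

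If you carry this out yourself, note that this equation only gives $\sum_i\underline{\alpha_i^{p+1}}\in pW(k)$. The residue $\tau$ of $p^{-1}\sum_i\underline{\alpha_i^{p+1}}$ is a Witt-vector carry and is in general nonzero. For $p=2$, $\tau^2$ is the second elementary symmetric polynomial in $\alpha_1^3,\dots,\alpha_5^3$. This carry adds $\alpha_{10}^{p}\alpha_6^{p+1}\tau$ to the tabulated polynomial. The paper's step of requiring the two coefficients to vanish separately does not record this term. It can be absorbed by a base-dependent change of fiber coordinates, for instance shifting $\alpha_{11}$ by a suitable multiple of $\alpha_{10}$ when $\alpha_4\neq0$. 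Either way you must account for it, because the literal expansion does not land on the tabulated expression.
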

\pagebreak

\noindent Another key result is the description of the fibers of the truncation morphisms for arbitrary parameters. While the equations computed with generic parameter choices only capture the behaviour of supergeneral abelian varieties, the following equations describe all supersingular abelian varieties. 

\begin{thm}
Let $k$ be algebraically closed of characteristic $p$. Then, Table \ref{tab:arb} collects the fibers of the truncation morphisms for arbitrary parameters.
\begin{table}[h!]
    \centering
    \normalfont
    \scriptsize
    \vspace{-2pt}
    \makebox[\textwidth][c]{
    \begin{tblr}{
    colspec = {ccc},
    colsep = 3pt,
    rowsep = 2.5pt,
    vlines
    }
    \hline
    \textbf{$g$} & \textbf{morph.} & \textbf{fiber description} \\
    \hline \hline
    \hyperref[sec:g2]{$2$} & $\mathcal{V}_0\to \mathcal{V}_1$ & $\mathbb{P}^1$ \\
    \hline
    \SetCell[r=2]{c} \hyperref[sec:g3]{$3$}
        & $\mathcal{V}_1 \to \mathcal{V}_2$ & $\alpha_1^{p+1} + \alpha_2^{p+1} + \alpha_3^{p+1} $ \\
        & $\mathcal{V}_0 \to \mathcal{V}_1$ & $\mathbb{P}^1$ \\
    \hline
    \SetCell[r=5]{c} \hyperref[sec:g4]{$4$}
        & $\mathcal{V}_2 \to \mathcal{V}_3$ & $\alpha_1 \alpha_2^{p^2} - \alpha_1^{p^2} \alpha_2 + \alpha_3 \alpha_4^{p^2} - \alpha_3^{p^2} \alpha_4$ \\
        & \SetCell[r=3]{c} $\mathcal{V}_1 \to \mathcal{V}_2$
            & $\alpha_1\alpha_6^p + \alpha_3\alpha_8^p- \alpha_4\alpha_7^p-\alpha_6\alpha_1^p\alpha_5^{p-1}+\alpha_7\alpha_4^p\alpha_5^{p-1} - \alpha_8\alpha_3^p\alpha_5^{p-1}$\\
            & & $\alpha_1\alpha_9^p + \alpha_3\alpha_{11}^p-\alpha_4\alpha_{10}^p$ \\
            & & $-\alpha_9\alpha_1 ^p + \alpha_{10}\alpha_4^p-\alpha_{11}\alpha_3^p$ \\
        & $\mathcal{V}_0 \to \mathcal{V}_1$ & $\mathbb{P}^1$ \\
    \hline
    \SetCell[r=8]{c} \hyperref[sec:g5]{$5$}
    & \SetCell[r=2]{c} $\mathcal{V}_3 \to \mathcal{V}_4$
            & $\alpha_1^{p^3+1} + \alpha_2^{p^3+1} + \alpha_3^{p^3+1} + \alpha_4^{p^3+1} + \alpha_5^{p^3+1}$ \\
            & & $\alpha_1^{p+1} + \alpha_2^{p+1} + \alpha_3^{p+1} + \alpha_4^{p+1} + \alpha_5^{p+1}$ \\
    & \SetCell[r=4]{c} $\mathcal{V}_2 \to \mathcal{V}_3$
            & $-\alpha_2\alpha_7^{p^2}-\alpha_3\alpha_8^{p^2}-\alpha_4\alpha_9^{p^2}-\alpha_5\alpha_{10}^{p^2} +\alpha_7\alpha_6^{p^2-1}\alpha_2^{p^2}+\alpha_8\alpha_6^{p^2-1}\alpha_3^{p^2}+\alpha_9\alpha_6^{p^2-1}\alpha_4^{p^2}+\alpha_{10}\alpha_6^{p^2-1}\alpha_5^{p^2}$ \\
            & & $-\alpha_2\alpha_{11}^{p^2}-\alpha_3\alpha_{12}^{p^2}-\alpha_4\alpha_{13}^{p^2}-\alpha_5\alpha_{14}^{p^2}$ \\
            & & $\alpha_{11}\alpha_2^{p^2}+\alpha_{12}\alpha_3^{p^2}+\alpha_{13}\alpha_4^{p^2}+\alpha_{14}\alpha_5^{p^2}$ \\
            & & $\alpha_2\alpha_{11} + \alpha_3\alpha_{12} + \alpha_4\alpha_{13} + \alpha_5\alpha_{14}$ \\
    & $\mathcal{V}_1 \to \mathcal{V}_2$ & see equation (\ref{eq:arb5})\\
    & $\mathcal{V}_0 \to \mathcal{V}_1$ & $\mathbb{P}^1$\\
    \hline
    \end{tblr}
    }
    \vspace{-10pt}
    \caption{Fibers of the truncation morphisms for arbitrary parameters.}
    \label{tab:arb}
    \vspace{-10pt}
\end{table}
\end{thm}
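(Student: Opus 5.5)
This theorem summarises the computations of Section \ref{sec:calculations}, so I would prove it dimension by dimension, each time walking down the PFTQ from the top level $M_{g-1}$ to $M_0$. A PFTQ of dimension $g$ is a chain $M_{g-1}\supset M_{g-2}\supset\cdots\supset M_0$ of Dieudonné modules, where each $M_{i}/M_{i-1}$ is killed by $F$ and $V$ and has prescribed length. The truncation morphism $\mathcal{V}_{i}\to\mathcal{V}_{i+1}$ forgets the step $M_{i}\subset M_{i+1}$. Its fiber over a fixed truncated flag is therefore the locus of admissible subspaces $M_i/(F,V)M_{i+1}$ inside a fixed $k$-vector space. Admissibility means two things: the subspace must be stable under the relevant Frobenius-semilinear structure, and the quasi-polarisation must descend.

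The first step fixes coordinates at the top. By Theorem \ref{thm:Structure} and its corollary, $M_{g-1}$ is superspecial with quasi-polarisation in the refined normal form, and the basis can be taken so that $F=V$ acts by explicit matrices. Choosing $M_{g-2}$ amounts to choosing a vector, or for $g=5$ a small-dimensional subspace, in $M_{g-1}/FM_{g-1}\cong k^g$. Applying Lemma \ref{lem:descentcriterion} to the normal form then turns the descent condition into explicit equations in the coordinates $\alpha_j$. These are the Hermitian-type forms $\sum\alpha_j^{p+1}$ and $\sum\alpha_j^{p^3+1}$, and the alternating form $\sum(\alpha_{2j-1}\alpha_{2j}^{p^2}-\alpha_{2j}\alpha_{2j-1}^{p^2})$ in the even case, according to which Frobenius twist of the form governs integrality in that dimension.

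Next I descend step by step. For each $i$ I parametrise $M_i$ in terms of the previous parameters together with new ones, using a basis of $M_{i+1}$ written via the earlier $\alpha$'s. I then impose Lemma \ref{lem:descentcriterion} again and collect the integrality conditions. This yields the mixed equations such as $\alpha_1\alpha_6^p-\alpha_6\alpha_5^{p-1}\alpha_1^p+\cdots$. At the bottom, the last step $\mathcal{V}_0\to\mathcal{V}_1$ is a choice of a line in a $2$-dimensional space with no further condition, because $M_0$ must be the superspecial-to-$a=1$ endpoint, and the criterion is automatic there. Hence the fiber is $\mathbb{P}^1$ uniformly in $g$, as the corollary's classification of the last step asserts.

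For Table \ref{tab:gen}, "generic" means that the top-level parameters are not all in $\mathbb{F}_{p^2}$. Under that assumption, certain elements such as $\alpha_5$ or $\alpha_6$ can be normalised to be nonzero pivots, which lets me eliminate variables and leaves a single equation per step. For Table \ref{tab:arb} I drop this assumption. The basis of $M_{i+1}$ may then degenerate, so I cover all cases by keeping extra parameters ($\alpha_9,\alpha_{10},\dots$). This produces the additional equations listed, for instance $\alpha_1\alpha_9^p+\alpha_3\alpha_{11}^p-\alpha_4\alpha_{10}^p$ and its conjugate for $g=4$, the four equations for $g=5$ at $\mathcal{V}_2\to\mathcal{V}_3$, and equation (\ref{eq:arb5}).

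I expect the main obstacle to be the middle steps for $g=4,5$ with arbitrary parameters. There one needs a single uniform parametrisation of $M_{i+1}$ valid whether or not the pivot coordinates vanish, and then has to verify that the resulting equations cut out exactly the descent locus, with no spurious components or missing degenerations. My first attempt would be to present the generators of $M_{i+1}$ as a matrix whose entries are the previous $\alpha$'s and their Frobenius twists, and to compute the Gram matrix of the quasi-polarisation on it symbolically. I would then check that the generic and degenerate cases agree on the overlap, so that the arbitrary-parameter equations specialise to the generic ones when the pivot is invertible.
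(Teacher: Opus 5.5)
Your outline has the paper's architecture: normal form at the top, Theorem \ref{thm:firststepPFTQ} for the first step, parametrise-then-apply-Lemma \ref{lem:descentcriterion} in the middle, and Theorem \ref{thm:laststepPFTQ} at the bottom. But for the middle rows of Table \ref{tab:arb} (the only rows that differ from Table \ref{tab:gen}) it lacks the idea that makes the computation possible, and you leave this as the open obstacle yourself.

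You describe the fiber as subspaces $M_i/(F,V)M_{i+1}$ of ``a fixed $k$-vector space''. But $\dim_k M_{i+1}/(F,V)M_{i+1}=a(M_{i+1})$ jumps with the parameters (for $g=4$, $a(Y_2)\in\{3,4\}$ by Proposition \ref{pro:anumberY2}), so no uniform parametrisation lives there. The paper instead uses
$0\to M_i/FM_{i+1}\to M_{i+1}/FM_{i+1}\to M_{i+1}/M_i\to 0$. By Lemma \ref{lem:PFTQLemma}(ii), $M_{i+1}/FM_{i+1}$ is the cotangent space and has dimension $g$ for \emph{every} parameter. Hence $M_i/FM_{i+1}$ always has dimension $g-i-1$ and is spanned by $g-i-1$ vectors in echelon form in an explicit basis. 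For $g=4$ the basis is $[v],[f_2],[f_3],[f_4]$, giving $w_1=\underline{\alpha_5}v+\underline{\alpha_6}f_2+\underline{\alpha_7}f_3+\underline{\alpha_8}f_4$ and $w_2=\underline{\alpha_9}f_2+\underline{\alpha_{10}}f_3+\underline{\alpha_{11}}f_4$.

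Descent is then tested with the generalised criterion of Remark \ref{rem:generaliseddescent}, that is, on all ordered pairs $\langle w_a,F^{i-j}V^jw_b\rangle$. The additional equations in the table are exactly these cross terms:
\begin{itemize}
\item for $g=4$: $\langle w_1,Fw_2\rangle$ and $\langle w_2,Fw_1\rangle$, while $\langle w_2,Fw_2\rangle=0$;
\item for $g=5$ at $\mathcal{V}_2\to\mathcal{V}_3$: $\langle w_1,F^2w_2\rangle$, $\langle w_2,F^2w_1\rangle$ and $\langle w_1,FVw_2\rangle=p\langle w_1,w_2\rangle$;
\item equation (\ref{eq:arb5}): the nine pairings of three generators $u_1,u_2,u_3$.
\end{itemize}
Saying you will ``keep extra parameters'' without fixing the ambient quotient, the number of generators, and the cross-pairing conditions does not produce these polynomials, which are written in precisely these variables. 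A symbolic Gram matrix on generators of $M_{i+1}$, as you propose, handles the pairing computation but not the parametrisation of $M_i$.

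Two smaller points. First, the first step is always a single vector, also for $g=5$, since $\dim_k M_{g-1}/(F,V)M_{g-1}=g$ and $\dim_k M_{g-1}/M_{g-2}=g-1$. The two $g=5$ equations come from applying the criterion at level $i=3$ for both $j=0$ and $j=1$, not from choosing a larger subspace. Second, your reason for the last step (``$M_0$ must be the superspecial-to-$a=1$ endpoint'') is not valid, since $Y_0$ can have any $a$-number. The correct argument runs as follows. The polarisation $\eta_1=\rho_1^\vee\circ\eta_0\circ\rho_1$ has degree $p^2$, so $\dim_k M_1/M_1^t=2$ and $M_0/M_1^t$ is a line in $M_1/M_1^t$. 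The condition $M_0\subseteq M_0^t$ is then automatic, because $M_1^t\subseteq M_0^t$ and the quasi-polarisation is alternating.
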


\begin{org}
    The article is organised as follows: In Section \ref{sec:AV}, we review the theory of supersingular abelian varieties. In particular, we introduce their moduli and Dieudonné theory as the main technical tool for computations. Section \ref{sec:PFTQ} gives a short overview over the framework of polarised flag type quotients by fixing the necessary definitions and summarising the main results established by Li and Oort. Section \ref{sec:StructureTHM} is the technical heart of this article: Here, we develop general principles for the calculations of PFTQs and prove the two structural results simplifying these computations. Finally, Section \ref{sec:calculations} shows the power of the developed theory by computing polarised flag type quotients explicitly in low dimensions.
\end{org}

\begin{ack}
    I would like to express my gratitude to my doctoral advisor Christian Liedtke for his introduction to this topic, his continued support, and for all the helpful conversations pertaining to this article.   I also thank Steven Groen, Valentijn Karemaker, and Eva Viehmann for their comments and feedback on the first version of this article. 
\end{ack}

\newpage

\section{Dieudonné theory of abelian varieties}\label{sec:AV}

In this section, we give a review of supersingular abelian varieties and their moduli following \parencite{Karemaker}. Furthermore, we introduce the technical tool translating their study into semilinear algebra, namely Dieudonné theory. 

\subsection{Supersingular abelian varieties}

Let $k$ be a field of characteristic $p$ and let $\overline{k}$ be its algebraic closure. We call an elliptic curve $E$ \emph{supersingular} if its group of $\overline{k}$-rational $p$-torsion points is trivial, i.e.\ if $E[p](\overline{k}) = \{0\}.$ There are two related notions in higher dimension $g \geq 2$:

\begin{df}
    Let $X$ be a $g$-dimensional abelian variety over $k$.
    \begin{itemize}
        \item[(i)] $X$ is called \emph{supersingular} if it is isogenous to a product of $g$ supersingular elliptic curves over its algebraic closure: $X \simeq_{\overline{k}} E_1 \times \ldots \times E_g.$
        \item[(ii)] $X$ is called \emph{superspecial} if it is isomorphic to a product of $g$ supersingular elliptic curves over its algebraic closure: $X \cong_{\overline{k}} E_1 \times \ldots \times E_g.$
    \end{itemize}
\end{df}

\begin{rem}
    By a result due to Deligne, Ogus, and Shioda \parencite[Theorem 3.5]{Shioda}, for any supersingular elliptic curves $E_1, \ldots, E_{2g}$ with $g \geq 2$, we have
    $$E_1 \times \ldots \times E_g \cong E_{g+1} \times \ldots \times E_{2g}.$$
    In particular, all supersingular (resp.\ superspecial) abelian varieties of dimension $g\geq2$ are geometrically isogenous (resp.\ isomorphic), and the above definition is independent of the choice of supersingular elliptic curves. 
\end{rem}

As all supersingular abelian varieties are of $p$-rank $0$, this invariant is not useful for analysing these objects. However, the $a$-number will often be used:

\begin{df}
    Let $X$ be an abelian variety over $k$. Its \emph{$a$-number} is defined to be $$a(X) = \text{dim}_k(\text{Hom}(\alpha_p,X)).$$
    A supersingular abelian variety $X$ over $k$ is called \emph{supergeneral} if $a(X) = 1$. 
\end{df}

For an abelian variety $X$, we denote its dual by $X^{\vee}$. Given a line bundle $\mathcal{L}$ on $X$, we define $\varphi_{\mathcal{L}}: X \to X^{\vee}$ to be the homomorphism given by $x \mapsto [t_x^*\mathcal{L} \otimes \mathcal{L}^{-1}]$ on points, where the square bracket denotes the equivalence class of the line bundle. 

\begin{df}
    A \emph{polarisation} of $X$ is an isogeny $\lambda: X \to X^{\vee}$ such that there exists a finite field extension $k'\supseteq k$ and an ample line bundle $\mathcal{L}$ on $X \times k'$ such that $\lambda_{k'} = \varphi_{\mathcal{L}}$. We call a polarisation \emph{principal} if it is an isomorphism.
\end{df}

Let $\mathcal{A}_g$ be the moduli space of $g$-dimensional principally polarised abelian varieties. We denote its supersingular locus by $\mathcal{S}_g$, i.e.\ $$ \mathcal{S}_g = \{(X, \lambda) \in \mathcal{A}_g: X \text{ is supersingular}\}.$$ Note that $\mathcal{S}_g$ is a Zariski closed subset of $\mathcal{A}_g$ which can be given an induced reduced subscheme structure. Furthermore, it is a coarse moduli space for supersingular principally polarised abelian varieties. We refer to \parencite[Theorems 7.9 and 7.10]{Mumford} for GIT-theoretic proofs of existence of these spaces.

\subsection{Dieudonné theory}

Let $k$ be a perfect field of characteristic $p$ and let $W(k)$ be the ring of Witt vectors over $k$. We denote the Frobenius on $W(k)$ by $\sigma$ and the Teichmüller lift of an element $a\in k$ to $W(k)$ by $\underline{a} = (a, 0, \ldots)$.

\begin{df}
    The \emph{Dieudonné ring} $\mathcal{D}_k$ is the noncommutative ring over $W(k)$ generated by two indeterminates $F$ and $V$ subject to the relations
    {\small\begin{align*}
        Fw = \sigma(w)F \;\; \forall\, w \in W(k), \quad
        wV = V\sigma(w) \;\; \forall\, w \in W(k), \quad
        FV = VF = p.
    \end{align*}}
    A left module over $\mathcal{D}_k$ is called a \emph{Dieudonné module}.
\end{df}

The central theorem of Dieudonné theory is the following correspondence proven in this form in \parencite[Chapter III.8]{Demazure} (see also \parencite[Chapter V]{DemazureGabriel}).
\begin{thm}\label{thm:DieudonneBarsottiTate}
    There exists an antiequivalence $M$ from the category of Barsotti-Tate groups over $k$ to the category of Dieudonné modules that are free and of finite rank over $W(k)$. Moreover, the height of the Barsotti-Tate group is equal to the rank of the Dieudonné module and the Dieudonné module of the Serre dual $G^t$ is given by $M(G^t) = \mathrm{Hom}_{W(k)}(M(G), W(k)).$
\end{thm}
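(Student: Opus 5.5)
This theorem is a foundational result that the paper cites from \parencite[Chapter III.8]{Demazure} rather than proves. A proof would build the functor $M$ in stages, reducing to finite commutative group schemes of $p$-power order and then passing to the limit.

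\emph{Finite level.} For a finite commutative $p$-group scheme $H$ over the perfect field $k$, set
$$M(H) = \varinjlim_n \mathrm{Hom}_{k\text{-grp}}(H, W_n),$$
where $W_n$ is the group scheme of Witt vectors of length $n$. The operator $F$ acts through the Frobenius of $W_n$ and $V$ through Verschiebung, so $M(H)$ becomes a $\mathcal{D}_k$-module of finite $W(k)$-length.

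The key finite-level theorem is that $M$ is an exact antiequivalence from finite commutative $p$-group schemes to $\mathcal{D}_k$-modules of finite $W(k)$-length, and that the length of $M(H)$ equals $\log_p$ of the order of $H$. To prove it, I would split $H$ canonically, over a perfect field, into its étale-local, local-étale and local-local parts and treat each type separately:
\begin{itemize}
\item For \'etale $H$, use Galois descent to reduce to constant groups $\mathbb{Z}/p^n$, which correspond to modules on which $F$ is bijective.
\item For multiplicative $H$, reduce to $\mu_{p^n}$ by Cartier duality.
\item For local-local $H$, the infinitesimal unipotent case, dévisse by the Frobenius and Verschiebung filtrations down to $\alpha_p$. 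Compute $M(\alpha_p)=k$ with $F=V=0$, then use exactness and the five lemma to get full faithfulness and essential surjectivity.
\end{itemize}
Exactness comes from the vanishing of $\mathrm{Ext}^1(H,W)$ in the appropriate ind-category, i.e.\ the injectivity of $\varinjlim W_n$ on unipotent groups. I expect this vanishing, together with the treatment of the local-local part, to be the main obstacle.

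\emph{Barsotti--Tate level.} For a Barsotti--Tate group $G=\varinjlim G[p^n]$, define
$$M(G)=\varprojlim_n M(G[p^n]).$$
Each $M(G[p^n])$ is killed by $p^n$ and has length $nh$, where $h$ is the height of $G$. Applying exactness to the sequences $0\to G[p]\to G[p^{n+1}]\to G[p^n]\to 0$ shows that the transition maps are surjective with kernel $p^n M(G[p^{n+1}])$. Hence $M(G)$ is free of rank $h$ over $W(k)$, and $M(G)/p^n = M(G[p^n])$.

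For the converse direction, a free $\mathcal{D}_k$-module $N$ of rank $h$ gives the finite-length modules $N/p^n$. Take the corresponding finite group schemes $G_n$ and set $G=\varinjlim G_n$. This $G$ is Barsotti--Tate of height $h$ because multiplication by $p$ on the $G_n$ corresponds, under the antiequivalence, to the maps between the $N/p^n$. Full faithfulness passes to the limits. The height statement is exactly the rank count just obtained.

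\emph{Duality.} First check at finite level that Cartier duality corresponds to
$$M(H^D) \cong \mathrm{Hom}_{W(k)}\bigl(M(H), W(k)[1/p]/W(k)\bigr),$$
with the roles of $F$ and $V$ exchanged. This follows by verifying the pairing on the basic cases $\mathbb{Z}/p$, $\mu_p$ and $\alpha_p$ and then using exactness. The Serre dual satisfies $G^t[p^n]=G[p^n]^D$, so passing to the inverse limit over $n$ gives $M(G^t)=\mathrm{Hom}_{W(k)}(M(G),W(k))$.
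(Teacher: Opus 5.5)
The paper gives no proof of this statement; it only cites Demazure (Chapter III.8) and Demazure--Gabriel. Your outline follows the standard architecture of those sources: a finite-level theory via Witt groups, passage to $\varprojlim_n M(G[p^n])$, and levelwise duality via $G^t[p^n]=G[p^n]^D$. The Barsotti--Tate and duality steps are fine as sketched.

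\textbf{The gap.} The finite-level functor you define does not work, because $\varinjlim_n\mathrm{Hom}(H,W_n)$ only sees the unipotent part of $H$.
\begin{itemize}
\item $W_m$ is an iterated extension of copies of $\mathbb{G}_a$, via $0\to W_{m-1}\xrightarrow{V}W_m\to\mathbb{G}_a\to 0$.
\item $\mathrm{Hom}(\mu_{p^n},\mathbb{G}_a)$ is the space of primitive elements of the group algebra $\mathcal{O}(\mu_{p^n})=k[\mathbb{Z}/p^n]$, and this space is zero.
\item By left exactness, $\mathrm{Hom}(\mu_{p^n},W_m)=0$ for all $m$. Hence $M(\mu_{p^n})=0$, and the same holds for every group of multiplicative type.
\end{itemize}
This has several consequences:
\begin{itemize}
\item Your $M$ is neither faithful nor length-preserving on the local-\'etale part.
\item The bullet ``reduce to $\mu_{p^n}$ by Cartier duality'' has nothing to reduce to.
\item The duality check on the pair $(\mathbb{Z}/p,\mu_p)$ fails: $M(\mu_p)=0$, whereas $\mathrm{Hom}_{W(k)}\bigl(M(\mathbb{Z}/p),W(k)[1/p]/W(k)\bigr)\cong k$.
\item At the Barsotti--Tate level, $M(\mu_{p^\infty})=0$ rather than free of rank $1$. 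For example, the $p$-divisible group of an ordinary elliptic curve would get rank $1$ instead of $2$.
\end{itemize}
Consistently with this, the $\mathrm{Ext}^1$-vanishing you invoke is a statement about unipotent groups only.

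\textbf{The repair.} Define $M$ on the multiplicative part separately.
\begin{itemize}
\item Over the perfect field $k$, every finite commutative $p$-group scheme splits canonically as $H=H_{\mathrm{mult}}\times H_{\mathrm{unip}}$. There are no nonzero homomorphisms between the two types in either direction.
\item Keep your formula on $H_{\mathrm{unip}}$, which covers the \'etale-local and local-local parts.
\item Put $M(H_{\mathrm{mult}}):=\mathrm{Hom}_{W(k)}\bigl(M(H_{\mathrm{mult}}^D),W(k)[1/p]/W(k)\bigr)$ with $F$ and $V$ interchanged. Since $H_{\mathrm{mult}}^D$ is \'etale, this lands in modules with $V$ bijective.
\item Alternatively, replace $\varinjlim W_n$ by a target that also detects multiplicative groups, such as Fontaine's Witt covectors.
\end{itemize}
With this definition, finite-level duality is essentially tautological for the \'etale/multiplicative pair and has real content only on the local-local part. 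For the supersingular, hence local-local, $p$-divisible groups used in the rest of the paper, your original functor would suffice. However, the statement concerns all Barsotti--Tate groups.

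\textbf{A smaller point.} In the local-local case, the five lemma does not by itself give full faithfulness by d\'evissage: you also need injectivity on $\mathrm{Ext}^1$. Nor does it give essential surjectivity, since module extensions must be realised by group extensions. The standard proofs instead use two ingredients:
\begin{itemize}
\item $\varinjlim W_n$ is injective and cogenerating on finite unipotent groups;
\item explicit groups such as $\ker(F^m\colon W_n\to W_n)$, whose Dieudonn\'e module $\mathcal{D}_k/(\mathcal{D}_kF^m+\mathcal{D}_kV^n)$ can be computed directly.
\end{itemize}
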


Now, let $X$ be a $g$-dimensional abelian variety over $k$. Recall that the $n$-torsion $X[n]$ of $X$ is defined to be the kernel of the multiplication-by-$n$ map and is a finite group scheme of rank $n^{2g}$. 
\begin{df}
    Let $X$ be an abelian variety over $k$. Then, the \emph{$p$-divisible group} associated to $X$ is defined to be the direct limit of group schemes
    \begin{align*}
        X[p^{\infty}] = \varinjlim X[p^n],
    \end{align*}
    where the transition maps are given by the inclusions $X[p^n]\hookrightarrow X[p^{n+1}]$.
\end{df}

As the $p$-divisible group associated to an abelian variety is a Barsotti-Tate group of height $2g$ (see \parencite[Example 2.1]{TatePDiv}), we may apply the Dieudonné correspondence to this group, motivating the following definition:

\begin{df}
    Let $X$ be an abelian variety over $k$. Its \emph{Dieudonné module} is defined to be $$M(X) = M(X[p^{\infty}]).$$
\end{df}

Similar to abelian varieties, we define invariants for Dieudonné modules: 

\begin{df}\label{df:anumber}
Let $M$ be a Dieudonné module of finite rank over $W(k)$.
\begin{itemize}
    \item[(i)] The \emph{$a$-number} of $M$ is defined as $a(M) = \dim_k(M/(F,V)M).$
    \item[(ii)] The \emph{genus} of $M$ is given by $g(M) = \frac{1}{2} \text{rank}_{W(k)}(M).$
\end{itemize}    
\end{df}

\begin{pro}\label{pro:anumber}
    Let $X$ be an abelian variety over $k$. 
    \begin{itemize}
        \item[(i)] The $a$-numbers of $X$ and its Dieudonné module coincide:
        \begin{align*}
            a(M(X)) = a(X).
        \end{align*}
        \item[(ii)] The dimension of $X$ agrees with the genus of its Dieudonné module:
        \begin{align*}
            g(M(X)) = \dim(X).
        \end{align*}
    \end{itemize}
\end{pro}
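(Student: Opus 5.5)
For part (ii), I would start from Theorem \ref{thm:DieudonneBarsottiTate}. The height of $X[p^{\infty}]$ equals the rank of $M(X)$ over $W(k)$. The height of $X[p^\infty]$ is $2g$ with $g=\dim X$, because $X[p]$ is a finite group scheme of rank $p^{2g}$ and the height $h$ is defined by $\operatorname{rank} X[p]=p^h$. Hence $\operatorname{rank}_{W(k)} M(X) = 2\dim X$, and dividing by two gives $g(M(X))=\dim X$. This part is essentially bookkeeping.

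For part (i), I would move to the $p$-torsion, which is where the real content sits.

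\textbf{Step 1: reduce to contravariant Dieudonné theory of finite group schemes.} The antiequivalence of Theorem \ref{thm:DieudonneBarsottiTate} is compatible with truncation: $M(X[p]) \cong M(X)/pM(X)$, with $F$ and $V$ induced. Since $\alpha_p$ is killed by $p$, every homomorphism $\alpha_p\to X$ factors through $X[p]$. Therefore
\[
\mathrm{Hom}(\alpha_p,X)=\mathrm{Hom}(\alpha_p,X[p]).
\]

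\textbf{Step 2: compute the Hom space via Dieudonné modules.} By antiequivalence,
\[
\mathrm{Hom}(\alpha_p,X[p]) \cong \mathrm{Hom}_{\mathcal{D}_k}\bigl(M(X[p]),M(\alpha_p)\bigr).
\]
Here $M(\alpha_p)=k$ with $F=V=0$. A $\mathcal{D}_k$-linear map $N\to k$ with $F=V=0$ on the target must kill $FN+VN$. Hence the Hom space is $\mathrm{Hom}_k\bigl(N/(F,V)N,\,k\bigr)$ for $N=M(X)/pM(X)$. Since $p=FV\in (F,V)M$, we have
\[
N/(F,V)N = M(X)/(F,V)M(X).
\]
Comparing dimensions then gives $a(X)=a(M(X))$.

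The main obstacle is Step 2's use of Dieudonné theory for finite (non-$p$-divisible) group schemes and its compatibility with truncation. Theorem \ref{thm:DieudonneBarsottiTate} only covers Barsotti--Tate groups, so I would cite \parencite{Demazure}, where finite commutative $p$-group schemes are treated and $M(G[p])=M(G)/p$ is proved. I would also check that the semilinearity does not distort $k$-dimensions. The maps $N\to k$ are $W(k)$-linear, so this is fine, and $k$ being perfect ensures $\sigma$ is bijective.
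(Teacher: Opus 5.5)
Your proposal is correct. Part (ii) is the paper's argument: the rank of $M(X)$ equals the height $2\dim X$ of $X[p^{\infty}]$.

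For part (i) the paper gives no argument of its own and only refers to \parencite[Chapter 2.5]{LiOort}. You therefore supply an actual proof where the paper has a citation. Your route is the standard one and works over any perfect $k$:
\begin{itemize}
\item factor $\alpha_p\to X$ through $X[p]$;
\item pass to Dieudonn\'e modules of finite group schemes;
\item use $M(\alpha_p)=k$ with $F=V=0$ to identify $\mathrm{Hom}_{\mathcal{D}_k}(N,k)$ with the $k$-dual of $N/(F,V)N=M(X)/(F,V)M(X)$.
\end{itemize}
Its cost is exactly the inputs you flag, which lie outside Theorem~\ref{thm:DieudonneBarsottiTate} as the paper states it. These are the antiequivalence for finite commutative $p$-group schemes and the truncation compatibility $M(X[p])\cong M(X)/pM(X)$, both available in \parencite{Demazure}.

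One step should be phrased more precisely: the comparison of $k$-structures. The relevant point is not that maps $N\to k$ are $W(k)$-linear. It is that the functor induces a ring isomorphism $k=\mathrm{End}(\alpha_p)\to\mathrm{End}_{\mathcal{D}_k}(M(\alpha_p))=k$, i.e.\ some field automorphism $\tau$ of $k$. Your bijection $\mathrm{Hom}(\alpha_p,X)\cong\mathrm{Hom}_k(N/(F,V)N,k)$ is then $\tau$-semilinear, and semilinear bijections preserve $k$-dimension. A merely additive bijection would not suffice for this.
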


\begin{proof}
    (ii) follows from Theorem \ref{thm:DieudonneBarsottiTate}. For (i), see \parencite[Chapter 2.5]{LiOort}.
\end{proof}

Having introduced these invariants, it is not surprising that many further notions related to abelian varieties translate nicely to the Dieudonné world:

\begin{df}
A Dieudonné module $M$ over $W(k)$ is called \emph{supersingular} (resp.\ \emph{superspecial}, \emph{supergeneral}) if there exists an abelian variety $X$ over $\overline{k}$ which is supersingular (resp.\ superspecial, supergeneral) such that
\begin{align*}
    M \otimes_{W(k)} W(\overline{k}) \cong M(X).
\end{align*}
\end{df}


In view of Theorem \ref{thm:DieudonneBarsottiTate}, we adopt the following definition of dual modules:

\begin{df}
    Let $M$ be a Dieudonné module that is a free $W(k)$-module. The \emph{dual Dieudonné module} $M^t$ associated to the primal module $M$ is defined to be
    \begin{align*}
        M^t = \text{Hom}_{W(k)}(M, W(k))
    \end{align*}
    considered as a $W(k)$-module. Furthermore for $f \in M^t$, we define
    \begin{align*}
        (Ff)(m) = f(\sigma(Vm)), \quad (Vf)(m) = f(\sigma^{-1}(Fm)) \quad \text{for all } m\in M.
    \end{align*}
\end{df}

\subsection{Quasi-polarisations}

From the categorical equivalence, it follows that a polarisation $\eta: X \to X^{\vee}$ on an abelian variety induces a morphism between the associated Dieudonné modules. Via the canonical isomorphism $M(X^{\vee}) \cong M(X)^t$, we obtain the monomorphism $M(\eta): M(X)^t \to M(X).$ Composing this map with the canonical pairing yields the following form:
\begin{df}
    Let $X$ be a polarised abelian variety over a perfect field $k$. We define the \emph{quasi-polarisation} of $X$ on $M(X)^t$ to be
    \begin{align*}
        \langle\cdot,\cdot\rangle: M(X)^t \times M(X)^t \to W(k), \quad
        (a,b) \mapsto b(M(\eta)(a)).
    \end{align*}    
\end{df}

The quasi-polarisation encodes the geometry of the polarisation in the language of Dieudonné theory. This manifests in the following semilinear algebraic properties as shown in \parencite[Proposition 3.24]{Oda}:

\begin{pro}\label{pro:quasipolarisationdual}
Let $X$ be a polarised abelian variety over a perfect field $k$. Then, the quasi-polarisation of $X$ on $M(X)^t$ is a non-degenerate alternating $W(k)$-bilinear form such that for all $x,y \in M(X)^t$
\begin{align*}
    \langle Fx, y \rangle = \sigma(\langle x, Vy\rangle).
\end{align*}
\end{pro}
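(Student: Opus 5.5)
The plan is to derive all three properties from the corresponding properties of the polarisation $\eta$ on $X$, transported through the antiequivalence of Theorem \ref{thm:DieudonneBarsottiTate} and the identification $M(X^{\vee})\cong M(X)^t$. Write $\phi = M(\eta)\colon M(X)^t\to M(X)$, so that $\langle a,b\rangle = b(\phi(a))$. Bilinearity over $W(k)$ is then immediate. The map $\phi$ is $W(k)$-linear, and evaluation $M(X)^t\times M(X)\to W(k)$ is bilinear.

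Non-degeneracy I would get from $\eta$ being an isogeny. Its restriction to $p$-divisible groups, $X[p^\infty]\to X^{\vee}[p^\infty]$, is then an isogeny of Barsotti–Tate groups of equal height $2g$. Under Dieudonné theory it corresponds to a $\mathcal{D}_k$-linear injection $\phi$ with finite-length cokernel, of length $\log_p$ of the order of $\ker\eta[p^\infty]$. Both modules are free of rank $2g$, so $\phi\otimes\mathbb{Q}$ is an isomorphism. Hence the pairing becomes perfect after inverting $p$, i.e.\ it is non-degenerate.

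For the compatibility with $F$ and $V$, I would first use that $\phi$ is a morphism of Dieudonné modules, so $\phi(Fx)=F\phi(x)$. I would then check that the natural evaluation pairing between $M$ and $M^t$ satisfies $(Fb)(m)=\sigma(b(Vm))$ and its $V$-analogue. These are exactly the formulas defining the $F$- and $V$-action on $M^t$, read in the correct variables. Combining them gives
\[
\langle Fx,y\rangle = y(F\phi(x)) = \sigma\bigl(({V}y)(\phi(x))\bigr)=\sigma(\langle x,Vy\rangle),
\]
where the middle step is the adjointness of $F$ on $M$ with $V$ on $M^t$, up to $\sigma$. This is formal once the conventions are fixed. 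Care is needed to match the stated definition of the dual action with the canonical isomorphism $M(X^\vee)\cong M(X)^t$ supplied by Theorem \ref{thm:DieudonneBarsottiTate}.

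The main obstacle is the alternating property. Over $W(k)$ with $p$ possibly equal to $2$, this must mean $\langle x,x\rangle=0$, not merely antisymmetry. I would first show antisymmetry. A polarisation is symmetric, $\eta^\vee=\eta$ under $X\cong X^{\vee\vee}$. Serre duality on $p$-divisible groups corresponds to $W(k)$-duals, and the biduality isomorphism introduces the standard sign in the Weil/Cartier pairing. So the symmetry of $\eta$ translates to $\langle a,b\rangle=-\langle b,a\rangle$. For the stronger statement I would use that, after a finite extension $k'$, $\eta=\varphi_{\mathcal L}$ comes from a line bundle. Forms of the shape $\varphi_{\mathcal L}$ give alternating $e$-pairings $e^{\mathcal L}_{p^n}$ on $X[p^n]$, by Mumford's theorem that $e^{\mathcal L}_n(x,x)=1$. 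Passing to the limit over $n$ and applying Dieudonné theory to the finite levels shows $\langle x,x\rangle=0$ on $M(X)^t\otimes W(k')$, and hence on $M(X)^t$. The delicate part is identifying the Dieudonné-side pairing with the limit of the $e^{\mathcal L}_{p^n}$. In practice I would cite Oda's comparison \parencite[Proposition 3.24]{Oda} for exactly this step.
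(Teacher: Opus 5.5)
Your proposal is correct. The paper gives no argument of its own here; it simply quotes \parencite[Proposition 3.24]{Oda}. Your route proves the formal parts directly and invokes Oda only for the comparison with the polarisation, which is sound and more informative.
\begin{itemize}
\item Bilinearity is immediate.
\item Non-degeneracy follows because $\phi=M(\eta)$ is injective with finite-length cokernel.
\item The identity $\langle Fx,y\rangle=\sigma(\langle x,Vy\rangle)$ is just the defining identity $y(Fm)=\sigma\bigl((Vy)(m)\bigr)$ of $V$ on $M^t$, applied to $m=\phi(x)$ after using $\phi F=F\phi$.
\end{itemize}
The paper's bare citation, by contrast, avoids having to fix the duality and sign conventions at all.

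You do, however, misplace the difficulty in the alternating property. The form is $W(k)$-valued, and $W(k)$ is a domain of characteristic $0$. So antisymmetry gives $2\langle x,x\rangle=0$ and hence $\langle x,x\rangle=0$, also for $p=2$. The distinction between skew and alternating only matters at finite level, for pairings with values in $\mathbb{Z}/2^n$. The detour through $\mathcal{L}$, Mumford's $e^{\mathcal{L}}_{p^n}(x,x)=1$ and a limit over $n$ is therefore unnecessary.

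The step that actually carries content is antisymmetry itself. Write $\phi^t\colon M(X)^t\to M(X)^{tt}=M(X)$ for the $W(k)$-transpose; antisymmetry is equivalent to $\phi^t=-\phi$. Deducing this from $\eta^\vee\circ\kappa_X=\eta$ requires two things:
\begin{itemize}
\item that $M(\kappa_X)$ differs from the $W(k)$-linear biduality by exactly the sign reflected in $e_n^{X^\vee}(y,\kappa_X x)=e_n^X(x,y)^{-1}$;
\item that the paper's explicit $F,V$ on $M^t$ match the canonical isomorphism $M(X^\vee)\cong M(X)^t$.
\end{itemize}
You only gesture at this with ``the standard sign''. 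It is here, not for the vanishing on the diagonal, that Oda's comparison should be invoked.
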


To extend to the primal Dieudonné module, we use the following lemma:

\begin{lem}\label{lem:embedding}
    Let $X$ be a polarised abelian variety over a perfect field $k$. Then, the polarisation induces
    \begin{align*}
        M(X) \xhookrightarrow{\iota} M(X)^t\otimes_{W(k)}\mathrm{Frac}(W(k)).
    \end{align*}
\end{lem}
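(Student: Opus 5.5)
The embedding comes from the polarisation $\eta: X \to X^{\vee}$, which is an isogeny. First, Theorem \ref{thm:DieudonneBarsottiTate} together with the canonical isomorphism $M(X^{\vee}) \cong M(X)^t$ turns $\eta$ into a $\mathcal{D}_k$-linear map $M(\eta): M(X)^t \to M(X)$. Second, this map becomes an isomorphism after inverting $p$. Since $\eta$ is an isogeny, its kernel $\ker\eta$ is a finite group scheme, so there is an isogeny $\eta': X^{\vee}\to X$ with $\eta'\circ\eta = [n]$ and $\eta\circ\eta' = [n]$ for $n = \deg \eta$. Applying the contravariant functor $M$, we get $M(\eta)\circ M(\eta') = n$ and $M(\eta')\circ M(\eta) = n$ on the respective modules.

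Write $n = p^r u$ with $p\nmid u$. Then $u$ is a unit in $W(k)$, so $n$ becomes invertible in $\mathrm{Frac}(W(k)) = W(k)[1/p]$. Both modules are free of rank $2g$ over $W(k)$ by Proposition \ref{pro:anumber}, so tensoring with $\mathrm{Frac}(W(k))$ is faithful on them. Hence $M(\eta)\otimes \mathrm{Frac}(W(k))$ is an isomorphism with inverse $n^{-1}\,M(\eta')\otimes\mathrm{Frac}(W(k))$.

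Define $\iota$ as the composite
\begin{align*}
M(X) \hookrightarrow M(X)\otimes_{W(k)}\mathrm{Frac}(W(k)) \xrightarrow{(M(\eta)\otimes 1)^{-1}} M(X)^t\otimes_{W(k)}\mathrm{Frac}(W(k)).
\end{align*}
The first map is injective because $M(X)$ is torsion-free, being free over $W(k)$, and the second map is an isomorphism. So $\iota$ is injective. Explicitly, $\iota(m) = n^{-1}M(\eta')(m)$, and $\iota$ maps the image of $M(X)^t$ identically onto $M(X)^t$, so $M(X)^t \subseteq \iota(M(X))$. Finally, $\iota$ commutes with $F$ and $V$ once these are extended semilinearly to the rational modules, since $M(\eta)$ is a morphism of Dieudonné modules.

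The only step requiring care is the existence of the quasi-inverse $\eta'$ and its compatibility with $M$. This is the standard fact that $\ker\eta \subseteq X[n]$ for $n=\deg\eta$, so $[n]$ factors through $\eta$. Passing to $p$-divisible groups, this factorisation is preserved by functoriality. Only the $p$-part matters, since the prime-to-$p$ part of $n$ already acts invertibly on $X[p^\infty]$.
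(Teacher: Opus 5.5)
Your proof is correct and follows the same route as the paper: $M(\eta)\colon M(X)^t\to M(X)$ becomes an isomorphism after inverting $p$, and $\iota$ is the natural inclusion $M(X)\hookrightarrow M(X)\otimes_{W(k)}\mathrm{Frac}(W(k))$ followed by the inverse of $M(\eta)\otimes 1$. The only difference is that you prove the rational-isomorphism step yourself, via the quasi-inverse isogeny $\eta'$ with $\eta'\circ\eta=[\deg\eta]$, whereas the paper cites Demazure for the fact that isogenous abelian varieties have Dieudonné modules that become isomorphic over $\mathrm{Frac}(W(k))$.
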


\begin{proof}
    Since the polarisation $\eta: X \to X^{\vee}$ is an isogeny of abelian varieties, the corresponding Dieudonné modules become isomorphic over $\text{Frac}(W(k))$ by \parencite[Chapter IV.1]{Demazure}. Via the natural inclusion of a module into its base extension, we obtain
    \begin{align*}
        M(X) \hookrightarrow M(X) \otimes_{W(k)} \text{Frac}(W(k)) &\cong M(X^{\vee})\otimes_{W(k)}\text{Frac}(W(k)) \\
        &\cong M(X)^t\otimes_{W(k)} \text{Frac}(W(k)). \qedhere
    \end{align*}
\end{proof}

This embedding enables us to extend the quasi-polarisation from the dual to the primal Dieudonné module at the cost of enlarging the codomain:

\begin{thm}\label{thm:QuasipolarisationFull}
    Let $X$ be a polarised abelian variety over a perfect field $k$. Then, the polarisation induces a non-degenerate alternating $W(k)$-bilinear form on $M(X)$ 
    \vspace{-4pt}
    \begin{align*}
        \langle\cdot, \cdot\rangle: M(X) \times M(X) \to \mathrm{Frac}(W(k)),
    \end{align*}
    also called the quasi-polarisation. The form is given by composing the embedding $\iota$ with the quasi-polarisation on the dual Dieudonné module. Moreover:
    \begin{itemize}
        \item[(i)] For all $x,y \in M(X)$, the following compatibility condition holds: 
        \begin{align*}
            \langle Fx,y\rangle = \sigma(\langle x, Vy\rangle).
        \end{align*}
        \item[(ii)] In the embedding $M(X)^t \subseteq M(X)$, the dual module can be identified:
        \begin{align*}
            x \in M(X)^t \iff \langle x, M(X)\rangle \subseteq W(k).
        \end{align*}
    \end{itemize}
\end{thm}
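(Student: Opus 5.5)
We define the form by transport of structure along Lemma \ref{lem:embedding} and then check each property by reducing it to Proposition \ref{pro:quasipolarisationdual}. Write $K = \mathrm{Frac}(W(k))$. The quasi-polarisation $\langle\cdot,\cdot\rangle$ on $M(X)^t$ is $W(k)$-bilinear, so it extends uniquely to a $K$-bilinear form on $M(X)^t\otimes_{W(k)} K$. This extension is still non-degenerate and alternating: nondegeneracy over $W(k)$ for free modules of finite rank means the Gram determinant is nonzero, and alternation is preserved by bilinear extension. We then set
\begin{align*}
\langle x, y\rangle := \langle \iota(x), \iota(y)\rangle \quad \text{for } x,y\in M(X).
\end{align*}
Since $\iota$ is injective and $W(k)$-linear with image a lattice spanning $M(X)^t\otimes K$, the form on $M(X)$ is $W(k)$-bilinear, alternating and non-degenerate.

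For (i), we first check that $\iota$ is compatible with $F$ and $V$. This should hold because $M(\eta)$ is a morphism of Dieudonné modules and the identification $M(X^\vee)\cong M(X)^t$ is one of Dieudonné modules (Theorem \ref{thm:DieudonneBarsottiTate}). We extend $F$ and $V$ $\sigma^{\pm1}$-semilinearly to $M\otimes K$, where they are still defined because $FV=p$ is invertible in $K$. The identity $\langle Fx,y\rangle=\sigma(\langle x,Vy\rangle)$ on $M(X)^t$ then extends to $M(X)^t\otimes K$ by semilinearity: writing $x=\sum a_i x_i$ and $y=\sum b_j y_j$ with $a_i,b_j\in K$, both sides pick up the factor $\sigma(a_i)\sigma(b_j)$, using $Vb=\sigma^{-1}(b)V$. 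Pulling back along $\iota$ gives (i).

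For (ii), we make the embedding explicit. Concretely $\iota = M(\eta)^{-1}\otimes K$ restricted to $M(X)$, so the dual module sits inside $M(X)$ as $M(\eta)(M(X)^t)$. Take $x=M(\eta)(a)$ and $y\in M(X)$. Unwinding the definitions gives $\langle x,y\rangle = \langle a, \iota(y)\rangle$. Write $\iota(y)=b/p^n$ with $b\in M(X)^t$, so that $M(\eta)(b)=p^n y$. Then $\langle a,b\rangle = b(M(\eta)(a))$, and the value on $y$ becomes the canonical pairing of $a$ against $y$, up to sign and the alternating swap; this lies in $W(k)$. Hence $x\in M(X)^t$ implies $\langle x,M(X)\rangle\subseteq W(k)$.

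For the converse, the pairing $M(X)\times M(X)\to K$ identifies $\{x : \langle x,M(X)\rangle\subseteq W(k)\}$ with $\mathrm{Hom}_{W(k)}(M(X),W(k))=M(X)^t$ inside $M(X)\otimes K$, by nondegeneracy. Since the computation above shows this identification agrees with $M(\eta)$, the two lattices coincide.

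I expect the main obstacle to be this bookkeeping in (ii): the direction of $M(\eta)$, the contravariance, and the identification of the canonical evaluation pairing with the extended form all have to be tracked carefully. In particular we must confirm that the canonical pairing $M(X)^t\times M(X)\to W(k)$ is exactly what $\langle\cdot,\cdot\rangle$ restricts to on $M(\eta)(M(X)^t)\times M(X)$. I would verify this by first checking it on a basis of $M(X)^t$ dual to a basis of $M(X)$.
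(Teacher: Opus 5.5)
Your proposal is correct. The construction of the form and part (i) follow the paper's route: you transport the form along $\iota$, use that $M(\eta)$ commutes with $F$ and $V$, and apply Proposition~\ref{pro:quasipolarisationdual}. There is one small slip. In the semilinear extension both sides pick up the factor $\sigma(a_i)b_j$, not $\sigma(a_i)\sigma(b_j)$. Also, $F$ and $V$ extend $\sigma^{\pm1}$-semilinearly to $M\otimes K$ whether or not $p$ is invertible.

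For (ii) your route genuinely differs from the paper's. The paper only cites Conrad's description $M^t\cong\{x\in K : xM\subseteq R\}$ of the dual of a fractional ideal. That is a rank-one statement, and as written it gives only an abstract isomorphism. It does not show that the specific lattice $\mathrm{im}\,M(\eta)$ is the dual lattice of $M(X)$ under the extended form, which is what (ii) asserts and what the later sections use.

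Your computation supplies exactly that link:
\[
\langle M(\eta)(a),y\rangle = p^{-n}\langle a,b\rangle = -p^{-n}\,a(M(\eta)(b)) = -a(y).
\]
It uses the alternating property of the form on $M(X)^t$ to swap arguments, and it identifies the restriction of the extended form to $\mathrm{im}\,M(\eta)\times M(X)$ with the evaluation pairing, up to sign. The converse then follows from nondegeneracy alone: $x$ and $M(\eta)\bigl(-\langle x,\cdot\rangle|_{M(X)}\bigr)$ pair identically with $M(X)$, so they are equal.

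The obstacle you anticipate in your last paragraph is therefore already settled by your own computation, and no basis check is needed. Your version costs a few lines of bookkeeping but is more self-contained and more precise than the paper's citation.
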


\begin{proof}
    The construction of the quasi-polarisation as well as the first property follow immediately from Lemma \ref{lem:embedding} and Proposition \ref{pro:quasipolarisationdual}.

    Property (ii) follows from a general result on dual modules shown in \parencite[Theorem 2.7]{Conrad}: For an integral domain $R$ with fraction field $K$ and a nonzero $R$-module $M$ in $K$, the dual is given as $M^t \cong \{x \in K \!\mid\! xM \!\subseteq \!R\}.$
\end{proof}

Motivated by the previous theorem, we introduce the notion of a Dieudonné module endowed with the additional structure of such a bilinear form:

\begin{df}
A \emph{quasi-polarised Dieudonné module} is a Dieudonné module $M$ equipped with a non-degenerate alternating $W(k)$-bilinear form
\begin{align*}
    \langle \cdot, \cdot \rangle: M \times M \to \text{Frac}(W(k))
\end{align*}
satisfying condition (i) for the Frobenius and Verschiebung maps.
\end{df}

We conclude this review by recalling a structure theorem for quasi-polarised superspecial Dieudonné modules first proven in \parencite[Chapter 6.1]{LiOort}.

\begin{thm}\label{thm:PreliminaryStructure}
    Let $M$ be a quasi-polarised superspecial Dieudonné module of genus $g$ over $W(k)$ such that $M \cong A_{1,1}^{\oplus g}$. Then, $M$ decomposes as
    \begin{align*}
        M \cong M_1 \oplus M_2 \oplus \ldots \oplus M_d,
    \end{align*}
    where $\langle M_i, M_j\rangle = 0$ for $i\neq j$, and each $M_i$ is of one of the following types:
    \begin{itemize}
        \item[(i)] a genus-$1$ quasi-polarised superspecial Dieudonné module over $W(k)$ generated by $x$ as a Dieudonné module such that
        \begin{align*}
            \langle x, Fx\rangle = p^r\epsilon
        \end{align*}
        for some $r\in\mathbb{Z}$ and $\epsilon \in W(k) \setminus pW(k)$ satisfying $\sigma(\epsilon) = -\epsilon$, or
        \item[(ii)] a genus-$2$ quasi-polarised superspecial Dieudonné module over $W(k)$ generated by $x,y$ as a Dieudonné module such that
        \begin{gather*}
            \langle x,y\rangle = p^r, \\
            \langle x,Fx\rangle = \langle y,Fy \rangle = \langle x, Fy\rangle = \langle y, Fx\rangle = 0
        \end{gather*}
        for some $r \in \mathbb{Z}.$
    \end{itemize}
\end{thm}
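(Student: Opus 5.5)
We will prove Theorem~\ref{thm:PreliminaryStructure} by induction on the genus $g$, splitting off at each step either a genus-$1$ or a genus-$2$ orthogonal summand. Since $M\cong A_{1,1}^{\oplus g}$, we have $F^2M = pM$ (equivalently $FM=VM$), so $N:=M/FM$ is a $g$-dimensional $k$-vector space. Moreover, any $W(k)$-submodule generated by elements $x_1,\dots,x_m$ together with $Fx_1,\dots,Fx_m$ is a Dieudonné submodule isomorphic to $A_{1,1}^{\oplus m}$ whenever the images of the $x_i$ in $N$ are linearly independent. The first step is to record the pairing on such generators. By Theorem~\ref{thm:QuasipolarisationFull}(i) and $F^2=pu$ with $u$ a unit on each $A_{1,1}$ factor (we normalise $F^2=-p$ or $F^2 = p$ via a $W(\mathbb{F}_{p^2})$-structure), the values $\langle x,y\rangle$, $\langle x,Fy\rangle$, $\langle Fx,Fy\rangle=p\,\sigma\langle x,y\rangle$ determine the whole form on the span. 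In particular, $\langle x,Fx\rangle=\sigma(\langle Fx, F^{-1}\cdot\rangle)$ computations give $\sigma(\langle x,Fx\rangle) = -\langle x,Fx\rangle$ up to the normalisation. This is exactly the condition on $\epsilon$ in type (i).

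Next, we consider the minimal valuation $r$ attained by the form on $M$, i.e.\ the least $r$ with $\langle M,M\rangle\subseteq p^rW(k)$ (using $\langle M,FM\rangle$ as well). Since $M=W\{x_i\}+FM$, the minimum is attained either on some pair $\langle x,Fy\rangle$ or on some pair $\langle x,y\rangle$ with $x,y$ lifting vectors of $N$. We then split into two cases.
\begin{itemize}
\item[(a)] The minimum is attained by $\langle x,Fx\rangle$ for some $x$. Then set $M_1=W x+WFx$; its form has unit determinant relative to $p^r$. Hence, by the standard orthogonal-complement argument for a unimodular (after scaling) sublattice, $M = M_1\oplus M_1^\perp$. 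We must check that $M_1^\perp$ is $F,V$-stable, which follows from compatibility (i) and the fact that $M_1$ itself is $F,V$-stable.
\item[(b)] Otherwise, the minimum is attained by $\langle x,y\rangle$ or $\langle x,Fy\rangle$ with $x\ne y$, while $\langle z,Fz\rangle$ is non-minimal for all $z$. Here a polarisation-type argument applied to $z=x+cy$, and the resulting $\sigma$-semilinear quadratic form $z\mapsto \langle z,Fz\rangle \bmod p^{r+1}$ on $N$, forces us to choose $x,y$ with $\langle x,y\rangle = p^r$ after rescaling. We then modify $x,y$ by Gram–Schmidt-type corrections (adding multiples of $Fx,Fy$ and of higher-valuation terms) to kill $\langle x,Fx\rangle,\langle y,Fy\rangle,\langle x,Fy\rangle,\langle y,Fx\rangle$, and split off $M_1=Wx+Wy+WFx+WFy$ as in (a).
\end{itemize}
The induction then proceeds on the complement, which is again superspecial of smaller genus.

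The main obstacle is the correction step in case (b), together with ensuring that the corrections in (a) and (b) converge. Killing the off-diagonal $F$-terms changes the other entries by terms of higher $p$-adic valuation, so one must run a successive-approximation argument. At each stage, we solve an equation of the form $\sigma(c)\alpha + c\beta = \gamma$ (or $c^{p}-c=\ldots$, an Artin–Schreier/Lang-type equation) over $k$; this is solvable precisely because $k$ is algebraically closed, or at least perfect with the needed roots, which is where $k=\overline{k}$ may be needed. The resulting sequence is $p$-adically Cauchy, and completeness of $W(k)$ gives the exact normal form.
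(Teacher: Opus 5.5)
The paper does not prove this statement; it quotes it from Li--Oort. So I assess your argument on its own terms. Your Jordan-splitting induction has the right overall shape, but it rests on a false claim. It is not true that $W(k)x_1+\dots+W(k)x_m+W(k)Fx_1+\dots+W(k)Fx_m$ is a Dieudonné submodule whenever the $x_i$ are independent in $M/FM$. Counterexample: in $A_{1,1}^{\oplus 2}$ with $Fe_i=Ve_i=f_i$ and $Ff_i=Vf_i=pe_i$, take $x=e_1+\underline{a}e_2$ with $a\in k\setminus\mathbb{F}_{p^2}$. Then neither $Vx=f_1+\sigma^{-1}(\underline a)f_2$ nor $F^2x=p(e_1+\sigma^2(\underline a)e_2)$ lies in $W(k)x+W(k)Fx$.

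Everything downstream uses this claim. Without it the block you split off need not be $F,V$-stable, so neither is its orthogonal complement, and the induction breaks. Likewise $\sigma\langle x,Fx\rangle=-\langle x,Fx\rangle$ only holds when $Vx=Fx$; in general you only get $\sigma\langle x,Vx\rangle=-\langle x,Fx\rangle$. Minimality in case (a) does not rescue you. Let $M$ be the orthogonal sum of a type-(ii) block on skeletal $x_0,y_0$ and a type-(i) block on a skeletal $w$, both of exponent $r$. Then $z=w+\underline a x_0+\underline b y_0$ with $a,b\notin\mathbb{F}_{p^2}$ has $\langle z,Fz\rangle=\langle w,Fw\rangle$ of minimal valuation, yet $F^2z\notin W(k)z+W(k)Fz$.

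The missing idea is to make every choice inside the skeleton $S=\{x\in M: Fx=Vx\}=\bigoplus_i\bigl(W(\mathbb{F}_{p^2})e_i\oplus W(\mathbb{F}_{p^2})f_i\bigr)$; this is what the paper later calls a ``skeletal basis''. On $S$ the following hold:
\begin{itemize}
\item $F^2=p$;
\item the form takes values in $W(\mathbb{F}_{p^2})[p^{-1}]$, since $p\sigma\langle x,y\rangle=\langle Fx,Fy\rangle=\langle Fx,Vy\rangle=p\sigma^{-1}\langle x,y\rangle$;
\item $\langle y,Fx\rangle=-\sigma\langle x,Fy\rangle$;
\item $W(k)x+W(k)Fx$ is $F,V$-stable for $x\in S$.
\end{itemize}
With minima taken over $\langle S,S\rangle$, your case split now works. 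The polarisation $z=x+cy$ can be done with $c\in W(\mathbb{F}_{p^2})$: if $\langle x,Fy\rangle=p^r\beta$, choose $\bar c$ with $\bar c^{p}\bar\beta\notin\mathbb{F}_p$, since the cross term is $p^r(u-u^p)$ modulo $p^{r+1}$ with $u=\bar c^p\bar\beta$.

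Two further steps need repair.
\begin{itemize}
\item In case (b) the block on $x,y,Fx,Fy$ is not $p^r$-modular: its Pfaffian has valuation $2r+1$ because $\langle Fx,Fy\rangle=p\sigma\langle x,y\rangle$. So ``as in (a)'' does not apply. Instead, use $\langle M,FM\rangle\subseteq p^{r+1}W(k)$ to see that each functional $\langle\cdot,m\rangle$ takes values in $p^rW(k)$ on $x,y$ and in $p^{r+1}W(k)$ on $Fx,Fy$. Then check that the Gram matrix, rescaled accordingly, is block-triangular modulo $p$ with invertible diagonal blocks.
\item The corrections must stay in $S$, so their solvability does not come from $k=\overline{k}$. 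For instance, with $\langle x,y\rangle=p^r$, killing $\langle x,Fx\rangle=p^{r+1}\alpha$ via $x\mapsto x+\lambda Fy$ requires $\bar\lambda^p-\bar\lambda=-\bar\alpha$ in $\mathbb{F}_{p^2}$. This is solvable because $\sigma(\alpha)=-\alpha$ forces $\mathrm{Tr}_{\mathbb{F}_{p^2}/\mathbb{F}_p}(\bar\alpha)=0$. Similarly, $x\mapsto x+\lambda'Fx$ removes $\langle x,Fy\rangle$ to first order, and the iteration converges $p$-adically. Alternatively, split first and only then renormalise the generators inside the fixed genus-$2$ Dieudonné submodule, where $W(k)$-coefficients are harmless.
\end{itemize}
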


\section{Polarised flag type quotients}\label{sec:PFTQ}
Aiming to describe the moduli space of supersingular abelian varieties more explicitly, the previous theorem provides a crucial first step by resolving the superspecial case: Assuming that the base field is algebraically closed, $X$ corresponds to a superspecial Dieudonné module with a quasi-polarisation, and we have established a structure theorem classifying such objects. 

Building on this, it is natural to ask whether supersingular abelian varieties can be linked to superspecial ones in a way that allows us to leverage this classification. This motivates the framework of flag type quotients. 

The key insight behind (polarised) flag type quotients is that any supersingular abelian variety can be connected to a superspecial abelian variety via a purely inseparable isogeny. As the kernel of this isogeny is constructed from successive extensions of the group scheme $\alpha_p$, the isogeny can be factored into a chain of isogenies, each having a predetermined kernel rank.

\subsection{PFTQs of abelian varieties}

Before stating the definition, we fix some notation assuming that $k$ is an algebraically closed field of characteristic $p$. It is well-known that for every $p$, there exists a supersingular elliptic curve over $\mathbb{F}_p$ whose Frobenius satisfies $F^2 +p =0$, see, for example \parencite[Chapter 1.2]{LiOort}; we denote such a curve by $E_0$. Furthermore, let $\eta$ be a polarisation on $E_0^g \otimes k$ with kernel
\begin{equation*}
    \text{ker}(\eta) = \text{ker}(F^{g-1}) \otimes k.
\label{eq:polarisation}
\tag{1}
\end{equation*}
Note that such a polarisation already exists for $\mathbb{F}_{p^2}$, see \parencite[Chapter 3.6]{LiOort}.

\begin{df}
    An \emph{$\alpha$-group} $G$ of $\alpha$-rank $r$ is a finite flat group scheme over an $\mathbb{F}_p$-scheme $S$ such that both Frobenius and Verschiebung vanish, i.e.\ $F_{G} = 0$ and $V_G = 0$. In particular, it is locally isomorphic to $\alpha_p^r \times S$. 
\end{df}

\begin{df}
    Let $g \geq 1$, $S$ be an $\mathbb{F}_{p^2}$-scheme, and $E_0$ the fixed supersingular elliptic curve. A \emph{flag type quotient} (FTQ) is a sequence of $g$-dimensional supersingular abelian schemes over $S$ of the form
    \begin{align*}
        Y_{g-1} \xrightarrow{\rho_{g-1}}Y_{g-2} \xrightarrow{\rho_{g-2}} \ldots \xrightarrow{\rho_2}Y_1 \xrightarrow{\rho_1}Y_0,
    \end{align*}
    satisfying the following conditions:
    \begin{itemize}
        \item[(i)] The top step is a superspecial abelian scheme $Y_{g-1} = E_0^g \times S$, and
        \item[(ii)] $\text{ker}(\rho_i)$ is an $\alpha$-group of $\alpha$-rank $i$ for all $1 \leq i \leq g-1$. 
    \end{itemize}
    Furthermore, we call it a \emph{polarised flag type quotient} (PFTQ) with respect to the polarisation $\eta$ if the following additional conditions are satisfied:
    \begin{itemize}
        \item[(iii)] There are polarisations $\eta_i$ on $Y_i$ for all $0 \leq i \leq g-1$ that make the following diagram commute:
        \begin{equation*}
            \begin{tikzcd}
                Y_{g-1} \arrow[r, "\rho_{g-1}"] \arrow[d, "\eta_{g-1}"] & Y_{g-2} \arrow[r, "\rho_{g-2}"] \arrow[d, "\eta_{g-2}"] & \cdots \arrow[r, "\rho_2"] & Y_1 \arrow[r, "\rho_1"] \arrow[d, "\eta_{1}"] & Y_0 \arrow[d, "\eta_0"] \\
                Y_{g-1}^{\vee} & Y_{g-2}^{\vee} \arrow[l, "\rho_{g-1}^{\vee}"] & \cdots \arrow[l, "\rho_{g-2}^{\vee}"] & Y_{1}^{\vee} \arrow[l, "\rho_2^{\vee}"] & Y_{0}^{\vee} \arrow[l, "\rho_1^{\vee}"]
            \end{tikzcd}
        \end{equation*}
        \item[(iv)] The top-step polarisation is given by $\eta_{g-1} = \eta \times \text{id}_S$.
        \item[(v)] The following descent condition holds for all $0 \leq i \leq g-1$:
        \begin{align*}
            \text{ker}(\eta_i) \subseteq Y_i[(F_{Y_i})^{i-j} \circ (V_{Y_i})^j] \quad \forall \ 0 \leq j \leq \left\lfloor \tfrac{i}{2} \right\rfloor.
        \end{align*}
    \end{itemize}
    Moreover, a (polarised) flag type quotient is called \emph{rigid} if additionally
    \begin{itemize}
        \item[(vi)] the following rigidity condition holds for all $1 \leq i \leq g-1$:
        \begin{align*}
        \text{ker}(Y_{g-1} \to Y_{i}) = \text{ker}(Y_{g-1}\to Y_0) \cap Y_{g-1}[F^{g-1-i}].
        \end{align*}
    \end{itemize}
\end{df}

It was shown in \parencite{LiOort} that rigid PFTQs can be used to describe the moduli space of principally polarised supersingular abelian varieties. We briefly recall the two main theoretical theorems formalising this connection:

\begin{lem}
    Let $\eta$ be a polarisation satisfying the kernel requirement (\ref{eq:polarisation}). Then, the functor
    \begin{align*}
        \mathfrak{p}_{g,\eta}: \textbf{Sch}_k&\to \textbf{Set} \\
        S &\mapsto \{\text{PFTQs over $S$ with respect to $\eta$}\}\big/\text{isomorphism}
    \end{align*}
    is represented by a projective scheme $\mathcal{P}_{g, \eta}$ over $k$. Similarly, the functor associated to rigid PFTQs is represented by a quasi-projective scheme $\mathcal{P}_{g, \eta}'$ over $k$.
\end{lem}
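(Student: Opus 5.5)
Representability is proved by realising $\mathcal{P}_{g,\eta}$ as a closed subscheme of an iterated Quot/Grassmannian construction and $\mathcal{P}'_{g,\eta}$ as an open subscheme of it. The key observation is that all data of a PFTQ are determined by the top step $Y_{g-1}=E_0^g\times S$. Each $\rho_i$ is an isogeny with kernel an $\alpha$-group, so the composite $Y_{g-1}\to Y_i$ has kernel a finite flat subgroup scheme of $Y_{g-1}$. This kernel is killed by a fixed power of $F$, namely $F^{g-1}$. Hence an FTQ over $S$ amounts to a flag of finite flat subgroup schemes
\[
0=K_{g-1}\subseteq K_{g-2}\subseteq \cdots \subseteq K_0 \subseteq E_0^g[F^{g-1}]\times S,
\]
with $K_{i-1}/K_i$ an $\alpha$-group of rank $i$. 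Then $Y_i=Y_{g-1}/K_i$, and isomorphisms of PFTQs are the identity on the top, so the flag determines the PFTQ up to unique isomorphism.

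First step. Fix the finite group scheme $N=E_0^g[F^{g-1}]\otimes k$, of some fixed order. Finite flat subgroup schemes of $N\times S$ of fixed order are represented by a closed subscheme of the Quot/Grassmannian scheme of quotients of the finite-dimensional Hopf algebra $\mathcal{O}(N)$. The conditions of being an ideal, a Hopf ideal and flat are all closed, so this Hilbert scheme of subgroups is projective. Taking the product over $i$ and imposing the inclusions $K_i\subseteq K_{i-1}$ gives closed conditions. The requirement that $F$ and $V$ vanish on $K_{i-1}/K_i$ is equivalent to $F(K_{i-1})\subseteq K_i^{(p)}$ and $V(K_{i-1}^{(p)})\subseteq K_i$, which are again closed. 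Fixing orders fixes the $\alpha$-ranks. This yields a projective scheme representing FTQs.

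Second step: polarisations. Since $\eta_{g-1}=\eta\times\mathrm{id}_S$ is fixed, condition (iii) forces $\eta_i$ to be uniquely determined, if it exists, by $\rho^\vee\eta_i\rho=\eta$ for $\rho: Y_{g-1}\to Y_i$. Existence of $\eta_i$ is equivalent to $K_i$ being isotropic for the Weil-type pairing on $\ker(\eta)=N$, that is, $K_i\subseteq K_i^{\perp}$ (the standard descent of polarisations along isogenies, using $\ker\eta\supseteq K_i$). Isotropy is a closed condition. The descent condition (v) reads $K_i^\perp/K_i\subseteq (Y_{g-1}/K_i)[F^{i-j}V^j]$, which translates to inclusions among subgroup schemes of $N$ and so is closed. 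Hence PFTQs form a closed subscheme $\mathcal{P}_{g,\eta}$, which is projective.

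Rigidity (vi) states that the natural inclusion $K_i\subseteq K_0\cap N[F^{g-1-i}]$ is an equality. Since both sides are subgroups and the left side is flat of fixed order, equality is equivalent to the intersection having order at most that of $K_i$. This is an upper semicontinuity condition on fibre ranks, so it is open, and $\mathcal{P}'_{g,\eta}$ is quasi-projective.

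The main obstacle is the polarisation step. One must check that the existence of $\eta_i$ really is a closed condition in families: isotropy of $K_i$ must be shown to suffice for descent to a polarisation, not just to a symmetric homomorphism. This relies on the standard result that a symmetric isogeny factoring through an isotropic quotient remains a polarisation, which is fibrewise ampleness and hence open and closed on connected $S$. I would also verify that the intersection in (vi) is handled correctly even though it need not be flat. For the intersection I would try to work with the rank of the quotient $\mathcal{O}(K_0)/(\ldots)$ as a coherent sheaf, and I would expect semicontinuity to give the desired openness.
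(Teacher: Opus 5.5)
The paper gives no proof of this lemma. It is imported from Li--Oort, and what the paper relies on afterwards is only the tower $\mathcal{V}_0\to\cdots\to\mathcal{V}_{g-1}$: one passes from level $i$ to level $i-1$ by choosing a subspace of $M_i/FM_i$ (or of $M_i/(F,V)M_i$) and imposing the descent criterion.

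Your argument is correct, and it is a genuinely different, global route. Every $K_i=\ker(Y_{g-1}\to Y_i)$ has a filtration with $g-1-i$ subquotients killed by $F$, so all the data live in the single finite group scheme $N=E_0^g[F^{g-1}]\otimes k=\ker\eta$. The whole moduli problem therefore becomes a closed subscheme of a product of Grassmannians of the fixed algebra $\mathcal{O}(N)$. The truncated spaces $\mathcal{V}_m$ and the truncation morphisms then come for free, by forgetting the lower part of the flag.

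What you gain is that you never need universal families over the intermediate $\mathcal{V}_i$. A level-by-level construction has to cope with $Y_i[F]\cap Y_i[V]$ not being flat over $\mathcal{V}_i$: its order is $p^{a(Y_i)}$ and the $a$-number jumps. That is exactly why the paper switches to the constant-dimensional $M_i/FM_i$ in the non-supergeneral computations. What the tower buys instead is a direct description of the fibres of $\mathcal{V}_{i-1}\to\mathcal{V}_i$ inside Grassmannians, which is what the paper actually computes.

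The points you flagged are all resolvable.
\begin{itemize}
\item Flatness is built into the Grassmannian; it is not a condition you impose.
\item The polarisation step is not an obstacle. If $\mu$ with $\rho^\vee\mu\rho=\eta$ exists, it is symmetric by uniqueness. On a geometric fibre write $\mu=\varphi_L$; then $\varphi_{\rho^*L}=\rho^\vee\varphi_L\rho=\eta$, so $\rho^*L$ is ample, hence $L$ is ample because $\rho$ is finite surjective. No connectedness of $S$ is needed.
\item For (v) to be a closed condition you need a finite locally free source. This holds: $K_i^\perp=\ker(N_S\to K_i^D)\cong(N_S/K_i)^D$. Using $V^jF^{i-j}=F^{i-2j}p^j$, condition (v) says that $F^{i-2j}p^j\colon K_i^\perp\to N_S^{(p^{i-2j})}$ factors through $K_i^{(p^{i-2j})}$.
\item For (vi), put $Z=K_0\cap N_S[F^{g-1-i}]$ and $\mathcal{I}=\ker(\mathcal{O}_Z\to\mathcal{O}_{K_i})$. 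Intersections commute with base change and $\mathcal{O}_{K_i}$ is flat, so $\mathcal{I}\otimes\mathcal{O}_T$ is the corresponding kernel over any $T$. By Nakayama the rigid locus is the open complement of $\mathrm{Supp}(\mathcal{I})$. This is your semicontinuity argument made functorial.
\end{itemize}
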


For later computations, it is convenient to consider partial flag structures. To this end, we define \emph{$m$-truncated PFTQs} $$\{(Y_i, \eta_i)_{(m \leq i \leq g-1)};{\rho_i}_{(m+1 \leq i \leq g-1)}\}$$ as sequences of abelian schemes satisfying the above PFTQ conditions within the restricted index range. As a corollary of the previous lemma, $m$-truncated PFTQs also admit fine moduli spaces, denoted by $\mathcal{V}_m$, which fit into a natural tower of so-called \emph{truncation morphisms}:
\begin{align*}
    \mathcal{P}_{g, \eta} = \mathcal{V}_0 \xrightarrow{} \mathcal{V}_1 \xrightarrow{} \ldots \xrightarrow{} \mathcal{V}_{g-2} \xrightarrow{} \mathcal{V}_{g-1} = \{\text{one point}\}.
\end{align*}

Crucially, the moduli spaces of abelian varieties and PFTQs are closely linked as it can be shown that every abelian variety is at the end of at least one and at most finitely many rigid PFTQs. This observation is formalised in the following proposition: 

\begin{pro}\label{pro:surjectivequasifiniteMorphism}
    Let $k = \overline{\mathbb{F}}_p$, and let $\Lambda$ be a set of representatives of equivalence classes of polarisations satisfying the kernel requirement (\ref{eq:polarisation}). Then, the morphism
    \begin{align*}
        \coprod_{\eta\in\Lambda} \mathcal{P}_{g,\eta}' \to \mathcal{S}_g \otimes \overline{\mathbb{F}}_p
    \end{align*}
    projecting a PFTQ to its last member is surjective and quasi-finite.
\end{pro}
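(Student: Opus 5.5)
We prove the two halves of Proposition \ref{pro:surjectivequasifiniteMorphism} separately: surjectivity, that every principally polarised supersingular abelian variety $(X,\lambda)$ over $k=\overline{\mathbb{F}}_p$ is the last member $Y_0$ of some rigid PFTQ, and quasi-finiteness, that only finitely many rigid PFTQs end in a fixed $(X,\lambda)$. Throughout we translate into Dieudonné modules via Theorem \ref{thm:DieudonneBarsottiTate}. Isogenies with $\alpha$-group kernels then correspond to inclusions of Dieudonné lattices $N\subseteq N'$ with $FN'+VN'\subseteq N$, and polarisations correspond to quasi-polarisations as in Theorem \ref{thm:QuasipolarisationFull}.

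\emph{Surjectivity.} Let $M=M(X)$. Inside $M\otimes\mathrm{Frac}(W(k))$ we form the lattice $S^0M$, the smallest superspecial Dieudonné module containing $M$ that is stable under $F^{-1}V$. Since $X$ is supersingular, $F^{g-1}S^0M\subseteq M\subseteq S^0M$; this is the classical fact that a supersingular module of genus $g$ is squeezed between $S^0M$ and $F^{g-1}S^0M$. We then define the flag
\begin{align*}
M_i = M + F^{g-1-i}S^0M, \qquad 0\le i\le g-1 .
\end{align*}
Then $M_{g-1}=S^0M$ is superspecial and $M_0=M$. The requirement that $M_i/M_{i-1}$ be killed by $F$ and $V$ and have dimension $i$ is exactly the rigidity condition (vi) read at the Dieudonné level. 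The dimension count is the delicate step: we expect to need the choice of a minimal isogeny, possibly after modifying the flag at the top, and to compute $a$-numbers as in Proposition \ref{pro:anumber}.

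Next, $S^0M$ corresponds to a superspecial abelian variety $Y_{g-1}\cong E_0^g$, using the Deligne--Ogus--Shioda remark. The polarisation $\lambda$ pulls back along the composite isogeny to $\eta_{g-1}$. We must check that $\ker\eta_{g-1}=\ker F^{g-1}$, that is, that $(S^0M)^t=F^{g-1}S^0M$ with respect to the extended form, using Theorem \ref{thm:QuasipolarisationFull}(ii). The polarisations $\eta_i$ on the intermediate $Y_i$ are induced by restricting the quasi-polarisation to $M_i$. The descent condition (v) becomes the integrality statement
\begin{align*}
\langle F^{i-j}V^j M_i, M_i\rangle\subseteq W(k),
\end{align*}
which follows from the explicit form of $M_i$ together with the compatibility $\langle Fx,y\rangle=\sigma\langle x,Vy\rangle$. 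Finally, up to isomorphism $\eta_{g-1}$ equals one of the representatives $\eta\in\Lambda$, which gives a point of $\mathcal{P}'_{g,\eta}$ mapping to $(X,\lambda)$.

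\emph{Quasi-finiteness.} Fix $(X,\lambda)$. Any rigid PFTQ ending in $X$ has top module $N$ with $F^{g-1}N\subseteq M\subseteq N$, since the total kernel lies in $Y_{g-1}[F^{g-1}]$. Rigidity then forces $M_i=M+F^{g-1-i}N$, so the PFTQ is determined by $N$. Superspecial lattices $N$ between $M$ and $F^{1-g}M$ are Dieudonné submodules of a fixed finite-length module, but a priori they come in families. Their self-duality under the quasi-polarisation and superspeciality ($F N=VN$) cut them out as points of a scheme defined over $\mathbb{F}_{p^2}$-structures. We would show this scheme is finite by noting that the set of such $N$ is stable under a finite-type group action with finite stabilisers, or more directly by invoking Li--Oort's result that the fibres are finite.

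We expect the main obstacle to be the exact dimension statement $\dim M_i/M_{i-1}=i$ in the surjectivity part. Everything else is routine bookkeeping with the form.
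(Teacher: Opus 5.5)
The paper gives no proof of this proposition; it is quoted from Li--Oort. Judged on its own, your proposal has a genuine gap in each half.

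\emph{Surjectivity.} Taking $M_{g-1}=S^0M$ as the top is the wrong choice. Let $N\supseteq M$ be any admissible top, so $N^t=F^{g-1}N$. Dualising $M\subseteq N$ gives $F^{g-1}N=N^t\subseteq M^t=M$. Since $N/F^{g-1}N$ has length $g(g-1)$, duality then forces $N/M$ to have length exactly $g(g-1)/2$. By contrast, $S^0M/M$ can be much shorter, even zero.

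Here is a concrete example. For $g=3$, let $N=\langle e_i,f_i\rangle$ be as in Theorem~\ref{thm:Structure}(i) and set $M:=FN=\langle f_i,pe_i\rangle$. By Corollary~\ref{cor:Structure}, $\langle pe_i,f_i\rangle=1$, so $M$ is a principally quasi-polarised superspecial module.
\begin{itemize}
\item $S^0M=M$ is itself principally quasi-polarised, so condition (i) of Definition~\ref{df:PFTQDieudonne} fails.
\item Even the correctly polarised top $F^{-1}M=N$ fails: $M+FN=M$, so the rigid flag has jumps $3,0$ instead of $2,1$.
\end{itemize}
Rigid PFTQs ending in this $M$ do exist, but with a different top that is not canonically attached to $M$.

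What surjectivity really requires is this: for every principally quasi-polarised supersingular $M$, find a superspecial $N\supseteq M$ with $N^t=F^{g-1}N$ whose rigid flag $M+F^{g-1-i}N$ has jumps exactly $i$. That is the whole content of the statement, and your proposal leaves it open. Everything else is indeed routine once such an $N$ is given. Using $VN=FN$ one gets $(F^mN)^t=F^{g-1-m}N$, hence $M_i^t=M\cap F^iN$, and
\begin{align*}
F^{i-j}V^jM_i\subseteq F^{i-j}V^jM+F^{g-1}N\subseteq M\cap F^iN=M_i^t
\end{align*}
for all $j$. So the descent conditions are bookkeeping, but the dimension condition is not.

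\emph{Quasi-finiteness.} Your reduction is correct. The total kernel lies in $Y_{g-1}[F^{g-1}]$, and rigidity gives $M_i=M+F^{g-1-i}N$. The remaining ambiguity, identifying $N$ with the fixed top, is controlled by the finite group $\mathrm{Aut}(E_0^g\otimes k,\eta)$.

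However, you never prove that there are only finitely many possible $N$. Invoking Li--Oort's finiteness is circular. A finite-type group action with finite stabilisers does not yield finitely many points either.

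The missing idea is that superspecial lattices are exactly the $W(k)$-lattices stable under $\tau=F^{-1}V$. This is a $\sigma^{-2}$-semilinear bijection of the isocrystal; in the basis of Theorem~\ref{thm:Structure} it acts coordinatewise as $\pm\sigma^{-2}$. By Lang's theorem, the $\tau$-stable lattices are precisely the $W(k)$-spans of $W(\mathbb{F}_{p^2})$-lattices in the $\mathrm{Frac}(W(\mathbb{F}_{p^2}))$-form consisting of $\tau$-fixed vectors.

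Write $S_0(L)$ for the largest superspecial sublattice of $L$. Every admissible top satisfies
\[
S^0M\subseteq N\subseteq S_0(F^{1-g}M),
\]
and both bounds are $\tau$-stable. So $N$ corresponds to a $W(\mathbb{F}_{p^2})$-submodule of the finite module $S_0(F^{1-g}M)^\tau/(S^0M)^\tau$. Hence there are only finitely many such $N$, which is the finiteness you need.
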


\subsection{PFTQs of Dieudonné modules}
To explicitly work with PFTQs of abelian varieties, it is helpful to reformulate their definition in the language of Dieudonné theory which leads to the following notion: 

\begin{df}\label{df:PFTQDieudonne}
A \emph{polarised flag type quotient of Dieudonné modules} of genus $g$ over $W(k)$ is a filtration
\begin{align*}
    M_{g-1} \supseteq M_{g-2} \supseteq \ldots \supseteq M_1 \supseteq M_0
\end{align*}
of quasi-polarised supersingular Dieudonné modules of genus $g$ satisfying the following properties:
\begin{itemize}
    \item[(i)] $M_{g-1} \cong A_{1,1}^{\oplus g}$ with $M_{g-1}^t = F^{g-1}M_{g-1}$,
    \item[(ii)] $(F,V)M_i \subseteq M_{i-1}$ and $\text{dim}_k(M_i/M_{i-1}) = i$ for all $1\leq i \leq g-1$,
    \item[(iii)] $F^{i-j}V^{j}M_i \subseteq M_i^t$ for all $0 \leq j \leq \left\lfloor \tfrac{i}{2} \right\rfloor$. 
\end{itemize}
Moreover, we call the PFTQ \emph{rigid} if the following additional condition holds:
\begin{itemize}
    \item[(iv)] $M_i = M_0 + F^{g-1-i}M_{g-1}$ for all $1\leq i \leq g-1$.  
\end{itemize}
\end{df}

\begin{rem}
    Note that $A_{1,1} = \mathcal{D}_k/\mathcal{D}_k(F-V)$ is the Dieudonné module of the fixed elliptic curve $E_0$ after base extension to a field containing $\mathbb{F}_{p^4}$.
\end{rem}

Clearly, PFTQs of abelian varieties and of Dieudonné modules are closely linked, as formalised by the following correspondence in \parencite[Lemma 7.4]{LiOort}:

\begin{pro}
The Dieudonné correspondence gives rise to a bijection
\begin{align*}
    \{\text{PFTQs of abelian varieties with respect to $\eta$ over $k$}\} \big/ \text{isomorphisms} \\
    \overset{1:1}{\longleftrightarrow}\{\text{PFTQs of Dieudonné modules of genus $g$ over $W(k)$}\},
\end{align*}
in which rigid PFTQs of abelian varieties correspond to rigid PFTQs of Dieudonné modules.
\end{pro}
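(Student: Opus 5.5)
Since this bijection is \parencite[Lemma 7.4]{LiOort}, the plan is to reconstruct its proof by translating each PFTQ axiom through the contravariant Dieudonné functor $M$ of Theorem \ref{thm:DieudonneBarsottiTate}. Given a PFTQ $Y_{g-1}\to\cdots\to Y_0$ over $k$, apply $M$ to the isogenies $\rho_i$. Contravariance turns each $\rho_i$ into an injection $M(Y_{i-1})\hookrightarrow M(Y_i)$ of finite cokernel, so all modules embed into $N=M(Y_{g-1})\otimes\mathrm{Frac}(W(k))$. Setting $M_i$ to be the image of $M(Y_i)$ gives the filtration $M_{g-1}\supseteq\cdots\supseteq M_0$. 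Because of the commutative diagram in (iii), every $\eta_i$ induces the \emph{same} quasi-polarisation on $N$, namely the one of Theorem \ref{thm:QuasipolarisationFull} coming from $\eta_{g-1}=\eta$. Isomorphic PFTQs give equal filtrations inside $N$, so the map is well defined.

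Next, check each axiom of Definition \ref{df:PFTQDieudonne}. For (i), $Y_{g-1}=E_0^g$ with $M(E_0)=A_{1,1}$ after base change. The condition $\ker\eta=\ker F^{g-1}$ translates into $M_{g-1}^t=F^{g-1}M_{g-1}$ via Theorem \ref{thm:QuasipolarisationFull}(ii), since $M(\ker\phi)\cong M(Y)/M(Y')$ for an isogeny $\phi$. For (ii), $\ker\rho_i$ is an $\alpha$-group of rank $i$. This means $M_i/M_{i-1}$ is a $k$-vector space of dimension $i$ (length matches rank) killed by $F$ and $V$, i.e.\ $(F,V)M_i\subseteq M_{i-1}$. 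For (iii), $\ker\eta_i\subseteq\ker(F^{i-j}V^j)$ holds iff $\eta_i$ factors through $F^{i-j}V^j$. Dually, this says $F^{i-j}V^jM(Y_i)\subseteq M(Y_i^\vee)=M_i^t$, where $M_i^t$ is identified inside $N$ by Theorem \ref{thm:QuasipolarisationFull}(ii). The rigidity condition (vi) becomes (iv) by the same dictionary: kernels of composite isogenies correspond to quotients of lattices, and intersections of kernels correspond to sums of lattices.

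For the inverse, start from a filtration of lattices in $N$ satisfying (i)–(iii). Each inclusion $M_{i-1}\subseteq M_i$ with quotient killed by $F,V$ determines a finite subgroup scheme $G_i\subseteq Y_i$. This uses the correspondence between finite group schemes and Dieudonné modules of finite length, and it is where $k$ perfect (here algebraically closed) is needed. Define $Y_{i-1}=Y_i/G_i$ inductively from $Y_{g-1}=E_0^g$; the quotient is again an abelian variety with Dieudonné module $M_{i-1}$. It remains to descend the polarisation. A quasi-polarisation that is integral in the sense of Theorem \ref{thm:QuasipolarisationFull}(ii), i.e.\ $M_i^t\subseteq M_i$, means that $\ker\rho$ is isotropic for the Weil pairing of $\eta_{i+1}$. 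Then $\eta_{i+1}$ descends to a polarisation $\eta_i$ with the commutative square; ampleness is preserved because a descended polarisation is still symmetric and positive. The inclusion $M_i^t\subseteq M_i$ follows from (iii) with $j=0$ applied to $F^iM_i\subseteq M_i^t$ together with duality. Fully faithfulness of $M$ shows the two constructions are mutually inverse up to isomorphism.

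The main obstacle is the polarisation descent step: showing that the lattice condition (iii) is equivalent to the existence of polarisations $\eta_i$ satisfying both the diagram in (iii) and the kernel condition (v). This requires a careful identification of Weil-pairing isotropy with integrality of $\langle\cdot,\cdot\rangle$ on the lattices, and here I would follow Oda's dictionary \parencite[Proposition 3.24]{Oda}.
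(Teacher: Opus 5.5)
The paper gives no argument for this proposition beyond citing \parencite[Lemma 7.4]{LiOort}, so your reconstruction has to stand on its own. Your dictionary for (i), (ii) and rigidity is fine. The lattice inclusion you write for (iii) is also correct, though your wording is reversed: $\ker\eta_i\subseteq\ker(F^{i-j}V^j)$ means that $F^{i-j}V^j$ factors through $\eta_i$, and that is what gives $F^{i-j}V^jM_i\subseteq\operatorname{im}M(\eta_i)=M_i^t$.

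The genuine gap is in the inverse direction. To descend $\eta_{g-1}$ to $Y_i$ you need $M_i^t\subseteq M_i$, and you claim this follows from (iii) with $j=0$ ``together with duality''. It does not. Condition (iii) only places certain sublattices inside $M_i^t$, so it bounds $M_i^t$ from below. Dualising $F^iM_i\subseteq M_i^t$ just returns $M_i\subseteq(F^iM_i)^t=\{x : V^ix\in M_i^t\}$, which is the same condition with $V$ in place of $F$. No levelwise manipulation of (iii) produces the reverse containment. Your ``main obstacle'' is therefore misplaced: (iii)$\leftrightarrow$(v) is the easy translation, while the existence of the $\eta_i$ is a global statement.

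The missing idea is to use the bottom of the flag together with a length count.
\begin{itemize}
\item At $i=0$, (iii) reads $M_0\subseteq M_0^t$.
\item From (i) and $\ell(M_{g-1}/FM_{g-1})=g$ you get $\ell(M_{g-1}/M_{g-1}^t)=\ell(M_{g-1}/F^{g-1}M_{g-1})=g(g-1)$.
\item Condition (ii) gives $F^{g-1}M_{g-1}\subseteq M_0$ and $\ell(M_{g-1}/M_0)=\sum_{i=1}^{g-1}i=g(g-1)/2$. Hence $\ell(M_0/M_{g-1}^t)=g(g-1)/2$.
\item Dualising $M_0\subseteq M_{g-1}$ gives $M_{g-1}^t\subseteq M_0^t$ with $\ell(M_0^t/M_{g-1}^t)=\ell(M_{g-1}/M_0)=g(g-1)/2$.
\item So $M_0\subseteq M_0^t$ have the same colength over $M_{g-1}^t$, forcing $M_0=M_0^t$.
\item Then $M_i^t\subseteq M_0^t=M_0\subseteq M_i$ for every $i$.
\end{itemize}
The same count shows that $\eta_0$ is principal.

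With this in hand the descent is straightforward and needs no Weil pairings. Let $\eta_i$ be the quasi-isogeny with $\rho^\vee\eta_i\rho=\eta_{i+1}$.
\begin{itemize}
\item It is integral on the $\ell$-adic Tate modules for $\ell\neq p$, because $\rho$ has $p$-power degree.
\item It is integral on the $p$-divisible group precisely because $M_i^t\subseteq M_i$.
\item It is symmetric, by cancelling isogenies.
\item It is ample, since the pull-back along the finite surjective $\rho$ of the corresponding line bundle is ample.
\end{itemize}
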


In summary, Proposition \ref{pro:surjectivequasifiniteMorphism} shows that polarised flag type quotients provide a useful framework for studying the moduli space of principally polarised supersingular abelian varieties. In this section, we have seen that we can also pass to Dieudonné modules and analyse their PFTQs.

To finish this section, we collect some facts about PFTQs of Dieudonné modules that are helpful for later calculations:

\begin{lem}\label{lem:PFTQLemma}
Let $M_{g-1} \supseteq \ldots \supseteq M_0$ be a PFTQ of Dieudonné modules of genus $g$ over $W(k)$. Then, the following statements hold for all $i$:
\begin{itemize}
    \item[(i)] Each successive quotient $M_i/M_{i-1}$ is a $k$-vector space.
    \item[(ii)] The genus can be recovered from the relation $\dim_k(M_i/FM_i) = g.$ 
\end{itemize}
\end{lem}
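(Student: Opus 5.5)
For (i), the plan is to show that both $F$ and $V$ send $M_i$ into $M_{i-1}$, and then deduce that $p$ kills the quotient. Condition (ii) of Definition \ref{df:PFTQDieudonne} gives $(F,V)M_i\subseteq M_{i-1}$. The relation $FV=p$ in $\mathcal{D}_k$ then yields
\[
pM_i = FVM_i \subseteq FM_i \subseteq M_{i-1}.
\]
So $M_i/M_{i-1}$ is a $W(k)$-module annihilated by $p$, that is, a module over $W(k)/pW(k)=k$, which is a $k$-vector space. Its dimension is $i$ by condition (ii), consistent with the $\dim_k$ written there. I expect no difficulty in this part.

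For (ii), the plan is to compute $\dim_k(M_i/FM_i)$ by comparing lengths inside $M_i$. Each $M_i$ is a Dieudonné module that is free of rank $2g$ over $W(k)$: it is a finite-index $W(k)$-submodule of $M_{g-1}\cong A_{1,1}^{\oplus g}$, which is free of rank $2g$, and $W(k)$ is a DVR. Over $\mathrm{Frac}(W(k))$ the map $F$ is injective, because the supersingular isocrystal $N=M_{g-1}\otimes\mathrm{Frac}(W(k))$ has all slopes $1/2$; concretely, $FV=p$ is invertible on $N$. Hence the chain $pM_i\subseteq FM_i\subseteq M_i$ holds, and length is additive along it:
\[
\mathrm{length}_{W(k)}(M_i/FM_i)+\mathrm{length}_{W(k)}(M_i/VM_i) = \mathrm{length}_{W(k)}(M_i/pM_i) = 2g.
\]
Here I use that $F(M_i/VM_i)$ and $M_i/VM_i$ have the same length up to the $\sigma$-twist, since $F$ is $\sigma$-semilinear and injective. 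More precisely, $FM_i/pM_i = F(M_i)/F(VM_i)\cong \sigma^*(M_i/VM_i)$, and $\sigma$-twisting preserves length. Since $F$ kills $M_i/FM_i$ trivially and $p$ annihilates it, the quotient $M_i/FM_i$ is a $k$-vector space and its length equals $\dim_k$.

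It therefore remains to show that $\dim_k(M_i/FM_i)=\dim_k(M_i/VM_i)$. This is the main obstacle, and I would handle it with the slope/determinant (Hodge–Newton) argument. Choose a $W(k)$-basis of $M_i$. The determinant of $F$, as a $\sigma$-linear map, has valuation $\mathrm{length}(M_i/FM_i)$. This valuation is an isogeny invariant of the isocrystal $N$, equal to the sum of the slopes, namely $2g\cdot\tfrac12=g$. Hence $\dim_k(M_i/FM_i)=g$.

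An alternative route for this last step uses the polarisation. The alternating form identifies $M_i^t$ with a lattice for which $F$ and $V$ are swapped by Theorem \ref{thm:QuasipolarisationFull}(i); this gives $\dim(M_i/FM_i)=\dim(M_i^t/VM_i^t)$, and one would then combine this with the first approach. I would use the slope argument as the primary route. A sanity check is $M_{g-1}=A_{1,1}^{\oplus g}$, where $F=V$ and $M_{g-1}/FM_{g-1}$ visibly has dimension $g$; I would also note that the valuation of $\det F$ is invariant under passing to finite-index sublattices such as $M_i\subseteq M_{g-1}$.
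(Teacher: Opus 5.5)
Your proof is correct. Part (i) is the paper's argument: $p=FV$ kills $M_i/M_{i-1}$ because $(F,V)M_i\subseteq M_{i-1}$.

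For (ii) you take a genuinely different route. The paper argues geometrically. It views $M_i$ as the Dieudonné module of $Y_i[p^\infty]$ and invokes a corollary of Dieudonné theory (citing Fontaine): $M_i/FM_i$ is dual to the tangent space at the identity, which has dimension $\dim Y_i=g$.

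You stay inside the isocrystal $M_{g-1}\otimes\mathrm{Frac}(W(k))$. Since $\sigma$ is bijective on $W(k)$, $FM_i$ is the column span of the matrix $A$ of $F$ in a basis of $M_i$. Hence $\mathrm{length}(M_i/FM_i)=v(\det A)$, with $v$ the $p$-adic valuation. A change of lattice replaces $A$ by $P^{-1}A\sigma(P)$, which does not change this valuation. Evaluating on $M_{g-1}$, where $Fe_i=f_i$ and $Ff_i=\pm pe_i$, gives $g$.

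This determinant computation alone already finishes (ii). You do not need the identity $\dim_k(M_i/FM_i)+\dim_k(M_i/VM_i)=2g$ or the ``main obstacle'' $\dim_k(M_i/FM_i)=\dim_k(M_i/VM_i)$. These instead come out as by-products, together with $\dim_k(M_i/VM_i)=g$. Likewise, the explicit evaluation on $M_{g-1}$ makes the appeal to slopes, to Dieudonné--Manin, and to the polarisation unnecessary.

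What each route buys: yours is elementary and self-contained semilinear algebra. It holds for any $F$-stable $W(k)$-lattice commensurable with $M_{g-1}$. It uses neither the fact that $M_i$ comes from an abelian variety nor the convention-sensitive identification of $M/FM$ with a cotangent space. The paper's argument is shorter, but it rests on that external geometric input.
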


\begin{proof}
    For (i), it follows from property (ii) in the definition of PFTQs of Dieudonné modules that $p = F \circ V$ acts as zero on $M_i/M_{i-1}$. 
    This implies that $M_i/M_{i-1}$ is a module over $W(k)/(p) \cong k$, and thus a $k$-vector space.

    For (ii), $M(G)/FM(G)$ is isomorphic to the dual of the tangent space of $G$ at the identity by a corollary of the Dieudonné Theorem \ref{thm:DieudonneBarsottiTate}, see also \parencite[Proposition 4.4]{Fontaine}.
    As the dimension of the cotangent space is equal to the dimension of the $p$-divisible group, the stated dimension formula follows.
\end{proof}

\clearpage

\section{Structure theorems}\label{sec:StructureTHM}

In low-dimensional examples, one naturally computes PFTQs by working downwards from the top level. The structure theorems in this section formalise this principle: We first fix the top level of a PFTQ by a normal form and then provide an easily verifiable descent criterion for passing to the next lower level. Together, these results provide a computational framework for calculations of PFTQs and suffice to describe the first and last step.

Throughout, we assume that $k$ is algebraically closed and contains $\mathbb{F}_p$.

\subsection{Normal form at top level of PFTQ}
The additional constraints imposed by the PFTQ structure allow us to prove a refined version of the structure theorem for quasi-polarised superspecial Dieudonné modules:

\begin{thm}\label{thm:Structure}
Let $M_{g-1}$ be the quasi-polarised superspecial Dieudonné module at the top of a PFTQ.
\begin{itemize}
    \item[(i)] If $g$ is odd, there is a basis $e_1, f_1, \ldots, e_g,f_g$ of $M_{g-1}$ as a free $W(k)$-module such that 
    \begin{align*}
        Fe_i = f_i, \quad Ve_i = -f_i,\quad Ff_i = -pe_i,\quad Vf_i = pe_i,
    \end{align*}
    and the quasi-polarisation $M_{g-1}^t \times M_{g-1}^t \to W(k)$ with respect to the dual basis is given by the block diagonal matrix
    \begin{align*}
    \begin{pmatrix}
        0 & \phantom{-} p^{\frac{g-1}{2}} \\ -p^{\frac{g-1}{2}} & 0
    \end{pmatrix}^{\bigoplus g}.
    \end{align*}
    \item[(ii)] If $g$ is even, there is a basis $e_1, f_1, \ldots, e_g,f_g$ of $M_{g-1}$ as a free $W(k)$-module such that
    \begin{align*}
        Fe_i = f_i,\quad Ve_i = f_i,\quad Ff_i = pe_i,\quad Vf_i = pe_i,
    \end{align*}
    and the quasi-polarisation $M_{g-1}^t \times M_{g-1}^t \to W(k)$ with respect to the dual basis is given by the block diagonal matrix
    {\begin{align*}\begin{pNiceMatrix}[c, margin=2pt]
    \Block{2-2}<\Large>{0} 
    & & \phantom{-}p^{\frac{g}{2}} & 0 \\
    & & 0 & \phantom{-}p^{\frac{g}{2}-1} \\
    -p^{\frac{g}{2}} & 0 & \Block{2-2}<\Large>{0}&  \\
    0 & -p^{\frac{g}{2}-1} &  & \\
    \CodeAfter
    \begin{tikzpicture}
        \draw[line width = 0.2pt, dashed] (3-|1) -- (3-|5) ;
        \draw[line width = 0.2pt, dashed] (1-|3) |- (5-|3) ;
    \end{tikzpicture}   
    \end{pNiceMatrix}^{\bigoplus \frac{g}{2}}.
    \end{align*}}
\end{itemize}
\end{thm}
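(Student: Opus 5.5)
The plan is to start from the Li--Oort decomposition of Theorem \ref{thm:PreliminaryStructure}. Since $M_{g-1}\cong A_{1,1}^{\oplus g}$, it gives an orthogonal splitting $M_{g-1}=M_1\oplus\cdots\oplus M_d$ into blocks of type (i) (genus $1$) and type (ii) (genus $2$). I will then impose condition (i) of Definition \ref{df:PFTQDieudonne}, namely $M_{g-1}^t=F^{g-1}M_{g-1}$, to pin down the block types and the exponents $r$. Finally I will normalise bases inside each block and invert the Gram matrix to read off the form on the dual.

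The first step is to localise the duality condition to blocks. Because the blocks are mutually orthogonal, Theorem \ref{thm:QuasipolarisationFull}(ii) gives $M_{g-1}^t=\bigoplus_i M_i^t$, where $M_i^t$ is the dual of $M_i$ inside $M_i\otimes \mathrm{Frac}(W(k))$. Since $F$ preserves each block, $F^{g-1}M_{g-1}=\bigoplus_i F^{g-1}M_i$. Hence the condition $M_i^t=F^{g-1}M_i$ must hold for every $i$.

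On $A_{1,1}$ we have $F^2=FV=p$. So $F^{g-1}M_i=p^{(g-1)/2}M_i$ when $g$ is odd, and $F^{g-1}M_i=p^{g/2-1}FM_i$ when $g$ is even. I then compute $M_i^t$ for each block type.

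\emph{Type (i).} Take the $W(k)$-basis $x,Fx$. The pairing is $\langle x,Fx\rangle=p^r\epsilon$ with $\epsilon$ a unit, and $\langle x,x\rangle=\langle Fx,Fx\rangle=0$ because the form is alternating. Hence $M_i^t=p^{-r}M_i$.

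\emph{Type (ii).} Take the basis $x,y,Fx,Fy$. By the compatibility $\langle Fx,y\rangle=\sigma(\langle x,Vy\rangle)$ we get $\langle Fx,Fy\rangle=\sigma(\langle x,VFy\rangle)=p^{r+1}$, while all mixed pairings vanish. Hence $M_i^t=\langle p^{-r}x,p^{-r}y,p^{-r-1}Fx,p^{-r-1}Fy\rangle=p^{-r-1}FM_i$.

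Since $FM_i$ is never a power of $p$ times $M_i$ (the quotient $M_i/FM_i$ has dimension equal to the genus, not $0$ or twice the genus), comparing lattices forces the following. If $g$ is odd, all blocks are of type (i) with $r=-\tfrac{g-1}{2}$. If $g$ is even, all blocks are of type (ii) with $r=-\tfrac g2$.

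Next comes the normalisation. In the even case I want a generator $x$ with $Fx=Vx$. I then set $e_1=x$, $e_2=y$, $f_1=Fx$, $f_2=Fy$, which gives $Fe=f=Ve$ and $Ff=F^2e=pe=Vf$. In the odd case I rescale $e=\delta x$ with $\sigma^2(\delta)=-\delta$. Such a $\delta$ is a unit, obtained as a Teichmüller lift of an element of $\mathbb F_{p^4}$. This rescaling makes $Ve=-Fe$; I then put $f=Fe$, so that $Ff=F^2e=-pe$ and $Vf=pe$. I also want the unit in $\langle e,Fe\rangle=\delta\sigma(\delta)p^r\epsilon$ to become $\pm1$. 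For this I will use that $k$ is algebraically closed to solve the relevant norm-type equation in units of $W(k)$, successively modulo $p^n$ via Lang's theorem.

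The last step is a direct computation: inverting the Gram matrices on $M_i$ gives the stated matrices on the dual basis. The odd case yields the $2\times 2$ blocks with $p^{\pm(g-1)/2}$. In the even case the entries $p^{-r}=p^{g/2}$ and $p^{-r-1}=p^{g/2-1}$ appear, after ordering the basis as $e_1,e_2,f_1,f_2$.

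The main obstacle is this normalisation step. The generators in Theorem \ref{thm:PreliminaryStructure} are not a priori of the form $Fx=\pm Vx$. I must simultaneously rescale (or modify) $x$ and $y$ so that they become eigen-generators for $F^{-1}V$ and the pairing units become $1$, without destroying the orthogonality relations. In the type (i) case there is an additional constraint to handle: the conditions $\sigma(\epsilon)=-\epsilon$ and $\sigma^2(\delta)=-\delta$ must be compatible. I would first try to reduce this to solving $\sigma$-semilinear equations over $W(k)$ via successive approximation, using that $k$ is algebraically closed.
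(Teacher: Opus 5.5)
Your first two steps are correct and are the paper's argument in cleaner form. Orthogonality gives $M_{g-1}^t=\bigoplus_i M_i^t$, and matching $p^{-r}M_i$ resp.\ $p^{-r-1}FM_i$ against $F^{g-1}M_i$ is exactly the paper's comparison of $\mathrm{im}(F^{g-1})$ with $\mathrm{im}(M(\eta_{g-1}))$. Your $r$ is the exponent of the form on $M_{g-1}$ itself, hence the negative of the paper's. One correction: $F^2=p$ is false as an operator on $A_{1,1}$, since $F^2(wx)=\sigma^2(w)\,px$ for the standard generator $x$ and $w\in W(k)$. What holds, and what your comparison actually uses, is the lattice identity $F^2M_i=F(VM_i)=pM_i$, which comes from $FM_i=VM_i$. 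Pointwise, $F^2x=px$ holds exactly when $Fx=Vx$, and that is where your proof stops.

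The genuine gap is the normalisation, which you flag but do not carry out. The relations of Theorem \ref{thm:PreliminaryStructure} as quoted do not supply it. Replacing $(x,y)$ by $(wx,w^{-1}y)$ with $w\in W(k)^\times$ and $\sigma^2(w)\neq w$ preserves every listed relation and the block, but destroys $Fx=Vx$, so $Ve_i=\pm f_i$ cannot be read off. Repairing a genus-$2$ block by successive approximation, while keeping $\langle x,y\rangle=p^r$ and the four vanishing $F$-pairings, amounts to redoing Li--Oort's classification.

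The missing idea is the skeleton. Since $FM=VM$, the map $\Phi=F^{-1}V$ is a $\sigma^{-2}$-linear automorphism of $M$ that commutes with $F$ and satisfies $\langle\Phi a,\Phi b\rangle=\sigma^{-2}(\langle a,b\rangle)$. The hypothesis $M\cong A_{1,1}^{\oplus g}$ means that $S=\{x\in M: Fx=Vx\}$ is a $W(\mathbb{F}_{p^2})$-lattice spanning $M$, on which the form is $W(\mathbb{F}_{p^2})[p^{-1}]$-valued. Li--Oort carry out their decomposition inside $S$, so $x,y$ may be taken skeletal from the start. This is what the paper's proof uses when it calls its basis skeletal, even though the quoted version of Theorem \ref{thm:PreliminaryStructure} omits it. With skeletal generators the even case is immediate, taking $f_i=Fe_i$.

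In the odd case your compatibility worry then dissolves. For skeletal $x$ one has $\langle x,Fx\rangle=-\langle Fx,x\rangle=-\sigma(\langle x,Vx\rangle)=-\sigma(\langle x,Fx\rangle)$, so the unit $\epsilon$ satisfies $\sigma(\epsilon)=-\epsilon$ automatically. If $\sigma^2(\delta)=-\delta$, then $\delta\sigma(\delta)\epsilon$ is $\sigma$-invariant, i.e.\ it lies in $\mathbb{Z}_p^\times$. Replacing $\delta$ by $a\delta$ with $a\in W(\mathbb{F}_{p^2})^\times$ multiplies it by the norm $a\sigma(a)$, which ranges over all of $\mathbb{Z}_p^\times$. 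So $\delta\sigma(\delta)\epsilon=1$ is solvable in $W(\mathbb{F}_{p^4})$ with no Lang-type approximation. The paper encodes the same step by prescribing $\epsilon=\lambda\sigma(\lambda)$ in advance and rescaling $e_i'=\lambda e_i$, $f_i'=\sigma(\lambda)f_i$.

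Two small points. For $p=2$ no Teichm\"uller lift satisfies $\sigma^2(\delta)=-\delta$; take $\delta$ from Hilbert 90 for $W(\mathbb{F}_{16})$ over $W(\mathbb{F}_4)$ instead. Also, the displayed even-genus matrix refers to the ordering $e_1,f_1,e_2,f_2$, not $e_1,e_2,f_1,f_2$.
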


\begin{proof}
To avoid confusion with the Dieudonné modules appearing in the decomposition, we denote $M_{g-1}$ by $M$ throughout this proof.

The Dieudonné correspondence applied to a polarisation $\eta: X \to X^{\vee}$ implies $M(X)^t = \text{im}(M(\eta))$. Therefore, property (i) in the definition of a PFTQ can be reformulated as \begin{align*}\text{im}(F^{g-1}) = \text{im}(M(\eta_{g-1})).\tag{i'}\end{align*}
Since $M$ is superspecial, Theorem \ref{thm:PreliminaryStructure} yields a decomposition $$M = M_1 \oplus \ldots \oplus M_d,$$ where each $M_i$ is generated by either one element $x_i$ or by two elements $x_i,y_i$ as a Dieudonné module. Relabelling the resulting $g$ generators as $\{e_i\}_{i=1}^g$ and defining $f_i := Fe_i$, the set $\{e_i, f_i\}_{i = 1}^g$ forms a $W(k)$-basis of $M$.

With these two preparations, we begin the proof of the structure theorem for even genus. To deduce the claimed statement from the properties established in Theorem \ref{thm:PreliminaryStructure}, we verify the following two claims:
\begin{itemize}
    \item[(1)] There is no genus-$1$ component in the decomposition of $M$, and
    \item[(2)] the free parameter is given by $r = \frac{g}{2}$.
\end{itemize}

For (1), we assume that there is a genus-$1$ component, and without loss of generality, it is the first one. We show that this contradicts the reformulated property (i') stated above. To this end, we firstly calculate $\text{im}(F^{g-1})$ using the following recursive relations:
\begin{align*}
    F^{g-1}(e_i) &= F^{g-2}(f_i) = F^{g-3}(pe_i) = pF^{g-3}(e_i) = \ldots = p^{\frac{g}{2}-1}f_i, \\
    F^{g-1}(f_i) &= F^{g-2}(pe_i) = pF^{g-2}(e_i) = pF^{g-3}(f_i) = \ldots = p^{\frac{g}{2}}e_i.
\end{align*} 
It follows that $\text{im}(F^{g-1}) = \langle p^{\frac{g}{2}}e_1, p^{\frac{g}{2}-1}f_1, \ldots, p^{\frac{g}{2}}e_g, p^{\frac{g}{2}-1}f_g\rangle$.

\noindent Next, consider $\text{im}(M(\eta_{g-1}))$, calculating the contribution of the genus-$1$ component: 
\begin{align*}
    M(\eta_{g-1})(e_1^*) &= -p^r\epsilon f_1, \\
    M(\eta_{g-1})(f_1^*) &= p^r\epsilon e_1.
\end{align*}
As the quasi-polarisation is of block-diagonal shape, there are no further possibilities to obtain $e_1$ and $f_1$ in the image of $M(\eta_{g-1})$. The reformulated property (i') implies that $\text{im}(F^{g-1}) \cap \langle e_1, f_1 \rangle = \text{im}(M(\eta_{g-1}))\cap \langle e_1, f_1\rangle$, and hence we obtain
\begin{align*}
    \langle p^{\frac{g}{2}}e_1, p^{\frac{g}{2}-1}f_1\rangle = \langle p^r\epsilon e_1, p^r\epsilon f_1\rangle. 
\end{align*}
However, this cannot hold for any $r \in \mathbb{Z}$ as $p$ is not invertible in the Witt vectors. Therefore, only genus-$2$ components appear in the decomposition of $M$, proving claim (1).
\vspace{4pt}

For (2), we proceed as before, now knowing that all components are of genus $2$. Since the image of $F^{g-1}$ is independent of the quasi-polarisation, we can use the result calculated above. Calculating the image of the polarisation, we obtain for the first genus-$2$ block that
\begin{alignat*}{4}
    M(\eta_{g-1})(e_1^*) &= -p^re_2, &\quad\quad M(\eta_{g-1})(e_2^*) &= p^re_1,\\
    M(\eta_{g-1})(f_1^*) &= -p^{r-1}f_2, &\quad\quad M(\eta_{g-1})(f_2^*) &= p^{r-1}f_1,
\end{alignat*}
and therefore $\text{im}(M(\eta_{g-1})) = \langle p^re_1, p^{r-1}f_1, \ldots, p^re_g, p^{r-1}f_g\rangle$. 
Comparing images, equality in the reformulated property (i') holds for $r = \frac{g}{2}$.
\vspace{8pt}

We now assume that $g$ is odd. With the same calculations as in the even genus case,  namely comparing $\text{im}(F^{g-1})$ and $\text{im}(M(\eta_{g-1}))$, one verifies that
\begin{itemize}
    \item[(1)] there is no genus-$2$ component in the decomposition of $M$, and
    \item[(2)] the free parameter is given by $r = \frac{g-1}{2}$.
\end{itemize}
\vspace{2pt}

However, this is only an intermediate step, as Theorem \ref{thm:PreliminaryStructure} with these two properties gives a skeletal basis $\{e_i, f_i\}$ such that the quasi-polarisation on $M^t$ with respect to the dual basis is given by
{\small\begin{align*}
    \begin{pmatrix} 0 & p^{\frac{g-1}{2}}\epsilon \\ p^{\frac{g-1}{2}}\sigma(\epsilon) & 0 \end{pmatrix}^{\oplus g}.
\end{align*}}

To obtain the claimed form, we perform a base change. For this, recall that there exists an element $\lambda \in W(\mathbb{F}_{p^4}) \setminus pW(\mathbb{F}_{p^4})$ such that $\sigma^2(\lambda) = -\lambda$. Defining $\epsilon = \lambda \cdot \sigma(\lambda)$, we obtain an element satisfying $\sigma(\epsilon) = -\epsilon$. In the proof of Theorem \ref{thm:PreliminaryStructure}, a slightly stronger statement is shown, namely that $\epsilon$ can be fixed in advance provided it satisfies this Frobenius relation. We use this freedom to fix our choice of $\epsilon$ to be the one defined above.

We now perform a base change by rescaling the skeletal basis as follows:
\begin{align*}
    e_i \mapsto \lambda e_i =:e_i', \quad\quad f_i \mapsto \sigma(\lambda)f_i =:f_i'.
\end{align*}

\noindent It is straightforward to verify that this basis satisfies the claimed relations:
\begin{alignat*}{4}
    Fe_i' &= \sigma(\lambda)Fe_i = \sigma(\lambda)f_i = f_i', &\quad
    Ff_i' &= \sigma^2(\lambda)Ff_i = - \lambda pe_i = -pe_i', \\
    Ve_i' &= \sigma^{-1}(\lambda)Ve_i = -\sigma(\lambda)f_i = -f_i', &\quad
    Vf_i' &= \sigma^{-1}(\sigma(\lambda))Vf_i = \lambda pe_i = pe_i'.
\end{alignat*}

\noindent As the dual basis transforms as $(e_i')^* = \frac{1}{\lambda}e_i^*$ and $(f_i')^* = \frac{1}{\sigma(\lambda)}f_i^*$, it holds that
\begin{align*}
    \langle (e_i')^*,(f_i')^*\rangle =  \frac{1}{\lambda\sigma(\lambda)}\langle e_i^*, f_i^*\rangle = \frac{1}{\lambda\sigma(\lambda)} p^{\frac{g-1}{2}}\epsilon = \frac{\lambda\sigma(\lambda)}{\lambda\sigma(\lambda)}p^{\frac{g-1}{2}} = p^{\frac{g-1}{2}}.
\end{align*}

\noindent In summary, this base change transforms the skeletal basis into one satisfying the wanted relations and giving the claimed matrix form, completing the proof of the odd genus case.
\end{proof}

This quasi-polarisation can be extended to the full Dieudonné module by applying the general construction described before Theorem \ref{thm:QuasipolarisationFull}:
\begin{cor}\label{cor:Structure}
Let $M_{g-1}$ be the quasi-polarised superspecial Dieudonné module at the top of a PFTQ. 
\begin{itemize}
    \item[(i)] If $g$ is odd, then the quasi-polarisation $M_{g-1} \times M_{g-1} \to W(k)[p^{-1}]$ with respect to the basis $\{e_1, f_1, \ldots, e_g, f_g\}$ of the previous theorem is given by
    $$\begin{pmatrix}
        0 & \phantom{-} p^{-\frac{g-1}{2}} \\
        -p^{-\frac{g-1}{2}} & 0
    \end{pmatrix}^{\bigoplus g}.$$
    
    \item[(ii)] If $g$ is even, then the quasi-polarisation $M_{g-1}\times M_{g-1} \to W(k)[p^{-1}]$ with respect to the basis $\{e_1, f_1, \ldots, e_g, f_g\}$ of the previous theorem is given by
    $$\begin{pNiceMatrix}[c, margin=2pt]
    \Block{2-2}<\Large>{0} 
    & & \phantom{-}p^{-\frac{g}{2}} & 0 \\
    & & 0 & \phantom{-}p^{-\frac{g}{2}+1} \\
    -p^{-\frac{g}{2}} & 0 & \Block{2-2}<\Large>{0}&  \\
    0 & -p^{-\frac{g}{2}+1} &  & \\
    \CodeAfter
    \begin{tikzpicture}
        \draw[line width = 0.2pt, dashed] (3-|1) -- (3-|5) ;
        \draw[line width = 0.2pt, dashed] (1-|3) |- (5-|3) ;
    \end{tikzpicture}
    \end{pNiceMatrix}^{\bigoplus \frac{g}{2}}.$$
\end{itemize}
\end{cor}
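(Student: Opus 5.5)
The plan is to transport the form on $M^t$ from Theorem \ref{thm:Structure} to $M = M_{g-1}$ through the embedding $\iota$ of Lemma \ref{lem:embedding}. Concretely, I will write the quasi-polarisation on $M^t$ (with respect to the dual basis $\{e_i^*, f_i^*\}$) as a Gram matrix $A$. I will also write the map $M(\eta_{g-1}) : M^t \to M$ in the bases $\{e_i^*,f_i^*\}$ and $\{e_i,f_i\}$. By construction, $\langle a,b\rangle = b(M(\eta_{g-1})(a))$, so the matrix of $M(\eta_{g-1})$ is $A$ itself, up to transpose and sign. This is exactly what the proof of Theorem \ref{thm:Structure} used implicitly, e.g.\ $M(\eta_{g-1})(e_1^*) = -p^r e_2$. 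Over $\mathrm{Frac}(W(k))$ the map becomes invertible, and $\iota$ is its inverse. The extended form on $M$ is
\[
\langle x,y\rangle_M := \langle \iota x, \iota y\rangle_{M^t}.
\]

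In matrix terms, let $B$ be the matrix of $\iota$, so $B = \pm (A^{T})^{-1}$. Then the Gram matrix on $M$ is $B^T A B$, which equals $A^{-1}$ up to sign and transposition conventions. The key step is therefore to invert the block matrices, which is elementary since each $2\times 2$ (odd case) or $4\times 4$ (even case) block is antidiagonal, resp.\ block-antidiagonal with diagonal blocks.

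For $g$ odd, the block is
\[
\begin{pmatrix} 0 & c \\ -c & 0 \end{pmatrix}, \qquad c = p^{\frac{g-1}{2}},
\]
and its inverse is
\[
\begin{pmatrix} 0 & -c^{-1} \\ c^{-1} & 0 \end{pmatrix}.
\]
One must check that the sign comes out as claimed, i.e.\ with $+p^{-\frac{g-1}{2}}$ in the upper right. To do this I will verify directly on basis vectors rather than through the formula. From the definition, $\iota(e_1) = p^{-c'}\cdot(\text{the dual vector mapped to } e_1)$. In the odd case, $M(\eta)(f_1^*) = c\,e_1$ up to sign, so $\iota(e_1) = \pm c^{-1} f_1^*$ and $\iota(f_1) = \mp c^{-1} e_1^*$. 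Then
\[
\langle e_1, f_1\rangle = \langle \pm c^{-1} f_1^*, \mp c^{-1} e_1^*\rangle = c^{-2}\langle e_1^*, f_1^*\rangle = c^{-1},
\]
which gives the sign.

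For $g$ even, the same computation applies to each $4\times4$ block. Using $M(\eta)(e_2^*) = p^{g/2}e_1$, $M(\eta)(e_1^*) = -p^{g/2}e_2$, and the analogous $f$-relations with exponent $g/2-1$, we get $\iota(e_1) = p^{-g/2}e_2^*$, $\iota(e_2) = -p^{-g/2}e_1^*$, and similarly for the $f$'s. Hence
\[
\langle e_1,e_2\rangle = -p^{-g}\langle e_2^*, e_1^*\rangle = p^{-g}p^{g/2} = p^{-g/2},
\]
and likewise $\langle f_1,f_2\rangle = p^{-g/2+1}$, with all other pairings vanishing by block-diagonality.

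The main obstacle is purely bookkeeping: fixing a consistent convention for how $M(\eta)$ relates to the Gram matrix (transpose and sign), so that the final signs match the statement. As a sanity check I will use Theorem \ref{thm:QuasipolarisationFull}(ii). With the resulting form, $x\in M$ pairs integrally with all of $M$ exactly when $x$ lies in the span of $p^{g/2}e_i, p^{g/2-1}f_i$ (even case), resp.\ $p^{\frac{g-1}{2}}e_i, p^{\frac{g-1}{2}}f_i$ (odd case). This is $F^{g-1}M = M^t$, as computed in the proof of Theorem \ref{thm:Structure}, and it confirms the exponents independently of sign conventions.
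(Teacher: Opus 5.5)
Your proposal is correct and is essentially the paper's argument. The paper also inverts $M(\eta_{g-1})$ over $W(k)[p^{-1}]$, writing $e_i = M(\eta_{g-1})(f_i^*)\otimes p^{-\frac{g-1}{2}}$ and $f_i = M(\eta_{g-1})(e_i^*)\otimes(-p^{-\frac{g-1}{2}})$ (blockwise, with exponents $\frac{g}{2}$ and $\frac{g}{2}-1$, in the even case), and then computes the nonzero pairings exactly as you do, leaving the remaining entries to antisymmetry and block-diagonality. Your two additions, namely the observation that the Gram matrix is $(A^{-1})^{T}=-A^{-1}$ whatever sign and transpose convention is chosen (because $\iota$ enters twice) and the integrality check against $F^{g-1}M_{g-1}=M_{g-1}^t$, are sound but not needed for the proof.
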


\begin{proof}
In the odd genus case, it follows from the previous theorem that $$M(\eta_{g-1})(e_i^*) = -p^{\frac{g-1}{2}}f_i, \quad \quad M(\eta_{g-1})(f_i^*) = p^{\frac{g-1}{2}}e_i.$$ Tensoring with $\mathrm{Frac}(W(k)) = W(k)[p^{-1}]$, we recover the primal basis:
$$e_i = M(\eta_{g-1})(f_i^*) \otimes p^{-\frac{g-1}{2}}, \quad \quad f_i = M(\eta_{g-1})(e_i^*) \otimes -p^{-\frac{g-1}{2}}.$$
From this, it is obvious that, up to a factor, the quasi-polarisation remains unchanged when passing from the dual to the primal basis. Thus, the structure of the matrix is preserved and we only compute the non-zero entries: 
\begin{align*}
    \langle e_i,f_i\rangle &= \langle M(\eta_{g-1})(f_i^*) \otimes p^{-\frac{g-1}{2}}, M(\eta_{g-1})(e_i^*) \otimes -p^{-\frac{g-1}{2}} \rangle \\
    &= -p^{-(g-1)} \langle M(\eta_{g-1})(f_i^*), M(\eta_{g-1})(e_i^*)\rangle =-p^{-(g-1)} \langle f_i^*, e_i^*\rangle = p^{-\frac{g-1}{2}}.
\end{align*}
\noindent By antisymmetry, this determines the full matrix representation.
\vspace{4pt}

In the even genus case, the same argument applies blockwise. For odd $i$, we obtain after tensoring: 
\begin{alignat*}{4}
    e_i &= M(\eta_{g-1})(e_{i+1}^*) \otimes p^{-\frac{g}{2}}, &\quad \quad e_{i+1} &= M(\eta_{g-1})(e_i^*) \otimes -p^{-\frac{g}{2}}, \\
    f_i &= M(\eta_{g-1})(f_{i+1}^*) \otimes p^{-\frac{g}{2}+1}, &\quad \quad f_{i+1} &= M(\eta_{g-1})(f_i^*) \otimes -p^{-\frac{g}{2}+1}.
\end{alignat*}
Again, the structure of the matrix is preserved, and the non-zero values are
\begin{align*}
    \langle e_i, e_{i+1}\rangle = -p^{-g}\langle e_{i+1}^*, e_i^*\rangle = p^{-\frac{g}{2}}, \quad
    \langle f_i, f_{i+1}\rangle = -p^{-g+2}\langle f_{i+1}^*, f_i^*\rangle = p^{-\frac{g}{2}+1}.
\end{align*}
With this, antisymmetry fixes the full matrix representation.
\end{proof}

\subsection{Descent criterion}

Having established a normal form for the top level of the PFTQ, we now turn to descending the chain. In particular, we aim to replace the abstract descent condition from Definition \ref{df:PFTQDieudonne}~(iii) by an easily computable descent criterion:

\begin{lem}\label{lem:descentcriterion}
Let $M_i = \langle w, FM_{i+1}, VM_{i+1}\rangle$ with $w \in M_{i+1}$. Assume that the descent condition of a PFTQ is satisfied for all $l > i$, that is 
\begin{align*}
    F^{l-j}V^jM_l \subseteq M_l^t \quad \forall \; 0 \leq j \leq 
    \left\lfloor\smash{\tfrac{l}{2}} \right\rfloor.
\end{align*}
Then, the following criterion assures that the descent condition also holds one step down at level $i$:
{\small
\begin{align*}
    \langle w, F^{i-j}V^jw\rangle \in W(k) \quad \forall \, 0 \leq j \leq \left\lfloor\tfrac{i}{2} \right\rfloor \!\implies\! F^{i-j}V^jM_i \subseteq M_i^t \quad \forall \, 0 \leq j \leq \left\lfloor\tfrac{i}{2} \right\rfloor.
\end{align*}}
\end{lem}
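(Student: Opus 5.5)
The plan is to reduce the inclusion $F^{i-j}V^jM_i \subseteq M_i^t$ to finitely many integrality checks on generators, using Theorem \ref{thm:QuasipolarisationFull}~(ii). By that criterion it suffices to show $\langle F^{i-j}V^jx, y\rangle \in W(k)$ for all $x,y \in M_i$ and all $0\leq j\leq \lfloor i/2\rfloor$.

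\textbf{Reduction to generators.} Since $M_i$ is spanned over $W(k)$ by $w$, $FM_{i+1}$ and $VM_{i+1}$, and since $F^{i-j}V^j(ax) = \sigma^{i-2j}(a)F^{i-j}V^jx$ for $a\in W(k)$, bilinearity reduces the claim to $x,y$ ranging over $\{w\}\cup FM_{i+1}\cup VM_{i+1}$; the scalar twists are units or integral and so harmless. The adjunction rules will be used throughout. These are $\langle Fx,y\rangle=\sigma(\langle x,Vy\rangle)$ and, combining with alternation, $\langle x,Fy\rangle=\sigma(\langle Vx,y\rangle)$. Together they let us move $F$ and $V$ across the pairing, changing the value only by an automorphism $\sigma^{\pm1}$ of $W(k)$, which preserves integrality.

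\textbf{Case analysis.} Three types of pairs arise.
\begin{itemize}
\item[(a)] $x=y=w$: integrality is exactly the hypothesis $\langle w,F^{i-j}V^jw\rangle\in W(k)$, after using alternation to swap the arguments.
\item[(b)] $x\in FM_{i+1}\cup VM_{i+1}$ with $y$ arbitrary in $M_i\subseteq M_{i+1}$, say $x=Fm$. Then $F^{i-j}V^jFm=F^{(i+1)-j}V^jm$, and since $j\leq\lfloor i/2\rfloor\leq\lfloor (i+1)/2\rfloor$ this lies in $M_{i+1}^t$ by the assumed descent at level $i+1$. It therefore pairs integrally with every $y\in M_{i+1}$. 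For $x=Vm$ we get $F^{(i+1)-(j+1)}V^{j+1}m$, which is covered by level $i+1$ whenever $j+1\leq\lfloor(i+1)/2\rfloor$.
\item[(c)] $x=w$, $y\in FM_{i+1}\cup VM_{i+1}$: by alternation this is $-\langle y, F^{i-j}V^jw\rangle$. Moving the outer $F$ or $V$ of $y$ across the pairing turns it into a pairing of $m\in M_{i+1}$ with $F^{i+1-j}V^jw$ or $F^{i-j}V^{j+1}w$. These lie in $M_{i+1}^t$ by the level-$(i+1)$ descent, under the same index condition as in (b).
\end{itemize}

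\textbf{The main obstacle.} The difficulty is the boundary index: $i$ even and $j=i/2$, where $F^{j}V^{j+1}$ is not among the operators $F^{(i+1)-j'}V^{j'}$ with $j'\leq\lfloor (i+1)/2\rfloor$ controlled at level $i+1$. My first attempt is to use the adjunction to move a single $V$ to the other argument, trading $F^{j}V^{j+1}m$ paired with $y$ for $F^jV^jm=p^jm$ paired with $Fy$. For $y=Fn$ or $y=Vn$ one then gets $F^2n$ or $FVn=pn$. In the superspecial setting $F^2=\pm p$ on $M_{g-1}$ by Theorem \ref{thm:Structure}, so $F^2M_{l}\subseteq pM_{g-1}$, and this should let us rewrite the term as one controlled by level $i+1$ or higher (e.g.\ $F^{i+2-j'}V^{j'}$ at level $i+2$, using that $M_{i+1}\supseteq F M_{i+2}+VM_{i+2}$).

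If that fails, I would exploit the symmetry $F\leftrightarrow V$: replace $V^{j+1}F^j$ by $F^{j+1}V^{j}$ up to the identity $F^2=\pm V^2$ on the top module, which holds with the basis of Theorem \ref{thm:Structure}. This converts the problematic operator into $F^{(i+1)-j}V^j$, which is covered by hypothesis. Checking that this identity transfers to the submodules $M_{i+1}\subseteq M_{g-1}$, where it holds as operators on the ambient module, is the step I expect to require the most care.
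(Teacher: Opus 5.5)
There is a genuine gap. Your reduction to pairs of generators and the case split (a)--(c) are correct, and they amount to the same decomposition the paper uses. The problem is the boundary case $i$ even, $j=i/2$: you correctly isolate it but never close it.

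Your first move there, shifting one $V$ across the pairing, is the right one. But you then leave the factor $p^j$ on the $m$-side and start expanding $Fy$ over generators, which sends you into the top-level detour. That route is only a hope (``should let us rewrite\ldots''), and it also skips the pair $y=w$ in (b).

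The fallback is not viable either. Since $F$ is $\sigma$-linear and $V$ is $\sigma^{-1}$-linear, $F^2=\pm p$ and $F^2=\pm V^2$ hold only on the $W(\mathbb{F}_{p^2})$-span of the basis of Theorem \ref{thm:Structure}, not as operators on $M_{g-1}$. Moreover, trading $p^jV$ for $p^jF$ on $M_{i+1}$ would need $FM_{i+1}=VM_{i+1}$. That fails whenever $a(M_{i+1})<g$, i.e.\ for generic parameters (cf.\ Proposition \ref{pro:anumberY2}).

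The missing observation is that the $p^j$ should be pushed to the other argument; after that, no information about the top module is needed. For $m,n\in M_{i+1}$ and $i=2j$,
\[
\langle n, F^{j}V^{j+1}m\rangle = p^{j}\langle n, Vm\rangle = \sigma^{-1}\bigl(\langle p^{j}Fn, m\rangle\bigr) = \sigma^{-1}\bigl(\langle F^{(i+1)-j}V^{j}n, m\rangle\bigr).
\]
Here $j=\lfloor (i+1)/2\rfloor$ is an allowed index at level $i+1$, so $F^{(i+1)-j}V^jn\in M_{i+1}^t$ and the pairing is integral.

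Hence $F^{i-j}V^{j+1}M_{i+1}\subseteq M_{i+1}^t$ for all $0\le j\le\lfloor i/2\rfloor$; this is the Claim in the paper's proof. With it, both boundary instances go through exactly like the non-boundary ones: in (b), $x=Vm$ with any $y\in M_i\subseteq M_{i+1}$, including $y=w$; in (c), $y=Fn$, using $w\in M_{i+1}$.
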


\begin{proof} Firstly, we extend the index range of the descent condition without introducing any new assumptions:

\begin{claim*}
    Let $0 \leq j \leq \left\lfloor \frac{i}{2}\right \rfloor$ be arbitrary. Then, the following inclusion holds:
    \begin{align*}
        F^{(i+1)-(j+1)}V^{j+1}M_{i+1} \subseteq M_{i+1}^t.
    \end{align*}
\end{claim*}
{\renewcommand{\qedsymbol}{$\blacksquare$}
\begin{proof}
If $j <\frac{i}{2}$, then $j+1 \leq \left\lfloor \frac{i+1}{2}\right \rfloor$. In this case, the inclusion follows directly by applying the descent condition at level $l = i+1$ for $j+1$.
\vspace{4pt}

Now, suppose that $j = \frac{i}{2}$. In this case, we cannot directly apply the descent condition as $j+1 \nleq \left\lfloor \frac{i+1}{2}\right \rfloor$. Thus, we have to reduce the running index. For this, let $m_{i+1} \in M_{i+1}$. To establish the claimed inclusion, we have to prove that $$F^{\frac{i}{2}}V^{\frac{i}{2}+1}m_{i+1} \in M_{i+1}^t.$$
By Theorem \ref{thm:QuasipolarisationFull} (ii), this is equivalent to $\langle F^{\frac{i}{2}}V^{\frac{i}{2}+1}m_{i+1},n_{i+1}\rangle\in W(k)$ for all $n_{i+1} \in M_{i+1}$. We can rewrite this term as follows:
\begin{align*}
    \langle n_{i+1}, F^{\frac{i}{2}}V^{\frac{i}{2}+1}m_{i+1}\rangle &=  p^{\frac{i}{2}}\langle n_{i+1}, Vm_{i+1}\rangle = p^{\frac{i}{2}}\sigma^{-1}(\langle Fn_{i+1},m_{i+1}\rangle) \\
    &= \sigma^{-1}(\langle p^{\frac{i}{2}}Fn_{i+1}, m_{i+1}\rangle) =\sigma^{-1}(\langle F^{\frac{i}{2}+1}V^{\frac{i}{2}}n_{i+1},m_{i+1}\rangle).
\end{align*}

\noindent As $\sigma^{-1}(\langle F^{\frac{i}{2}+1}V^{\frac{i}{2}}n_{i+1},m_{i+1}\rangle) \in W(k)$ if and only if $\langle F^{\frac{i}{2}+1}V^{\frac{i}{2}}n_{i+1},m_{i+1}\rangle \in W(k)$, we have reduced the problem to showing that $$F^{\frac{i}{2}+1}V^{\frac{i}{2}}n_{i+1} \in M_{i+1}^t.$$ 

\noindent This holds as $n_{i+1} \in M_{i+1}$ and $F^{\frac{i}{2}+1}V^{\frac{i}{2}}M_{i+1} = F^{i+1-\frac{i}{2}}V^{\frac{i}{2}}M_{i+1} \subseteq M_{i+1}^t$, where we applied the descent condition at level $l = i+1$ for $j = \frac{i}{2}$.
\end{proof}
}

With this claim, we now prove the descent criterion. We define $l = i-2j$ and let $m_i \in M_i$. By assumption, we can write
$$m_i = \lambda_1F(m_{i+1}) + \lambda_2V(\hat{m}_{i+1}) + \lambda_3w,$$
for some $\lambda_k \in W(k)$ and $m_{i+1}, \hat{m}_{i+1} \in M_{i+1}$. With this, we decompose
{\small
\begin{align*}
    F^{i-j}V^jm_i &= F^l(p^jm_i) = p^jF^l(\lambda_1F(m_{i+1}) + \lambda_2V(\hat{m}_{i+1}) + \lambda_3w) \\
    &=\sigma^l(\lambda_1)F^{i+1-j}V^jm_{i+1} + \sigma^l(\lambda_2)F^{i-j}V^{j+1}\hat{m}_{i+1}+\sigma^l(\lambda_3)F^{i-j}V^jw.
\end{align*}}
To conclude that $F^{i-j}V^jm_i \in M_i^t$, it suffices to verify it for each summand:
\begin{itemize}
    \item[(1)]\underline{$F^{i+1-j}V^j(m_{i+1})$}: Here, it holds that $j \leq \left\lfloor \frac{i}{2}\right\rfloor$ and thus $j \leq \left\lfloor \frac{i+1}{2}\right\rfloor$. Applying the descent condition at level $l=i+1$ for $j$, we obtain that $F^{i+1-j}V^j(m_{i+1}) \in M_{i+1}^t \subseteq M_i^t.$
    \item[(2)] \underline{$F^{i-j}V^{j+1}(\hat{m}_{i+1})$}: For this, we use the above claim, which implies that $F^{i-j}V^{j+1}(\hat{m}_{i+1}) = F^{(i+1)-(j+1)}V^{j+1}(\hat{m}_{i+1}) \in M_{i+1}^t \subseteq M_i^t.$ 
    \item[(3)] \underline{$F^{i-j}V^j(w)$}: It is equivalent to show that $\langle n_i, F^{i-j}V^j(w) \rangle \in W(k)$ for all $n_i \in M_i$. Similar to the earlier decomposition, we write $$n_i = \mu_1F(n_{i+1}) + \mu_2V(\hat{n}_{i+1}) + \mu_3w$$ with $\mu_k \in W(k)$ and $n_{i+1}, \hat{n}_{i+1}\in M_{i+1}$. In the quasi-polarisation, this yields three terms for which we verify that they are in $W(k)$.

    The first one is $\langle F(n_{i+1}), F^{i-j}V^j(w)\rangle = \sigma(\langle n_{i+1}, F^{i-j}V^{j+1}(w)\rangle).$ It follows from the above claim that this term lies in $W(k)$.

    The second term $\langle V(\hat{n}_{i+1}), F^{i-j}V^j(w)\rangle = \sigma^{-1}(\langle \hat{n}_{i+1}, F^{i+1-j}V^j(w)\rangle)$ follows similarly as the descent condition implies that it lies in $W(k)$. 

    Lastly, all that is left to prove is that $\langle w, F^{i-j}V^j(w)\rangle \in W(k)$. However, this is exactly the descent criterion we have assumed. \qedhere
\end{itemize}
\end{proof}

\begin{rem}\label{rem:generaliseddescent}
A similar descent criterion can be established if $M_i$ is generated by more elements, say $M_i = \langle v, w, FM_{i+1}, VM_{i+1}\rangle$. In this case, all possible pairs of $v$ and $w$ have to be considered, so that we have to check the following four pairings in the index range $0 \leq j \leq \left\lfloor\tfrac{i}{2}\right\rfloor$ to ensure descent:
\begin{align*}
    \langle v, F^{i-j}V^jv \rangle, \langle w, F^{i-j}V^jv\rangle, \langle v, F^{i-j}V^jw\rangle, \langle w, F^{i-j}V^jw\rangle \in W(k).
\end{align*}
\end{rem}

\subsection{Fiber of the first truncation morphism} Having established these structure theorems, we compute the fibers of the truncation morphisms corresponding to the first step of an arbitrary PFTQ which are higher-dimensional generalisations of the Fermat curve:

\begin{thm}\label{thm:firststepPFTQ}
Let $M_{g-1} \supseteq M_{g-2}$ be the first step of a PFTQ. 
\begin{itemize}
    \item[(i)] For odd $g$, the fiber of the truncation morphism is given by the following equations in $\mathbb{P}^{g-1}$:
    $$\sum_{i=1}^g \alpha_i^{p^{g-2j}+1} = 0 \quad \forall \, j \in \{1, \ldots, \left\lfloor \tfrac{g}{2} \right\rfloor\}.$$
    \item[(ii)] For even $g$, the fiber of the truncation morphism is given by the following equations in $\mathbb{P}^{g-1}$:
    $$\sum_{\substack{i=1 \\ i \text{ odd}}}^g \left(\alpha_i\alpha_{i+1}^{p^{g-2j}}-\alpha_{i+1}\alpha_i^{p^{g-2j}}\right) =0 \quad \forall \, j \in \{1, \ldots, \tfrac{g}{2}\}.$$
\end{itemize}
\end{thm}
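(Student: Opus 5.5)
The plan is to parametrise $M_{g-2}$ by a single vector and reduce the PFTQ conditions to the integrality criterion of Lemma \ref{lem:descentcriterion}, evaluated in the normal form of Theorem \ref{thm:Structure} and Corollary \ref{cor:Structure}.

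\emph{Parametrisation.} By Definition \ref{df:PFTQDieudonne} (ii), we have $(F,V)M_{g-1} \subseteq M_{g-2}$ and $\dim_k(M_{g-1}/M_{g-2}) = g-1$. In the normal form, $FM_{g-1} = VM_{g-1} = \langle f_1,\dots,f_g,pe_1,\dots,pe_g\rangle$. Hence $M_{g-1}/(F,V)M_{g-1} \cong \bigoplus_i k\bar e_i$ is $g$-dimensional. So $M_{g-2}$ corresponds to a line in this $k$-space, and we may write
\[
M_{g-2} = \langle w, FM_{g-1}, VM_{g-1}\rangle, \qquad w = \sum_{i=1}^g \underline{\alpha_i}\, e_i,
\]
with $(\alpha_1:\dots:\alpha_g)\in\mathbb{P}^{g-1}$. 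Only the Teichmüller lifts matter, since changing $w$ by elements of $(F,V)M_{g-1}$ does not change $M_{g-2}$. The descent conditions for $l=g-1$ hold by Definition \ref{df:PFTQDieudonne} (i), since $F^{g-1-j}V^j M_{g-1} = F^{g-1}M_{g-1} = M_{g-1}^t$ up to signs. Lemma \ref{lem:descentcriterion} with $i=g-2$ therefore reduces the fiber to the conditions
\[
\langle w, F^{g-2-j}V^j w\rangle \in W(k), \qquad 0\le j\le \lfloor \tfrac{g-2}{2}\rfloor .
\]
For the converse, that these conditions are also necessary, note that $w \in M_{g-2}$ and that $F^{g-2-j}V^jw \in M_{g-2}^t$ forces integrality by Theorem \ref{thm:QuasipolarisationFull} (ii). So the fiber is exactly this locus.

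\emph{Computation, odd $g$.} Here $V = -F$ on the $e_i$ and $F^2 = -p$. Semilinearity gives
\[
F^{g-2-j}V^j w = \pm\, p^{j}\, F^{g-2-2j}\Bigl(\sum_i \sigma^{j}(\underline{\alpha_i}) e_i\Bigr).
\]
Since $g-2-2j$ is odd, this equals $\pm\, p^{j+(g-3-2j)/2}\sum_i \underline{\alpha_i}^{p^{g-2-j}} f_i$. Pairing with $w$ using $\langle e_i,f_i\rangle = p^{-(g-1)/2}$, the total exponent of $p$ is $j + \tfrac{g-3-2j}{2} - \tfrac{g-1}{2} = -1$. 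So the condition becomes $\sum_i \underline{\alpha_i}\,\underline{\alpha_i}^{p^{g-2-j}} \in pW(k)$, i.e.\ $\sum_i \alpha_i^{p^{g-2-j}+1}=0$ in $k$.

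Then I reindex. The exponents run over $p^{g-2},\dots,p^{g-2-\lfloor (g-2)/2\rfloor}$, but the statement has $p^{g-2j}$ for $j=1,\dots,\lfloor g/2\rfloor$, i.e.\ $p^{g-2},p^{g-4},\dots,p$. I expect the discrepancy to be resolved by a Frobenius-twisting argument. Applying $\sigma^{-1}$ and using the compatibility $\langle Fx,y\rangle=\sigma\langle x,Vy\rangle$, one can move $F$'s from one side of the pairing to the other. This should show that the condition for $F^aV^b$ is equivalent to a $p$-power twist of the condition for exponent $a-b$. Consequently the genuinely independent equations are indexed by the difference $g-2-2j$, and $\sum_i \alpha_i^{p^{m}+1}=0$ is equivalent to its $p$-th root twists. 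I would verify this bookkeeping carefully, since it determines the exact exponent list. It is where I expect the main obstacle, along with the signs of $F$ versus $V$ and the $\sigma$-twists of Teichmüller coefficients.

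\emph{Computation, even $g$.} The even case is analogous, using $F=V$ on the basis and $F^2 = p$, so $F^{n}e_i = p^{\lfloor n/2\rfloor}(e_i \text{ or } f_i)$. The pairing now couples $e_i$ with $e_{i+1}$ and $f_i$ with $f_{i+1}$ (for $i$ odd), with entries $p^{-g/2}$ and $p^{-g/2+1}$. Expanding gives the antisymmetric sums $\alpha_i\alpha_{i+1}^{p^{m}}-\alpha_{i+1}\alpha_i^{p^{m}}$ after checking that the power of $p$ is again exactly $-1$. The same reindexing then yields the stated family.
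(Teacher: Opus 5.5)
Your overall architecture matches the paper's proof, and that part is sound. You parametrise $M_{g-2}=\langle w,FM_{g-1},VM_{g-1}\rangle$ with $w=\sum_i\underline{\alpha_i}e_i$, apply Lemma \ref{lem:descentcriterion} at $i=g-2$, and check that the power of $p$ is exactly $-1$. Your explicit checks of the level-$(g-1)$ hypothesis and of necessity are correct, and the paper leaves both implicit.

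\textbf{The gap: the Frobenius twist on the coefficients.} In $\mathcal{D}_k$ one has $wV=V\sigma(w)$, so $V$ is $\sigma^{-1}$-semilinear, not $\sigma$-semilinear. More directly, $F^jV^j=p^j$, so
\[
F^{g-2-j}V^jw=F^{l}(p^jw),\qquad l=g-2-2j .
\]
The coefficients of $w$ are therefore twisted by $\sigma^{l}$. They are not twisted by $\sigma^{g-2-j}$, and there is no $\sigma^j$ coming from the $F^jV^j$ part. With this corrected, the same pairing computation you did (the $p$-exponent is unaffected) gives
\[
\sum_i\alpha_i^{p^{g-2-2j}+1}=0,\qquad 0\le j\le\lfloor g/2\rfloor-1 .
\]
This is exactly the statement after the shift $j\mapsto j+1$. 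In the even case the same shift gives the antisymmetric sums, and the index $j=g/2$ yields the vacuous equation with exponent $p^0$. So the discrepancy you flagged as the main obstacle is an artifact of the twist error.

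\textbf{Why the planned reindexing cannot work.} The Frobenius-twisting argument you propose cannot reconcile the two exponent lists, because equations with different exponents cut out genuinely different loci.
\begin{itemize}
\item For $g=5$, your formula gives $\sum_i\alpha_i^{p^2+1}=0$ in place of $\sum_i\alpha_i^{p+1}=0$. An $\mathbb{F}_{p^2}$-point with $\sum_i\alpha_i^{p+1}=0$ satisfies the correct system. It need not satisfy yours, since there $\alpha_i^{p^2+1}=\alpha_i^2$. For example, take $(1:\beta:0:0:0)$ with $\beta^{p+1}=-1$ and $\beta^2\neq-1$.
\item For $g=4$, your formula gives the nontrivial equation $\sum_{i\text{ odd}}(\alpha_i\alpha_{i+1}^{p}-\alpha_{i+1}\alpha_i^{p})=0$ in place of the vacuous one.
\item Taking $p$-th roots does not help: $\bigl(\sum_i\alpha_i^{p^m+1}\bigr)^{1/p}=\sum_i\alpha_i^{p^{m-1}+p^{-1}}$ is not of the required shape.
\end{itemize}
Your heuristic that the relevant exponent for $F^aV^b$ is the difference $a-b$ is correct. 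However, it is simply what the correct semilinear computation gives; it is not an extra argument to layer on top of the formula you wrote.
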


\begin{proof}
For both cases, the proof consists of the following two steps:
\begin{itemize}
    \item[(1)] Compute $M_{g-2}$ using the short exact sequence $$0 \to \frac{M_{g-2}}{(F,V)M_{g-1}} \to \frac{M_{g-1}}{(F,V)M_{g-1}} \to \frac{M_{g-1}}{M_{g-2}} \to 0.$$
    \item[(2)] Find the fibers of the truncation morphism via the descent criterion.
\end{itemize}

\noindent In the odd genus case, Theorem \ref{thm:Structure}~(i) gives the following basis of $M_{g-1}$:
\begin{align*}
    M_{g-1} &= \langle e_1, e_2, \ldots, e_g, f_1, f_2, \ldots, f_g\rangle, \\
    (F,V)M_{g-1} &= \langle \pm f_1, \ldots, \pm f_g, \mp pe_1, \ldots, \mp pe_g \rangle = \langle f_1, \ldots, f_g, pe_1, \ldots, pe_g\rangle.
\end{align*}
Therefore, the middle term of the sequence is explicitly given by $$\frac{M_{g-1}}{(F,V)M_{g-1}}= \frac{\langle e_1, \ldots, e_g, f_1, \ldots, f_g\rangle}{\langle pe_1, \ldots, pe_g, f_1, \ldots, f_g\rangle } \cong \langle [e_1], \ldots, [e_g]\rangle \cong \left(\frac{W(k)}{(p)}\right)^g \cong k^g,$$  where $[e_i]$ denotes the class of $e_i$ in the quotient.
To construct the short exact sequence, the third isomorphism theorem gives
$$\frac{M_{g-1}/(F,V)M_{g-1}}{M_{g-2}/(F,V)M_{g-1}}\cong \frac{M_{g-1}}{M_{g-2}},$$
which yields the wanted short exact sequence. Clearly, it is a sequence of $k$-vector spaces, so the dimension of the first term is given by $$\text{dim}_k\left(\tfrac{M_{g-2}}{(F,V)M_{g-1}}\right) = \text{dim}_k\left(\tfrac{M_{g-1}}{(F,V)M_{g-1}}\right) - \text{dim}_k\left(\tfrac{M_{g-1}}{M_{g-2}}\right) = g - (g-1) = 1,$$ where $\text{dim}_k\left(M_{g-1}/M_{g-2}\right) = g-1$ by part (ii) of the definition of a PFTQ.

Thus, $\frac{M_{g-2}}{(F,V)M_{g-1}} = \langle \alpha_1[e_1]+ \ldots + \alpha_g[e_g]\rangle,$ with at least one non-zero $\alpha_i \in k$, i.e.\ $[\alpha_1\mathpunct{:} \ldots\mathpunct{:} \alpha_g]\in \mathbb{P}_k^{g-1}$. By defining $v = \underline{\alpha_1}e_1 + \ldots + \underline{\alpha_g}e_g$, we conclude that
\begin{align*}
    M_{g-2} = \langle\underline{\alpha_1} e_1 + \ldots + \underline{\alpha_g}e_g, f_1, \ldots, f_g, pe_1, \ldots, pe_g\rangle = \langle v, FM_{g-1}, VM_{g-1}\rangle.
\end{align*}

\vspace{4pt}

With this, we are in the setup to apply the descent criterion. For the first step of a PFTQ, we apply it at level $i = g-2$, so we check that
$$\langle v, F^{(g-2)-j}V^jv\rangle \in W(k) \quad \text{for } j = 0, \ldots, \left\lfloor \tfrac{g-2}{2}\right\rfloor.$$
Introducing the convenient index $l = i-2j = g-2(j+1)$, we calculate 
$$F^{(g-2)-j}V^jv = F^l(p^jv) = \sigma^l(\underline{\alpha_1})p^j(-p)^{\frac{l-1}{2}}f_1 + \ldots + \sigma^l(\underline{\alpha_g})p^j(-p)^{\frac{l-1}{2}}f_g.$$
Substituting into the quasi-polarisation and using Corollary \ref{cor:Structure}~(i) gives
{\small
\begin{align*}
    \langle v,F^{(g-2)-j}V^jv\rangle \!&= \underline{\alpha_1}\sigma^l(\underline{\alpha_1})p^j(-p)^{\frac{l-1}{2}}\langle e_1, f_1\rangle + \! \ldots \!+ \underline{\alpha_g}\sigma^l(\underline{\alpha_g})p^j(-p)^{\frac{l-1}{2}}\langle e_g, f_g\rangle \\
    \!&=(-1)^{\frac{l-1}{2}}\left(\underline{\alpha_1}\sigma^l(\underline{\alpha_1})p^{j+\frac{l-1}{2}-\frac{g-1}{2}}\! + \! \ldots \!+ \underline{\alpha_g}\sigma^l(\underline{\alpha_g})p^{j+\frac{l-1}{2}-\frac{g-1}{2}}\!\right).
\end{align*}}
As the exponent of $p$ is $j+\frac{l-1}{2} - \frac{g-1}{2} = -1$, this term simplifies to 
$$\langle v,F^{(g-2)-j}V^jv\rangle = p^{-1}(-1)^{\frac{l-1}{2}}\left(\underline{\alpha_1}\sigma^l(\underline{\alpha_1})+\ldots + \underline{\alpha_g}\sigma^l(\underline{\alpha_g})\right).$$
Obviously, this term lies in $W(k)$ if and only if the coefficient of $p^{-1}$ vanishes: 
$$\underline{\alpha_1}\sigma^l(\underline{\alpha_1})+\ldots + \underline{\alpha_g}\sigma^l(\underline{\alpha_g}) = 0 \iff \alpha_1^{p^{g-2(j+1)}+1} + \ldots + \alpha_g^{p^{g-2(j+1)}+1} = 0,$$
for $j= 0, \ldots, \lfloor \frac{g}{2}\rfloor-1$. Shifting the index gives the claimed form of the fiber.
\vspace{0pt}

In the even genus case, we obtain by the same calculations that $$M_{g-2} = \langle \underline{\alpha_1} e_1 + \ldots + \underline{\alpha_g}e_g, f_1, \ldots, f_g, pe_1, \ldots, pe_g\rangle = \langle v, FM_{g-1}, VM_{g-1}\rangle.$$

\noindent Using the descent criterion to determine the fiber, we calculate
{\small
\begin{align*}
    \langle v, F^{(g-2)-j}V^jv\rangle \! = p^{-1}\!\left(\underline{\alpha_1}\sigma^l(\underline{\alpha_2}) -\underline{\alpha_2}\sigma^l(\underline{\alpha_1})\pm \! \ldots \!+ \underline{\alpha_{g-1}}\sigma^l(\underline{\alpha_g}) -\underline{\alpha_g}\sigma^l(\underline{\alpha_{g-1}})\right).
\end{align*}}
\noindent This is in $W(k)$ if the coefficient of $p^{-1}$ vanishes, giving the condition 
$$\alpha_1\alpha_2^{p^{g-2(j+1)}}-\alpha_2\alpha_1^{p^{g-2(j+1)}} \pm \ldots + \alpha_{g-1}\alpha_g^{p^{g-2(j+1)}}-\alpha_g\alpha_{g-1}^{p^{g-2(j+1)}} = 0,$$
for $j = 0, \ldots, \left\lfloor \frac{g-2}{2}\right\rfloor = \frac{g}{2}-1$. Shifting $j$ gives the claimed fiber.
\end{proof}

\subsection{Fiber of the last truncation morphism} We can similarly investigate the last step of a PFTQ, which exhibits a simpler structure:

\begin{thm}\label{thm:laststepPFTQ}
Let $M_1 \supseteq M_0$ be the last step of a PFTQ. Then, independent of the genus $g$, the fiber of the corresponding truncation morphism is a $\mathbb{P}^1$-bundle over $\mathcal{V}_1$.
\end{thm}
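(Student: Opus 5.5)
We follow the same two-step strategy as in the proof of Theorem \ref{thm:firststepPFTQ}: first parametrise the possible $M_0$ inside a fixed $M_1$ via a short exact sequence, then impose the descent condition at level $i=0$.

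\textbf{Step 1: parametrising $M_0$.} Fix a point of $\mathcal{V}_1$, i.e.\ a truncated PFTQ down to $M_1$. By Definition \ref{df:PFTQDieudonne}~(ii) we need $(F,V)M_1 \subseteq M_0 \subseteq M_1$ with $\dim_k(M_1/M_0)=1$. Consider the sequence
$$0 \to \frac{M_0}{(F,V)M_1} \to \frac{M_1}{(F,V)M_1} \to \frac{M_1}{M_0} \to 0.$$
The key input is $\dim_k(M_1/(F,V)M_1) = a(M_1)$ (Definition \ref{df:anumber}). We argue that $a(M_1)\le 2$ is forced, and that in fact $a(M_1)$ is what determines the fibre. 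Then $M_0/(F,V)M_1$ is a hyperplane in a $k$-vector space of dimension $a(M_1)$.

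To control $a(M_1)$, note that $M_1 \supseteq M_0 \supseteq (F,V)M_1$ and $\dim_k(M_1/M_0)=1$. The relevant quantity is $\dim_k M_1/(F,V)M_1$. One shows this equals $2$ using $\dim_k(M_1/FM_1)=g$ (Lemma \ref{lem:PFTQLemma}~(ii)) together with the rigid/flag structure $M_1 \supseteq (F,V)M_2$ of codimension $1$. I expect the cleanest route is to write $M_1=\langle w, FM_2, VM_2\rangle$ and compute $(F,V)M_1$ modulo $(F,V)^2M_2$. If instead $a(M_1)$ can be larger in degenerate cases, the fibre is a $\mathbb{P}^{a-1}$, and I would then restrict the claim to the generic stratum. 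Hyperplanes in a $2$-dimensional space form a $\mathbb{P}^1$. Writing $M_0=\langle u, FM_1, VM_1\rangle$ with $u = \underline{\beta_1}x_1+\underline{\beta_2}x_2$ for lifts $x_1,x_2$ of a basis, these choices vary algebraically with the point of $\mathcal{V}_1$, since the quotient $M_1/(F,V)M_1$ forms a rank-$2$ vector bundle over $\mathcal{V}_1$. Projectivising it gives the $\mathbb{P}^1$-bundle.

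\textbf{Step 2: descent at level $0$.} At level $i=0$ the only index is $j=0$, so Lemma \ref{lem:descentcriterion} (with $M_{i+1}=M_1$, whose descent holds by assumption) reduces the condition $M_0\subseteq M_0^t$ to
$$\langle u, u\rangle \in W(k).$$
This holds automatically because the quasi-polarisation is alternating, so $\langle u,u\rangle=0$. Hence no equations are imposed on $[\beta_1:\beta_2]$, and every hyperplane gives a valid PFTQ. Therefore the fibre is the full $\mathbb{P}^1$, independently of $g$.

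The main obstacle is Step 1, namely proving that $\dim_k M_1/(F,V)M_1 = 2$ uniformly, or at least identifying the correct rank-$2$ bundle. I would try to deduce it by induction down the flag from Definition \ref{df:PFTQDieudonne}~(ii): each $M_i/(F,V)M_{i+1}$ has dimension $1$, as in the dimension count of Theorem \ref{thm:firststepPFTQ}, and from this one estimates $(F,V)M_1$. The descent step, by contrast, is trivial by alternation.
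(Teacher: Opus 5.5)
There is a genuine gap in Step 1, and it also breaks Step 2.

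\textbf{The dimension claim fails.} The claim $\dim_k M_1/(F,V)M_1=2$ is false in general, and the inequality you hope for goes the wrong way. Descent at level $1$ gives $FM_1\subseteq M_1^t$. Hence also $VM_1\subseteq M_1^t$, since $\langle Vm,n\rangle=-\sigma^{-1}(\langle Fn,m\rangle)$. So $(F,V)M_1\subseteq M_1^t$, and therefore $a(M_1)\geq \dim_k M_1/M_1^t=2$, with equality if and only if $(F,V)M_1=M_1^t$. The inequality can be strict. For $g=3$ and an $\mathbb{F}_{p^2}$-rational point $[\alpha_1:\alpha_2:\alpha_3]$ of the Fermat curve, one has $Fv=-Vv$ and $a(M_1)=3$; this is the paper's computation of $a(Y_1)$. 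Consequently:
\begin{itemize}
\item $M_1/(F,V)M_1$ is not a rank-$2$ bundle over $\mathcal{V}_1$.
\item Your proposed induction (each $M_i/(F,V)M_{i+1}$ one-dimensional) also fails: the non-supergeneral computations for $g=4,5$ need two or three new generators.
\end{itemize}

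\textbf{The fallback is also incorrect.} Where $a(M_1)=a>2$, the quotient $M_0/(F,V)M_1$ has dimension $a-1\geq 2$. So Lemma \ref{lem:descentcriterion} with a single generator $u$ does not apply. By Remark \ref{rem:generaliseddescent}, the cross-pairings $\langle u_r,u_s\rangle\in W(k)$ are genuine conditions, not automatic by alternation. The fibre there is not $\mathbb{P}^{a-1}$. These conditions cut it down to the hyperplanes containing $M_1^t/(F,V)M_1$, which again form a $\mathbb{P}^1$ --- but your argument does not show this. Restricting to the generic stratum would not prove the theorem as stated, which concerns all of $\mathcal{V}_1$.

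\textbf{The missing idea.} Measure $M_0$ against $M_1^t=\mathrm{im}(M(\eta_1))$ rather than against $(F,V)M_1$. Its colength is uniform over $\mathcal{V}_1$: since $\eta_1=\rho_1^\vee\circ\eta_0\circ\rho_1$ with $\deg\rho_1=p$ and $\eta_0$ principal, $\dim_k M_1/M_1^t=\log_p\deg\eta_1=2$. Moreover $M_1^t\subseteq M_0$, because $M(\eta_1)=M(\rho_1)\circ M(\eta_0)\circ M(\rho_1^\vee)$ has image inside $M(\rho_1)(M_0)=M_0$. Hence $M_0=\langle v, M_1^t\rangle$ with $v=\underline{\alpha_1}v_1+\underline{\alpha_2}v_2$ and $[\alpha_1:\alpha_2]\in\mathbb{P}^1$. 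The descent condition $M_0\subseteq M_0^t$ is then automatic: $\langle v,v\rangle=0$, and $\langle v,M_1^t\rangle\subseteq W(k)$ since $v\in M_1$. This is the paper's argument. On the locus $a(M_1)=2$ it coincides with yours, because there $(F,V)M_1=M_1^t$.
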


\pagebreak
\begin{proof} The proof is a slightly adapted version of the previous one:
\begin{itemize}
    \item[(1)] Compute $M_0$ using the short exact sequence $$0 \to \frac{M_0}{\text{im}(M(\eta_1))} \to \frac{M_1}{\text{im}(M(\eta_1))} \to \frac{M_1}{M_0} \to 0.$$
    \item[(2)] Verify descent to find the fiber of the truncation morphism.
\end{itemize}

For (1), the sequence follows from the third isomorphism theorem as $$\frac{M_1/\text{im}(M(\eta_1))}{M_0/\text{im}(M(\eta_1))} \cong \frac{M_1}{M_0}.$$
We now calculate the dimensions of the terms in this sequence. Whilst the dimension of the last term is fixed by the definition of a PFTQ, we use Dieudonné theory for the middle term: Under this antiequivalence of categories, $\frac{M_1}{\text{im}(M(\eta_1))} = \text{coker}(M(\eta_1))$ corresponds to $\text{ker}(\eta_1)$. Thus, $$\text{dim}_k\left(\tfrac{M_1}{\text{im}(M(\eta_1))}\right) = \text{log}_p(\text{rk}_k(\text{ker}(\eta_1))) = \text{log}_p(\text{deg}(\eta_1)).$$
By construction of PFTQs, it holds that $\eta_1 = \rho_1^{\vee} \circ \eta_0 \circ \rho_1$. As the degree of isogenies is multiplicative, we determine the degree of $\eta_1$ by considering each map individually.
Noting that dual maps have the same degree as the original, it follows that $\text{deg}(\rho_1^{\vee}) = \text{deg}(\rho_1)$. By definition, $\rho_1$ has an $\alpha$-group of rank $1$ as its kernel, implying $\text{deg}(\rho_1) = p$. Lastly, $\eta_0$ is a principal polarisation and thus of degree $1$. Combining these observations, we get $$\text{deg}(\eta_1) = \text{deg}(\rho_1^{\vee}) \cdot \text{deg}(\eta_0)\cdot \text{deg}(\rho_1)=  \text{deg}(\rho_1)^2 = p^2.$$ 
Therefore, $\text{dim}_k\left(M_1/\text{im}(M(\eta_1))\right) = 2,$ and we fix a basis of it as $[v_1], [v_2]$.
The short exact sequence implies $$\text{dim}_k\left(\tfrac{M_0}{\text{im}(M(\eta_1))}\right) = \text{dim}_k\left(\tfrac{M_1}{\text{im}(M(\eta_1))}\right)-\text{dim}_k\left(\tfrac{M_1}{M_0}\right) = 2-1 = 1.$$
It follows that $\tfrac{M_0}{\text{im}(M(\eta_1))} = \langle \alpha_1[v_1]+\alpha_2[v_2]\rangle,$ where $\alpha_i \in k$ are not both zero, i.e.\ $[\alpha_1\mathpunct{:}\alpha_2]\in \mathbb{P}_k^1$. By defining $v = \underline{\alpha_1}v_1 + \underline{\alpha_2}v_2,$ we conclude that $$M_0 = \langle v, \text{im}(M(\eta_1))\rangle.$$

For (2), we claim that there are no restrictions on $v$ and thus the fiber is the full $\mathbb{P}^1$-bundle. For this, we have to verify that the descent condition $F^{i-j}V^jM_0\subseteq M_0^t$ is satisfied. Since we are at the last step of the PFTQ, $i=0$ and $j=0$. Thus, we only have to check if $M_0 \subseteq M_0^t$. 

As $\text{im}(M(\eta_1)) = M_1^t \subseteq M_0^t$ by definition of a PFTQ, it remains to verify that $v \in M_0^t$. By Theorem \ref{thm:QuasipolarisationFull} (ii), this is equivalent to showing that $\langle v,m\rangle \in W(k)$ for all $m \in M_0$. To this end, we write $m = \lambda_0v+\lambda_1w$ for some $\lambda_i \in W(k)$ and $w \in \text{im}(M(\eta_1)) = M_1^t$. Using bilinearity, we see that
\begin{align*}
    \langle v,m\rangle = \langle v, \lambda_0v + \lambda_1w\rangle = \lambda_0 \langle v,v \rangle + \lambda_1\langle v,w\rangle.
\end{align*}
The first summand vanishes as the quasi-polarisation is alternating, and the second lies in $W(k)$ as $w \in M_1^t \subseteq M_0^t$, showing that the descent condition holds for any $[\alpha_1\mathpunct{:}\alpha_2]\in \mathbb{P}^1_k$.
\end{proof}

\newpage
\section{Explicit calculations in low dimensions}\label{sec:calculations}

In this section, we explicitly compute PFTQs in low dimensions, focusing primarily on the resulting fibers of the truncation morphisms. Partial results in this direction can be found in the literature, see \parencite{Harashita, Pieper, Dina}; however, these sources do not treat this problem systematically and instead focus on isolated steps of the PFTQs in selected dimensions.

\subsection{Dimension \texorpdfstring{$g=2$}{g=2}}\label{sec:g2}

We begin by describing polarised flag type quotients of abelian surfaces. These consist of the data given in the following commutative diagram, the translation of which into the language of Dieudonné theory is given on the right-hand side:

\begin{equation*}
\begin{tikzcd}[column sep = large, row sep = 2.3em]
Y_1 \arrow[r, "\rho_1"] \arrow[d, "\eta_1"'] & Y_0 \arrow[d, "\eta_0"]               &   & M_1                                                         & M_0 \arrow[l, "M(\rho_1)"']   \\
Y_1^{\vee}                                   & Y_0^{\vee} \arrow[l, "\rho_1^{\vee}"] &   & M_1^t \arrow[r, "M(\rho_1^{\vee})"'] \arrow[u, "M(\eta_1)"] & M_0^t \arrow[u, "M(\eta_0)"']
\end{tikzcd}
\end{equation*}

\noindent By Theorem \ref{thm:Structure} and its corollary, we obtain a full description of $M_1$ and its quasi-polarisation: $M_1$ has a skeletal basis $e_1, f_1, e_2, f_2$ and the quasi-polarisation on $M_1$ is given by
\begin{align*}
\begin{pNiceMatrix}[c, margin=2pt]
    \Block{2-2}<\Large>{0} 
    & & p^{-1} & 0 \\
    & & 0 & 1 \\
    -p^{-1} & 0 & \Block{2-2}<\Large>{0}&  \\
    0 & -1 &  &  \\
    \CodeAfter
    \begin{tikzpicture}
        \draw[line width = 0.2pt, dashed] (3-|1) -- (3-|5) ;
        \draw[line width = 0.2pt, dashed] (1-|3) |- (5-|3) ;
    \end{tikzpicture}
\end{pNiceMatrix}.
\end{align*}

To calculate $M_0$, we descend down the PFTQ, recalling that the fiber of the truncation morphism corresponding to the last step is a $\mathbb{P}^1$-bundle. This is consistent with the structure theorem for the first step of a PFTQ: For $g=2$, it states that the fiber is cut out of $\mathbb{P}^1$ by an empty set of equations. 

By the same construction as in the proof of Theorem \ref{thm:Structure}, we obtain 
\begin{align*}\customanchor{eq:g2_0}{}
M_0 = \langle \underline{\alpha_1} e_1 + \underline{\alpha_2} e_2, f_1, pe_1, f_2, pe_2\rangle \quad \text{for some }[\alpha_1\mathpunct{:}\alpha_2]\in \mathbb{P}^1.
\end{align*}

\noindent This parameter also allows us to calculate the $a$-number of $Y_0$:

\begin{pro}\label{pro:anumberY0}
The $a$-number of the abelian variety $Y_0$ is given by
\begin{align*}
    a(Y_0) = \begin{cases}
        2 & \text{if } [\alpha_1\mathpunct{:}\alpha_2]\in\mathbb{P}^1(\mathbb{F}_{p^2}), \\
        1 & \text{if } [\alpha_1\mathpunct{:}\alpha_2]\notin\mathbb{P}^1(\mathbb{F}_{p^2}).
    \end{cases}
\end{align*}
\end{pro}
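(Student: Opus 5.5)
By Proposition \ref{pro:anumber}~(i) it suffices to compute $a(M_0)=\dim_k(M_0/(F,V)M_0)$ using the explicit description
$M_0=\langle v, f_1, f_2, pe_1, pe_2\rangle$ with $v=\underline{\alpha_1}e_1+\underline{\alpha_2}e_2$, where $Fe_i=Ve_i=f_i$ and $Ff_i=Vf_i=pe_i$ (Theorem \ref{thm:Structure}~(ii) for $g=2$). First I will fix a $W(k)$-basis of $M_0$. Without loss of generality $\alpha_1\neq 0$ (else swap indices), and after rescaling $\alpha_1=1$. Then $M_0$ is free with basis $v, e_2' := pe_2, f_1, f_2$, since $pe_1 = pv - \underline{\alpha_2}pe_2$. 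In particular $\dim_k M_0/pM_0 = 4$.

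Next I will compute $(F,V)M_0$ by applying $F$ and $V$ to these generators:
\begin{align*}
Fv &= \sigma(\underline{\alpha_1})f_1+\sigma(\underline{\alpha_2})f_2, & Vv &= \sigma^{-1}(\underline{\alpha_1})f_1+\sigma^{-1}(\underline{\alpha_2})f_2,\\
Ff_i &= Vf_i = pe_i, & F(pe_i) &= V(pe_i) = pf_i.
\end{align*}
Modulo $pM_0$, the classes $pe_1$ and $pe_2$ lie in $pM_0$ only up to the $e$-part. Concretely, $pe_2 = e_2'$ is a basis vector, so it is not in $pM_0$, while $pe_1 \equiv -\underline{\alpha_2}e_2' \pmod{pM_0}$. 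Also $pf_i\in pM_0$. Hence the image of $(F,V)M_0$ in $M_0/pM_0 \cong k^4$ is spanned by the following vectors:
\begin{itemize}
\item the vector $e_2'$, coming from $Ff_2$;
\item the two vectors $\alpha_1^{p}[f_1]+\alpha_2^{p}[f_2]$ and $\alpha_1^{1/p}[f_1]+\alpha_2^{1/p}[f_2]$.
\end{itemize}
One must check that $pM_0\subseteq (F,V)M_0$, which holds because $p=FV$. So $a(M_0)=4-\dim$ of this span.

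The main step, and the only real content, is the rank of the $2\times 2$ matrix with rows $(\alpha_1^p,\alpha_2^p)$ and $(\alpha_1^{1/p},\alpha_2^{1/p})$. Its determinant is $\alpha_1^p\alpha_2^{1/p}-\alpha_2^p\alpha_1^{1/p}$. Raising to the $p$-th power shows it vanishes iff $\alpha_1^{p^2}\alpha_2=\alpha_2^{p^2}\alpha_1$. For $\alpha_1=1$ this reads $\alpha_2^{p^2}=\alpha_2$, i.e.\ $\alpha_2\in\mathbb{F}_{p^2}$, which is exactly $[\alpha_1\mathpunct{:}\alpha_2]\in\mathbb{P}^1(\mathbb{F}_{p^2})$. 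The matrix never has rank $0$, since $(\alpha_1,\alpha_2)\neq 0$. So the span has dimension $3$ generically, giving $a=1$, and dimension $2$ on $\mathbb{F}_{p^2}$-points, giving $a=2$.

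The obstacle I anticipate is bookkeeping rather than ideas. I need to confirm that $(F,V)M_0$ contains nothing beyond these contributions modulo $pM_0$ (for instance, that $F(e_2')$ and $V(e_2')$ land in $pM_0$), and that the identification of $M_0/pM_0$ with the chosen basis is correct, so that $[f_1],[f_2],[e_2']$ are independent there.
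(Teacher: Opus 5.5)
Your proposal is correct and is essentially the paper's proof. You use the same basis $v, f_1, f_2, pe_2$ of $M_0$ (after normalising $\alpha_1=1$) and the same generators $Fv, Vv, pe_1, pe_2, pf_2$ of $(F,V)M_0$; reducing modulo $pM_0\subseteq (F,V)M_0$ and reading the case split off the determinant $\alpha_1^p\alpha_2^{1/p}-\alpha_2^p\alpha_1^{1/p}$ is a tidier packaging of the paper's observation that $Fv$ and $Vv$ coincide exactly when $\alpha_2\in\mathbb{F}_{p^2}$. It also makes explicit why $\sigma(\underline{\alpha_2})-\sigma^{-1}(\underline{\alpha_2})$ is a unit in the supergeneral case, which the paper uses tacitly.
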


\begin{proof}
Recall that $a(Y_0) = a(M_0) = \text{dim}_k(M_0/(F,V)M_0)$. For the following computations, we may assume without loss of generality that $\alpha_1 = 1$. In this case, we can find a basis of $M_0$ by removing $pe_1$ from the generators as $pe_1 = p (e_1+\underline{\alpha_2}e_2) - \underline{\alpha_2}(pe_2)$. Therefore, to obtain the $a$-number, we calculate the dimension of $$\frac{M_0}{(F,V)M_0} = \frac{\langle e_1 + \underline{\alpha_2} e_2, f_1, f_2, pe_2\rangle}{\langle f_1 + \sigma(\underline{\alpha_2})f_2, f_1 + \sigma^{-1}(\underline{\alpha_2})f_2, pe_1, pe_2, pf_2\rangle}.$$

Calculating the dimension is now equivalent to finding the rank of the matrix encoding $(F,V)M_0$ inside $M_0$, which can algorithmically be solved by computing the Smith normal form. In later proofs, we will use this algorithmic approach; however, it is instructive to see the hands-on arguments at least once: If $[\alpha_1\mathpunct{:}\alpha_2]\in \mathbb{P}^1(\mathbb{F}_{p^2})$, then the first two generators of the denominator coincide. Therefore, we can compute that
{\small
\begin{align*}
    \frac{M_0}{(F,V)M_0} &= \frac{\langle e_1 + \underline{\alpha_2} e_2, f_1, f_2, pe_2\rangle}{\langle f_1 + \sigma(\underline{\alpha_2})f_2, pe_1, pe_2, pf_2\rangle} = \frac{\langle e_1 + \underline{\alpha_2} e_2, f_1 + \sigma(\underline{\alpha_2})f_2, f_2, pe_2\rangle}{\langle f_1 + \sigma(\underline{\alpha_2})f_2, pe_1, pe_2, pf_2\rangle} \\
    &\cong  \langle [e_1 + \underline{\alpha_2}e_2], [f_2]\rangle \cong \left(\frac{W(k)}{(p)}\right)^2 \cong k^2.
\end{align*}}
If $[\alpha_1\mathpunct{:}\alpha_2]\notin \mathbb{P}^1(\mathbb{F}_{p^2})$, a similar calculation shows the following:
{\small
\begin{align*}
    \frac{M_0}{(F,V)M_0} &= \frac{\langle e_1 + \underline{\alpha_2} e_2, f_1, f_2, pe_2\rangle}{\langle f_1 + \sigma(\underline{\alpha_2})f_2, f_1 + \sigma^{-1}(\underline{\alpha_2})f_2,  pe_1, pe_2, pf_2\rangle} \\
    &= \frac{\langle e_1 + \underline{\alpha_2} e_2, f_1 + \sigma(\underline{\alpha_2})f_2, f_1 + \sigma^{-1}(\underline{\alpha_2})f_2, pe_2\rangle}{\langle f_1 + \sigma(\underline{\alpha_2})f_2, f_1 + \sigma^{-1}(\underline{\alpha_2})f_2,  pe_1, pe_2, pf_2\rangle} \cong \langle [e_1 + \underline{\alpha_2}e_2]\rangle \cong k.
\end{align*}}
Thus, we obtain the claimed dimensions and corresponding $a$-numbers.
\end{proof}

\begin{rem}
It is a general phenomenon that the generic choice of parameters leads to supergeneral abelian varieties. Therefore, we often call such a choice of parameters \emph{supergeneral}.
\end{rem}

\begin{rem}
Comparing the description of $2$-dimensional PFTQs with the data of the Moret-Bailly family \parencite[II. Exemples]{MoretBailly}, we see that the Moret-Bailly parameter and $[\alpha_1\mathpunct{:}\alpha_2]$ carry the same information. Thus, PFTQs may be viewed as generalisations of the Moret-Bailly construction to higher dimensions.
\end{rem}

\subsection{Dimension \texorpdfstring{$g=3$}{g=3}}\label{sec:g3}

The data of $3$-dimensional PFTQs can be represented by the following commutative diagram, the translation of which into the language of Dieudonné theory is given on the right-hand side:
{\small
\begin{equation*}
\begin{tikzcd}[column sep = large, row sep = large]
Y_2 \arrow[r, "\rho_2"] \arrow[d, "\eta_2"'] & Y_1 \arrow[r, "\rho_1"] \arrow[d, "\eta_1"] & Y_0 \arrow[d, "\eta_0"]               & & M_2                                                         & M_1 \arrow[l, "M(\rho_2)"']                                  & M_0 \arrow[l, "M(\rho_1)"']   \\
Y_2^{\vee}                                   & Y_1^{\vee} \arrow[l, "\rho_2^{\vee}"]       & Y_0^{\vee} \arrow[l, "\rho_1^{\vee}"]  &  & M_2^t \arrow[u, "M(\eta_2)"] \arrow[r, "M(\rho_2^{\vee})"'] & M_1^t \arrow[u, "M(\eta_1)"'] \arrow[r, "M(\rho_1^{\vee})"'] & M_0^t \arrow[u, "M(\eta_0)"']
\end{tikzcd}
\end{equation*}}

\noindent As before, the top of the PFTQ is fully described by Theorem \ref{thm:Structure} and its corollary:
In particular, $M_2$ has a non-skeletal basis $e_1, f_1, e_2, f_2, e_3, f_3$ subject to the Frobenius-Verschiebung relations
\begin{align*}
    Fe_i = f_i, \quad Ve_i = -f_i, \quad Ff_i = -pe_i, \quad Vf_i = pe_i.
\end{align*}
With respect to this basis, the quasi-polarisation is given by the matrix
\begin{align*}
\begin{pNiceMatrix}[c, margin=2pt]
    0 & p^{-1} & & & & \\
    -p^{-1} & 0 & &  & & \\
    & &  0 & p^{-1} & & \\
    & & -p^{-1} & 0 & & \\
    & & & &  0 & p^{-1} \\
    & & & & -p^{-1} & 0 \\
    \CodeAfter
    \begin{tikzpicture}
        \draw (3-|1) -- (3-|5) ;
        \draw (5-|3) -- (5-|7) ;
        \draw (1-|3) |- (5-|3) ;
        \draw (3-|5) |- (7-|5) ;
    \end{tikzpicture}
\end{pNiceMatrix}.
\end{align*}

\vspace{10pt}

To determine $M_1$, we descend one level in the PFTQ. As this is the first step, we can apply the black-box Theorem \ref{thm:firststepPFTQ}. Following the same construction as in the proof of that statement, we obtain
$$M_1 = \langle \underline{\alpha_1}e_1 + \underline{\alpha_2}e_2 + \underline{\alpha_3}e_3, FM_2, VM_2 \rangle,$$
and the descent criterion imposes that $[\alpha_1\mathpunct{:} \alpha_2\mathpunct{:} \alpha_3] \in \mathbb{P}^2$ satisfies the equation 
\begin{align}\customanchor{eq:g3_1}{}
    \boxed{\alpha_1^{p+1} + \alpha_2^{p+1} + \alpha_3^{p+1} = 0.}
\end{align}
Note that this is a Fermat curve in $\mathbb{P}^2$. With this, we can calculate the $a$-number of $Y_1$.
\begin{pro}
The $a$-number of the abelian variety $Y_1$ is given by
\begin{align*}
    a(Y_1) = \begin{cases}
        3 & \text{if } [\alpha_1\mathpunct{:}\alpha_2\mathpunct{:}\alpha_3] \in \mathbb{P}^2(\mathbb{F}_{p^2}),\\
        2 & \text{if } [\alpha_1\mathpunct{:}\alpha_2\mathpunct{:}\alpha_3] \notin \mathbb{P}^2(\mathbb{F}_{p^2}).
    \end{cases}
\end{align*}
\end{pro}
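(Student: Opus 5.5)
The plan is to compute $a(Y_1)=a(M_1)=\dim_k\bigl(M_1/(F,V)M_1\bigr)$ (Proposition \ref{pro:anumber}) directly from the explicit description
$$M_1=\langle v, FM_2, VM_2\rangle=\langle v, f_1,f_2,f_3, pe_1,pe_2,pe_3\rangle,\qquad v=\underline{\alpha_1}e_1+\underline{\alpha_2}e_2+\underline{\alpha_3}e_3,$$
obtained above. Rather than grinding through a Smith normal form, I will work modulo the lattice $pM_2=\langle pe_i,pf_i\rangle$, which sits inside both $M_1$ and $(F,V)M_1$.

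\emph{Step 1: $pM_2$ lies in both modules.} Clearly $pe_i\in M_1$ and $pf_i\in M_1$, so $pM_2\subseteq M_1$. For the other inclusion, the relations $Ff_i=-pe_i$ and $F(pe_i)=pf_i$ from Theorem \ref{thm:Structure}~(i) show that $pM_2\subseteq (F,V)M_1$.

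\emph{Step 2: a basis of $M_1/pM_2$.} Since $\dim_k(M_2/pM_2)=6$ and $\dim_k(M_2/M_1)=2$ by the PFTQ definition, the quotient $M_1/pM_2$ is $4$-dimensional. Its basis is $[v],[f_1],[f_2],[f_3]$.

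\emph{Step 3: the image of $(F,V)M_1$.} This image is spanned by the classes of $F$ and $V$ applied to the generators of $M_1$. Applying $F$ and $V$ to the $f_i$ and $pe_i$ lands in $pM_2$, hence contributes nothing. Applying them to $v$ gives
$$Fv=\textstyle\sum_i \sigma(\underline{\alpha_i})f_i,\qquad Vv=-\textstyle\sum_i\sigma^{-1}(\underline{\alpha_i})f_i.$$
So the image of $(F,V)M_1$ in $M_1/pM_2$ is the span of the two vectors $(\alpha_1^p,\alpha_2^p,\alpha_3^p)$ and $(\alpha_1^{1/p},\alpha_2^{1/p},\alpha_3^{1/p})$ inside $\langle [f_1],[f_2],[f_3]\rangle\cong k^3$. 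Consequently
$$a(M_1)=4-\operatorname{rank}\{\alpha^{(p)},\alpha^{(1/p)}\}.$$

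\emph{Step 4: the rank.} Both vectors are nonzero because $[\alpha_1\mathpunct{:}\alpha_2\mathpunct{:}\alpha_3]$ is a projective point, so the rank is $1$ or $2$. The rank is $1$ exactly when $\alpha^{(p)}=c\,\alpha^{(1/p)}$ for some $c\in k^\times$. Raising to the $p$-th power turns this into $\alpha^{(p^2)}=c^p\alpha$, which says precisely that $[\alpha_1\mathpunct{:}\alpha_2\mathpunct{:}\alpha_3]\in\mathbb{P}^2(\mathbb{F}_{p^2})$. This yields $a=3$ on $\mathbb{F}_{p^2}$-rational points and $a=2$ otherwise.

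The Fermat equation plays no role in the rank computation; it only restricts which points occur, and both cases do occur. The main point requiring care is Step 2, namely verifying that $[v],[f_1],[f_2],[f_3]$ really are independent in $M_1/pM_2$ (in particular that $[v]\notin\langle [f_i]\rangle$). This follows from the dimension count, or directly because some $\alpha_i\neq 0$ while $v$ involves no $f_i$. One must also note that $(F,V)M_1$ contains no further vectors beyond those listed in Step 3, which the list of generators makes clear.
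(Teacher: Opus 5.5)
Your proof is correct and follows essentially the paper's route. The paper also computes $\dim_k M_1/(F,V)M_1$ directly from the explicit generators (``analogously to Proposition \ref{pro:anumberY0}''), and there too the dichotomy comes from whether the classes of $Fv$ and $Vv$ are proportional modulo $p$, i.e.\ whether $[\alpha_1\mathpunct{:}\alpha_2\mathpunct{:}\alpha_3]$ is $\mathbb{F}_{p^2}$-rational; your sandwich $pM_2\subseteq (F,V)M_1\subseteq M_1$ is a tidy packaging that replaces the paper's basis reduction and Smith normal form with a rank computation of two vectors in $k^3$.
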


\begin{proof}
    Calculate $\dim(M_1/(F,V)M_1)$ analogously to Proposition \ref{pro:anumberY0}.
\end{proof}

At this point, our analysis of $3$-dimensional PFTQs is complete. The last step is governed by Theorem \ref{thm:laststepPFTQ}, which implies that the fiber of the truncation morphism is given by a $\mathbb{P}^1$-bundle over the Fermat curve. This recovers the result also obtained in \parencite{LiOort} within this new framework. Consequently, the structure of $M_0$, and hence of the abelian variety $Y_0$, is determined.

\subsection{Dimension \texorpdfstring{$g=4$}{g=4}}\label{sec:g4}

In this section, we explicitly compute the polarised flag type quotients for $g=4$. Particular care must be taken when selecting parameters, as generic choices only yield supergeneral abelian varieties. Nevertheless, this calculation is instructive as it provides a clean entry point to the more general case involving arbitrary parameter choices. For this reason, we present both approaches in this section.

In both cases, a polarised flag type quotient consists of the same data which is given by the following commutative diagram: 

{\small\begin{equation*}
    \begin{tikzcd}[column sep = 3.9em, row sep = 3.3em]
        Y_3 \arrow[r, "\rho_3"] \arrow[d, "\eta_3"'] & Y_2 \arrow[r, "\rho_2"] \arrow[d, "\eta_2"] & Y_1 \arrow[r, "\rho_1"] \arrow[d, "\eta_1"] & Y_0 \arrow[d, "\eta_0"]               \\
        Y_3^{\vee}                                   & Y_2^{\vee} \arrow[l, "\rho_3^{\vee}"]       & Y_1^{\vee} \arrow[l, "\rho_2^{\vee}"]       & Y_0^{\vee} \arrow[l, "\rho_1^{\vee}"]
    \end{tikzcd}
\end{equation*}}

\noindent The following translation to the language of Dieudonné theory will often be used:

{\small
\begin{equation*}
    \begin{tikzcd}[column sep = 3.9em, row sep = 3.3em]
        M_3                                                         & M_2 \arrow[l, "M(\rho_3)"']                                  & M_1 \arrow[l, "M(\rho_2)"']                                  & M_0 \arrow[l, "M(\rho_1)"']   \\
        M_3^t \arrow[u, "M(\eta_3)"] \arrow[r, "M(\rho_3^{\vee})"'] & M_2^t \arrow[u, "M(\eta_2)"'] \arrow[r, "M(\rho_2^{\vee})"'] & M_1^t \arrow[u, "M(\eta_1)"'] \arrow[r, "M(\rho_1^{\vee})"'] & M_0^t \arrow[u, "M(\eta_0)"']
    \end{tikzcd}
\end{equation*}}

\noindent We begin at the top of the PFTQ, where the structure is fully described by the black-box Theorem \ref{thm:Structure}. In dimension $g=4$, $M_3$ has a skeletal basis $e_1, f_1, e_2, f_2, e_3, f_3, e_4, f_4$, so the Frobenius and Verschiebung relations are given by
\begin{align*}
    Fe_i = f_i, \quad Ve_i = f_i, \quad Ff_i = pe_i, \quad Vf_i = pe_i.
\end{align*}
As shown in Corollary \ref{cor:Structure}, the quasi-polarisation decomposes into two genus-$2$ blocks. With respect to the above skeletal basis, it is thus represented by the following matrix:

{\small
\begin{align*}
\begin{pNiceMatrix}[c, margin=2pt]
    \Block{2-2}<\Large>{0} &    & p^{-2} & 0    & & & & \\
     &                          & 0 & p^{-1}    & & & & \\
    -p^{-2} & 0 & \Block{2-2}<\Large>{0} &      & & & & \\
    0 & -p^{-1} &                        &      & & & & \\
    & & & & \Block{2-2}<\Large>{0}  & & p^{-2} & 0      \\
    & & & &                         & & 0 & p^{-1}      \\
    & & & & -p^{-2} & 0 & \Block{2-2}<\Large>{0} &      \\
    & & & & 0 & -p^{-1} &                        &      \\
    \CodeAfter
    \begin{tikzpicture}
        \draw[line width = 0.2pt, dashed] (3-|1) -- (3-|5) ;
        \draw[line width = 0.2pt, dashed] (1-|3) |- (5-|3) ;
        \draw[line width = 0.4pt] (1-|5) |- (9-|5) ;
        \draw[line width = 0.4pt] (5-|1) -- (5-|9) ;
        \draw[line width = 0.2pt, dashed] (7-|5) -- (7-|9) ;
        \draw[line width = 0.2pt, dashed] (5-|7) |- (9-|7) ;
    \end{tikzpicture}
\end{pNiceMatrix}.
\end{align*}}

We determine $M_2$ by descending one level in the PFTQ. As this is the first step, we can apply the black-box Theorem \ref{thm:firststepPFTQ}. Using this, we obtain
\begin{align*}
    M_2 = \langle \underbrace{\underline{\alpha_1}e_1 + \underline{\alpha_2}e_2 + \underline{\alpha_3}e_3 + \underline{\alpha_4}e_4}_{=:v}, f_1, pe_1, f_2, pe_2, f_3, pe_3, f_4, pe_4\rangle.
\end{align*}

Interpreting the parameters as $[\alpha_1\mathpunct{:}\alpha_2\mathpunct{:}\alpha_3\mathpunct{:}\alpha_4]\in \mathbb{P}^3$ since at least one of them has to be non-zero, the descent criterion imposes the equation 
\begin{align}\customanchor{eq:g4_2}{}
    \boxed{\alpha_1 \alpha_2^{p^2} - \alpha_2\alpha_1^{p^2} + \alpha_3\alpha_4^{p^2} - \alpha_4\alpha_3^{p^2}=0.}
\end{align}

To progress further in the PFTQ, it is essential to understand $M_2$ in greater detail. The most important step is to calculate the $a$-number depending on the parameter: 

\begin{pro}\label{pro:anumberY2}
    The $a$-number of the abelian variety $Y_2$ is given by
    \begin{align*}
    a(Y_2) = \begin{cases}
        4 & \text{if } [\alpha_1 \mathpunct{:}\alpha_2\mathpunct{:}\alpha_3\mathpunct{:}\alpha_4] \in \mathbb{P}^3(\mathbb{F}_{p^2}),\\
        3 & \text{if } [\alpha_1\mathpunct{:}\alpha_2\mathpunct{:}\alpha_3\mathpunct{:}\alpha_4] \notin \mathbb{P}^3(\mathbb{F}_{p^2}).
        \end{cases}
    \end{align*}
\end{pro}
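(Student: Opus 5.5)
We compute $a(Y_2)=a(M_2)=\dim_k(M_2/(F,V)M_2)$ directly, as in Proposition \ref{pro:anumberY0}. We may assume without loss of generality that some coordinate, say $\alpha_1$, equals $1$; the other cases follow by permuting indices, which respects the block structure of the relations $Fe_i=f_i$, $Ve_i=f_i$, $Ff_i=pe_i$, $Vf_i=pe_i$ from Theorem \ref{thm:Structure}(ii). Then $pe_1=pv-\sum_{i\ge 2}\underline{\alpha_i}\,pe_i$ is redundant, and
$$M_2=\langle v, f_1,f_2,f_3,f_4, pe_2,pe_3,pe_4\rangle$$
is a free $W(k)$-basis of rank $8$.

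Next we write down generators of $(F,V)M_2$:
$$Fv=\sum_i\sigma(\underline{\alpha_i})f_i,\quad Vv=\sum_i\sigma^{-1}(\underline{\alpha_i})f_i,\quad Ff_i=Vf_i=pe_i,\quad F(pe_i)=V(pe_i)=pf_i.$$
Modulo $p$, the elements $pe_i$ (for all $i$, including $pe_1$) and $pf_i$ all lie in $(F,V)M_2$. In the basis above, $pe_1$ reduces to $pv$ plus multiples of the $pe_i$ with $i\ge 2$. The quotient therefore has $k$-basis given by $[v]$, $[f_1],\dots,[f_4]$ modulo the span of the two vectors
$$\bigl(\alpha_1^p,\dots,\alpha_4^p\bigr)\quad\text{and}\quad \bigl(\alpha_1^{1/p},\dots,\alpha_4^{1/p}\bigr)$$
in $k^4=\langle [f_1],\dots,[f_4]\rangle$. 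Here we use that $\sigma^{\pm1}$ of a Teichmüller lift reduces to $\alpha^{p^{\pm1}}$. The quotient also contains $[pe_i]$ for $i\ge 2$, but these are killed because $pe_i=Ff_i$.

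Hence $a(M_2)=1+4-\operatorname{rk}$, where $\operatorname{rk}$ is the rank of the two vectors above, and $\operatorname{rk}\in\{1,2\}$. The rank is $1$ exactly when the two vectors are proportional. Applying the $p$-th power map, this is equivalent to $(\alpha_i^{p^2})_i$ being proportional to $(\alpha_i)_i$, that is, $[\alpha_1\mathpunct{:}\dots\mathpunct{:}\alpha_4]$ being fixed by $\sigma^2$. That is precisely the condition $[\alpha_1\mathpunct{:}\dots\mathpunct{:}\alpha_4]\in\mathbb{P}^3(\mathbb{F}_{p^2})$. This gives $a=4$ in that case and $a=3$ otherwise.

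The main care point is bookkeeping in the reduction mod $(F,V)M_2$. We must check that no element of $(F,V)M_2$ contributes a non-zero coefficient on $[v]$. This holds because $p\cdot v$ and all images of $F,V$ lie in $\langle f_i, pe_i\rangle$, so $[v]$ always survives. We must also check that the $[pe_i]$ classes with $i\ge 2$ really die; they do, since $Ff_i=pe_i$. A Smith normal form over $W(k)$ of the $8\times 10$ matrix would confirm the rank count if needed.
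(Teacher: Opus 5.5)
Your proposal is correct and follows essentially the same route as the paper, which proves this by pointing back to the direct computation in Proposition \ref{pro:anumberY0}: normalise $\alpha_1=1$, discard $pe_1$ from the generators, and observe that $M_2/(F,V)M_2$ is $k\cdot[v]$ plus $k^4$ modulo the reductions of $Fv$ and $Vv$, whose rank drops to $1$ exactly on $\mathbb{P}^3(\mathbb{F}_{p^2})$. Your bookkeeping is sound: $pv\in(F,V)M_2$ via $pe_1$, the $v$-coefficient of anything in $(F,V)M_2\subseteq\langle f_i,pe_i\rangle$ lies in $pW(k)$, and with $\alpha_1=1$, proportionality of $(\alpha_i^p)$ and $(\alpha_i^{1/p})$ forces $\alpha_i^{p^2}=\alpha_i$.
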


\begin{proof}
    Calculate $\dim(M_2/(F,V)M_2)$ as in Proposition \ref{pro:anumberY0}.
\end{proof}

\subsubsection{The supergeneral parameter choice}

\noindent In this subsection, we make the supergeneral parameter choice and assume that $[\alpha_1\mathpunct{:}\alpha_2\mathpunct{:}\alpha_3\mathpunct{:}\alpha_4]\notin \mathbb{P}^3(\mathbb{F}_{p^2})$.

\vspace{8pt}

\noindent To calculate $M_1$, we descend one more level in the PFTQ. For this, we proceed in two steps:
\begin{itemize}
    \item[(1)] Establish a candidate for $M_1$ by establishing the short exact sequence
    $$0 \to \frac{M_1}{(F,V)M_2} \to \frac{M_2}{(F,V)M_2} \to \frac{M_2}{M_1} \to 0.$$
    \item[(2)] Find the fiber of the truncation morphism via the descent criterion.
\end{itemize}

\noindent For (1), we can use the third isomorphism theorem to establish the short exact sequence as
\begin{align*}
    \frac{M_2/(F,V)M_2}{M_1/(F,V)M_2} \cong \frac{M_2}{M_1}.
\end{align*}

To gain insight into $M_1$, we calculate the dimensions of the terms in the short exact sequence. Whilst the dimension of the last term is fixed to be $2$ by definition of the PFTQ, the middle term is of dimension $3$ as we choose the parameters generically, see Proposition \ref{pro:anumberY2}. Therefore, we deduce $$\text{dim}_k\left(\tfrac{M_1}{(F,V)M_2}\right)= \text{dim}_k\left(\tfrac{M_2}{(F,V)M_2}\right) - \text{dim}_k\left(\tfrac{M_2}{M_1}\right) = 3-2 = 1.$$
Using the explicit representation of $M_2$ given above, it is an easy calculation to see that $\frac{M_2}{(F,V)M_2} \cong \langle [v], [f_2], [f_4]\rangle,$ and thus $$\frac{M_1}{(F,V)M_2}\cong \langle \alpha_5[v]+\alpha_6[f_2]+\alpha_7[f_4]\rangle \quad \text{for some }[\alpha_5\mathpunct{:}\alpha_6\mathpunct{:}\alpha_7]\in \mathbb{P}^2.$$

\noindent This can be lifted to a generating set of $M_1$ as follows: $$M_1 = \langle \underbrace{\underline{\alpha_5}v + \underline{\alpha_6}f_2 + \underline{\alpha_7}f_4}_{=:w}, FM_2, VM_2\rangle.$$

\noindent For (2), we are in the setup of the descent criterion, and we apply it at level $i = 1$ and thus for $j=0$. Therefore, it suffices to verify that $\langle w,Fw\rangle \in W(k)$. 

To this end, we firstly compute that
{\small
\begin{align*}
    Fw &= F(\underline{\alpha_5}v+\underline{\alpha_6}f_2+\underline{\alpha_7}f_4) = \sigma(\underline{\alpha_5})Fv + \sigma(\underline{\alpha_6})Ff_2 + \sigma(\underline{\alpha_7})Ff_4 \\
    &=\sigma(\underline{\alpha_5})(\sigma(\underline{\alpha_1})f_1 + \sigma(\underline{\alpha_2})f_2+ \sigma(\underline{\alpha_3})f_3 + \sigma(\underline{\alpha_4})f_4)+ \sigma(\underline{\alpha_6})pe_2 + \sigma(\underline{\alpha_7})pe_4.
\end{align*}}

\noindent Using the representation of the quasi-polarisation on $M_3$, we compute
{\small
\begin{align*}
    \langle w, Fw\rangle &= \langle \underline{\alpha_5}(\underline{\alpha_1}e_1 + \underline{\alpha_2}e_2 + \underline{\alpha_3}e_3 + \underline{\alpha_4}e_4) + \underline{\alpha_6}f_2 + \underline{\alpha_7}f_4, \\
    &\quad\quad \sigma(\underline{\alpha_5})(\sigma(\underline{\alpha_1})f_1 + \sigma(\underline{\alpha_2})f_2+ \sigma(\underline{\alpha_3})f_3 + \sigma(\underline{\alpha_4})f_4)+ \sigma(\underline{\alpha_6})pe_2 + \sigma(\underline{\alpha_7})pe_4\rangle \\
    &=\underline{\alpha_5}\underline{\alpha_1}\sigma(\underline{\alpha_6})p\langle e_1,e_2\rangle + \underline{\alpha_5}\underline{\alpha_3}\sigma(\underline{\alpha_7})p\langle e_3, e_4\rangle + \underline{\alpha_6}\sigma(\underline{\alpha_5}\underline{\alpha_1})\langle f_2, f_1\rangle + \underline{\alpha_7}\sigma(\underline{\alpha_5}\underline{\alpha_3})\langle f_4, f_3 \rangle \\
    &= p^{-1}(\underline{\alpha_5}\underline{\alpha_1}\sigma(\underline{\alpha_6}) + \underline{\alpha_5}\underline{\alpha_3}\sigma(\underline{\alpha_7}) - \underline{\alpha_6}\sigma(\underline{\alpha_5}\underline{\alpha_1}) - \underline{\alpha_7}\sigma(\underline{\alpha_5}\underline{\alpha_3})) \in W(k).
\end{align*}}

\noindent As we are working with Teichmüller lifts, this condition is equivalent to 
\begin{align}\customanchor{eq:g4_1_gen}{}
\boxed{\alpha_5(\alpha_1\alpha_6^p + \alpha_3\alpha_7^p - \alpha_6 \alpha_5^{p-1}\alpha_1^p - \alpha_7 \alpha_5^{p-1}\alpha_3^p) = 0,}
\end{align}
thus giving us the fiber of the truncation morphism $\mathcal{V}_{1} \to \mathcal{V}_{2}$.

\begin{rem}
    It is clear that this variety consists of two components, and that the component defined by $\alpha_5 = 0$ is a \emph{garbage component} in the sense of \parencite{LiOort}: This means that this component blows down to a proper closed subset of the moduli space, or similarly, that choosing this parameter gives rise to non-rigid PFTQs. One can easily verify this behaviour:
\end{rem}

\begin{pro}\label{pro:garbagecomponent}
    Choosing $\alpha_5 =0$ yields a non-rigid PFTQ, and therefore it defines a garbage component of the fiber.
\end{pro}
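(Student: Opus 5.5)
The plan is to check the rigidity condition (iv) of Definition~\ref{df:PFTQDieudonne} directly at level $i=2$. We show that it fails whenever $\alpha_5=0$, regardless of how $M_0$ is then chosen.

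\textbf{Step 1: $VM_3=FM_3$.} By Theorem~\ref{thm:Structure}~(ii), on the skeletal basis of $M_3$ we have $Fe_i=Ve_i=f_i$ and $Ff_i=Vf_i=pe_i$. Hence
\[
FM_3=VM_3=\langle f_1,\dots,f_4,pe_1,\dots,pe_4\rangle .
\]

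\textbf{Step 2: $M_1\subseteq FM_3$ when $\alpha_5=0$.} Setting $\alpha_5=0$ gives $w=\underline{\alpha_6}f_2+\underline{\alpha_7}f_4$, which visibly lies in $FM_3$. Since $M_2\subseteq M_3$, we also get $FM_2\subseteq FM_3$ and $VM_2\subseteq VM_3=FM_3$. As $M_1=\langle w,FM_2,VM_2\rangle$, this gives $M_1\subseteq FM_3$. Because the flag is decreasing, $M_0\subseteq M_1\subseteq FM_3$.

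\textbf{Step 3: rigidity fails at $i=2$.} Condition (iv) with $g=4$ and $i=2$ demands $M_2=M_0+F^{g-1-i}M_3=M_0+FM_3$. By Step 2 the right-hand side equals $FM_3$. However, $M_2$ contains
\[
v=\underline{\alpha_1}e_1+\dots+\underline{\alpha_4}e_4 ,
\]
and some $\alpha_i\neq0$. That Teichmüller lift is a unit, so $v$ has a unit coefficient on some $e_i$ and therefore $v\notin\langle f_j,pe_j\rangle=FM_3$. Thus $M_2\neq M_0+FM_3$, and the PFTQ is not rigid.

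The only point needing care is this last membership argument. It amounts to reducing modulo $FM_3$ and using $M_3/FM_3\cong\langle[e_1],\dots,[e_4]\rangle\cong k^4$, the same quotient computation as in the proof of Theorem~\ref{thm:firststepPFTQ}.

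\textbf{Step 4: garbage component.} Steps 2 and 3 hold for every point with $\alpha_5=0$, so this entire component of the fiber of $\mathcal{V}_1\to\mathcal{V}_2$ consists of non-rigid PFTQs. By the correspondence between rigid PFTQs of abelian varieties and of Dieudonné modules, these points do not come from the rigid moduli space $\mathcal{P}'_{g,\eta}$. Proposition~\ref{pro:surjectivequasifiniteMorphism} is formulated only for $\mathcal{P}'_{g,\eta}$, and following Li--Oort such a component maps to a proper closed subset of $\mathcal{S}_4$. It is therefore a garbage component in their sense.
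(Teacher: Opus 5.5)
Your proof is correct and follows the paper's argument. You use $FM_3=VM_3$ for the skeletal basis to get $M_0\subseteq M_1\subseteq FM_3$, so rigidity at $i=2$ would force $M_2=M_0+FM_3=FM_3$, which is contradicted by $v\in M_2\setminus FM_3$. Your Step~2 is in fact more careful than the paper. The paper asserts $M_1=(F,V)M_2$, which cannot hold, since $[w]=\alpha_6[f_2]+\alpha_7[f_4]\neq 0$ in $M_2/(F,V)M_2$ and $\dim_k(M_2/M_1)=2$. You use only the true and sufficient fact $w=\underline{\alpha_6}f_2+\underline{\alpha_7}f_4\in FM_3$.
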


\begin{proof}
    Since we are working with a skeletal basis of $M_3$, it holds that $FM_3 = VM_3$. Choosing $\alpha_5 =0$ gives $M_1 = \langle \underline{\alpha_6}f_2 + \underline{\alpha_7}f_4, FM_2, VM_2\rangle = \langle FM_2, VM_2\rangle.$ Therefore, we obtain that $$M_1 = (F,V)M_2 \subseteq (F,V)M_3 = FM_3,$$ where the inclusion holds as $M_2 \subseteq M_3$ by definition. Consequently, we have $$M_0 + FM_3 \subseteq M_1 + FM_3 \subseteq FM_3.$$

    However, the rigidity condition of PFTQs implies that $M_0 + FM_3 = M_2$. Combining both statements leads to $M_2 \subseteq FM_3$, which obviously does not hold as $v \in M_2 \setminus FM_3$.
\end{proof}

With this, we have fully understood the construction of $M_1$. The last step of the PFTQ is described by the black-box Theorem \ref{thm:laststepPFTQ}, which implies that the fiber of the truncation morphism $\mathcal{V}_0 \to \mathcal{V}_1$ is a $\mathbb{P}^1$-bundle. Consequently, the structure of $M_0$ is determined.

\begin{rem}
    The preceding calculation recovers the description of the generic $a=1$ locus obtained by Viehmann \parencite{Viehmann} in the language of PFTQs. In the following subsection, we extend the calculation beyond this generic case by allowing arbitrary parameter choices and thus describing the complete moduli space. 
\end{rem}

\subsubsection{The non-supergeneral parameter choice}
We return to the point before assuming that the parameter is chosen generically and revisit this situation without imposing conditions on $[\alpha_1\mathpunct{:}\alpha_2\mathpunct{:}\alpha_3\mathpunct{:}\alpha_4]$ this time. To compute $M_1$, we again proceed in two steps, but now use a sequence that is adapted to the situation and thus independent of the $a$-number:
\begin{itemize}
    \item[(1)] Find a candidate for $M_1$ by using the short exact sequence
    \begin{align*}
        0 \to \frac{M_1}{FM_2} \to \frac{M_2}{FM_2} \to \frac{M_2}{M_1} \to 0.
    \end{align*}
    \item[(2)] Apply a generalised version of the descent criterion to find the fiber of the truncation morphism.
\end{itemize}

\noindent For (1), the third isomorphism theorem yields the short exact sequence as $$\frac{M_2/FM_2}{M_1/FM_2} \cong \frac{M_2}{M_1}.$$

We compute the dimension of the first term to obtain information on $M_1$. Whilst the dimension of the last term is fixed to be $2$ by definition of the PFTQ, the middle term is more subtle: By Lemma \ref{lem:PFTQLemma} (ii), its dimension is given by the genus $g = 4$ and is independent of the choice of parameter. Therefore, we compute
$$\text{dim}_k\left(\tfrac{M_1}{FM_2}\right) = \text{dim}_k\left(\tfrac{M_2}{FM_2}\right)-\text{dim}_k\left(\tfrac{M_2}{M_1}\right) = 4-2 = 2.$$

\noindent It is an easy calculation to find four basis elements of $M_2/FM_2$, namely $$\frac{M_2}{FM_2}\cong \langle [\underline{\alpha_1}e_1 + \underline{\alpha_2}e_2 + \underline{\alpha_3}e_3 + \underline{\alpha_4}e_4], [f_2], [f_3], [f_4]\rangle.$$ The two-dimensional subspace $M_1/FM_2$ of $M_2/FM_2$ is thus given by two non-zero elements, and without loss of generality they are of the form
\begin{align*}
    \frac{M_1}{FM_2} = \langle \alpha_5[v] + \alpha_6[f_2] + \alpha_7[f_3] + \alpha_8[f_4], \alpha_9[f_2] + \alpha_{10}[f_3] + \alpha_{11}[f_4]\rangle,
\end{align*}
where
$[\alpha_5\mathpunct{:}\alpha_6\mathpunct{:}\alpha_7\mathpunct{:}\alpha_8]\in \mathbb{P}^3$ and $[\alpha_9\mathpunct{:}\alpha_{10}\mathpunct{:}\alpha_{11}]\in\mathbb{P}^2$. This can be lifted to a generating set of $M_1$ as follows: $$M_1 = \langle \underbrace{\underline{\alpha_5}v + \underline{\alpha_6}f_2 + \underline{\alpha_7}f_3 + \underline{\alpha_8}f_4}_{=:w_1}, \underbrace{\underline{\alpha_9}f_2 + \underline{\alpha_{10}}f_3 + \underline{\alpha_{11}}f_4}_{=:w_2},  FM_2\rangle.$$

\noindent For (2), we want to ensure that the polarisation descends to $M_1$. For this, we can use a version of the generalised descent criterion. As we add two Witt vectors $w_1, w_2$ to the generating set, we have to check the four conditions
\begin{align*}
    \langle w_1, Fw_1\rangle, \langle w_1, Fw_2\rangle, \langle w_2, Fw_1\rangle, \langle w_2, Fw_2\rangle \in W(k).
\end{align*}

\noindent Before examining these conditions, we precompute $Fw_1$ and $Fw_2$:
{\small
\begin{align*}
    Fw_1 \!&=\! F(\underline{\alpha_5}v \! + \! \underline{\alpha_6}f_2 \!+\! \underline{\alpha_7}f_3 \!+\!\underline{\alpha_8}f_4) = \sigma(\underline{\alpha_5})Fv \!+\! \sigma(\underline{\alpha_6})pe_2 \!+\! \sigma(\underline{\alpha_7})pe_3 \!+\! \sigma(\underline{\alpha_8})pe_4 \\
    &=\! \sigma(\underline{\alpha_5})(\sigma(\underline{\alpha_1})f_1 \!+\! \sigma(\underline{\alpha_2})f_2 \!+\! \sigma(\underline{\alpha_3})f_3 \!+\! \sigma(\underline{\alpha_4})f_4) \!+\! \sigma(\underline{\alpha_6})pe_2 \!+\! \sigma(\underline{\alpha_7})pe_3 \!+\! \sigma(\underline{\alpha_8})pe_4, \\[4pt]
    Fw_2 \!&=\! F(\underline{\alpha_9}f_2 + \underline{\alpha_{10}}f_3 + \underline{\alpha_{11}}f_4) = \sigma(\underline{\alpha_9})pe_2 + \sigma(\underline{\alpha_{10}})pe_3 + \sigma(\underline{\alpha_{11}})pe_4.
\end{align*}}

\noindent Using the representation of the quasi-polarisation, the first condition gives
{\small
\begin{align*}
    \langle w_1, Fw_1\rangle &=\underline{\alpha_5\alpha_1}\sigma(\underline{\alpha_6})p\langle e_1, e_2 \rangle + \underline{\alpha_5\alpha_3}\sigma(\underline{\alpha_8})p\langle e_3, e_4\rangle +\underline{\alpha_5\alpha_4}\sigma(\underline{\alpha_7})p\langle e_4, e_3\rangle \\ 
    & \quad \quad + \underline{\alpha_6}\sigma(\underline{\alpha_5\alpha_1})\langle f_2, f_1\rangle + \underline{\alpha_7}\sigma(\underline{\alpha_5\alpha_4})\langle f_3, f_4\rangle + \underline{\alpha_8}\sigma(\underline{\alpha_5\alpha_3})\langle f_4, f_3\rangle \\
    &= p^{-1}(\underline{\alpha_5\alpha_1}\sigma(\underline{\alpha_6}) + \underline{\alpha_5\alpha_3}\sigma(\underline{\alpha_8}) - \underline{\alpha_5\alpha_4}\sigma(\underline{\alpha_7}) \\
    &\quad\quad -\underline{\alpha_6}\sigma(\underline{\alpha_5\alpha_1}) + \underline{\alpha_7}\sigma(\underline{\alpha_5\alpha_4})-\underline{\alpha_8}\sigma(\underline{\alpha_5\alpha_3})) \in W(k).
\end{align*}}

\noindent Similarly, the second condition simplifies to
{\small
\begin{align*}
    \langle w_1, Fw_2\rangle &= \underline{\alpha_5\alpha_1}\sigma(\underline{\alpha_9}) p\langle e_1, e_2\rangle + \underline{\alpha_5\alpha_3}\sigma(\underline{\alpha_{11}})p\langle e_3, e_4\rangle + \underline{\alpha_5\alpha_4}\sigma(\underline{\alpha_{10}})p\langle e_4,e_3\rangle \\
    &= p^{-1}(\underline{\alpha_5\alpha_1}\sigma(\underline{\alpha_9}) + \underline{\alpha_5\alpha_3}\sigma(\underline{\alpha_{11}})-\underline{\alpha_5\alpha_4}\sigma(\underline{\alpha_{10}})) \in W(k).
\end{align*}}

\noindent Computing the third condition, we obtain
{\small
\begin{align*}
    \langle w_2, Fw_1\rangle &= \underline{\alpha_9}\sigma(\underline{\alpha_5\alpha_1})\langle f_2, f_1 \rangle + \underline{\alpha_{10}}\sigma(\underline{\alpha_5\alpha_4})\langle f_3, f_4 \rangle + \underline{\alpha_{11}} \sigma(\underline{\alpha_5\alpha_3})\langle f_4, f_3 \rangle \\
    &= p^{-1}(-\underline{\alpha_9}\sigma(\underline{\alpha_5\alpha_1})+\underline{\alpha_{10}}\sigma(\underline{\alpha_5\alpha_4})-\underline{\alpha_{11}}\sigma(\underline{\alpha_5\alpha_3})) \in W(k).
\end{align*}}

\noindent Finally, the last condition is automatically satisfied:
{\small
\begin{align*}
    \langle w_2, Fw_2 \rangle &= \langle \underline{\alpha_9}f_2 \!+\! \underline{\alpha_{10}}f_3 \!+\! \underline{\alpha_{11}}f_4, \sigma(\underline{\alpha_9})pe_2 \!+\! \sigma(\underline{\alpha_{10}})pe_3 \!+\! \sigma(\underline{\alpha_{11}})pe_4\rangle \!=\! 0 \in W(k).
\end{align*}} 

To satisfy the three non-trivial conditions, the coefficients of $p^{-1}$ have to vanish. Using Teichmüller lifts, this is equivalent to the three equations
\begin{equation}
\boxed{\customanchor{eq:g4_1_arb}{}
\begin{gathered}
    \alpha_5(\alpha_1\alpha_6^p + \alpha_3\alpha_8^p-\alpha_4 \alpha_7^p - \alpha_6\alpha_5^{p-1}\alpha_1^p + \alpha_7\alpha_5^{p-1}\alpha_4^p - \alpha_8\alpha_5^{p-1}\alpha_3^p) = 0, \\
    \alpha_5(\alpha_1\alpha_9^p+ \alpha_3 \alpha_{11}^p - \alpha_4\alpha_{10}^p) = 0, \\
    \alpha_5^p(-\alpha_9\alpha_1 ^p + \alpha_{10}\alpha_4^p-\alpha_{11}\alpha_3^p) =0,
\end{gathered}}
\end{equation}
and these give the fiber of the truncation morphism $\mathcal{V}_1 \to \mathcal{V}_2$. Comparing with the supergeneral parameter choice, we make two remarks:

\begin{rem}
We can recover the equation derived for the supergeneral parameter choice from the above equations: Assuming $[\alpha_1\mathpunct{:}\alpha_2\mathpunct{:}\alpha_3\mathpunct{:}\alpha_4] \notin \mathbb{P}^3(\mathbb{F}_{p^2})$, we may choose $w_2 = (F-V)v\neq 0.$ Without loss of generality, we assume that $\alpha_1 = 1$. Then, we see that $$\alpha_9 = \sigma(\alpha_2)-\sigma^{-1}(\alpha_2), \alpha_{10} = \sigma(\alpha_3)-\sigma^{-1}(\alpha_3), \alpha_{11} = \sigma(\alpha_4)-\sigma^{-1}(\alpha_4).$$

\noindent Using these identities, the second equation is automatically satisfied:
\begin{align*}
    \alpha_1\alpha_9^p + \alpha_3\alpha_{11}^p-\alpha_4\alpha_{10}^p &= \alpha_1(\alpha_2^{p^2}-\alpha_2) + \alpha_3(\alpha_4^{p^2}-\alpha_4) -\alpha_4(\alpha_3^{p^2}-\alpha_3) \\
    &= \alpha_1\alpha_2^{p^2}-\alpha_1\alpha_2 + \alpha_3\alpha_4^{p^2}-\alpha_4\alpha_3^{p^2} \\
    &= \alpha_1 \alpha_2^{p^2}-\alpha_1\alpha_2 -\alpha_1\alpha_2^{p^2}+\alpha_1^{p^2}\alpha_2 = 0,
\end{align*}
where we used the identity imposed by the descent criterion on the parameters during descent from $M_3$ to $M_2$, as well as the fact that $\alpha_1 = 1$. A similar computation shows that the third equation is also automatically satisfied. Therefore, we are left with the first equation, which is a globalised version of the fiber equation for the supergeneral parameter choice.
\end{rem}

\begin{rem}
    The same argument as in the supergeneral case shows that the component given by $\alpha_5=0$ leads to non-rigid PFTQs, and thus defines a garbage component of the fiber. 
\end{rem}

The last step of the PFTQ is described by the black-box Theorem \ref{thm:laststepPFTQ} as a $\mathbb{P}^1$-bundle over $\mathcal{V}_1$, consequently determining the structure of $M_0$. 

\subsection{Dimension \texorpdfstring{$g=5$}{g=5}}\label{sec:g5}

In this section, we calculate the polarised flag type quotients in dimension $g=5$ for generic and arbitrary parameter choices. Similar to the previous section, we provide both calculations as the generic parameter choice is of interest in its own right and provides an accessible entry point to the arbitrary case. In both cases, the data of a $5$-dimensional polarised flag type quotient can be represented by the commutative diagram:

{\small
\begin{equation*}
    \begin{tikzcd}[column sep = 3.9em, row sep = 3.3em]
        Y_4 \arrow[r, "\rho_4"] \arrow[d, "\eta_4"'] & Y_3 \arrow[r, "\rho_3"] \arrow[d, "\eta_3"] & Y_2 \arrow[r, "\rho_2"] \arrow[d, "\eta_2"] & Y_1 \arrow[r, "\rho_1"] \arrow[d, "\eta_1"] & Y_0 \arrow[d, "\eta_0"]               \\
        Y_4^{\vee}                                   & Y_3^{\vee} \arrow[l, "\rho_4^{\vee}"]       & Y_2^{\vee} \arrow[l, "\rho_3^{\vee}"]       & Y_1^{\vee} \arrow[l, "\rho_2^{\vee}"]       & Y_0^{\vee} \arrow[l, "\rho_1^{\vee}"]
\end{tikzcd}
\end{equation*}}

\noindent This diagram can be translated into the language of Dieudonné theory: 

{\small
\begin{equation*}
    \begin{tikzcd}[column sep = 3.9em, row sep = 3.3em]
        M_4                                                         & M_3 \arrow[l, "M(\rho_4)"']                                  & M_2 \arrow[l, "M(\rho_3)"']                                  & M_1 \arrow[l, "M(\rho_2)"']                                  & M_0 \arrow[l, "M(\rho_1)"']   \\
        M_4^t \arrow[u, "M(\eta_4)"] \arrow[r, "M(\rho_4^{\vee})"'] & M_3^t \arrow[u, "M(\eta_3)"'] \arrow[r, "M(\rho_3^{\vee})"'] & M_2^t \arrow[u, "M(\eta_2)"'] \arrow[r, "M(\rho_2^{\vee})"'] & M_1^t \arrow[u, "M(\eta_1)"'] \arrow[r, "M(\rho_1^{\vee})"'] & M_0^t \arrow[u, "M(\eta_0)"']
\end{tikzcd}
\end{equation*}}

\vspace{8pt}

\noindent We begin at the top of the PFTQ which is fully described by Theorem \ref{thm:Structure}: In particular, $M_4$ has a non-skeletal basis $e_1, f_1, \ldots, e_5, f_5$ subject to the Frobenius-Verschiebung relations
\begin{align*}
    Fe_i = f_i, \quad Ve_i = -f_i, \quad Ff_i = -pe_i, \quad Vf_i = pe_i.
\end{align*}

\noindent In this dimension, the quasi-polarisation decomposes into five genus-$1$ blocks, see Corollary \ref{cor:Structure}. With respect to the basis, it is represented by the matrix

{\small
\begin{align*}
\begin{pNiceMatrix}[c, margin=2pt]
    0 & p^{-2} & & & & & & & & \\
    -p^{-2} & 0 & & & & & & & & \\
    & & 0 & p^{-2} & & & & & & \\
    & & -p^{-2} & 0 & & & & & & \\
    & & & & 0 & p^{-2} & & & & \\
    & & & & -p^{-2} & 0 & & & & \\
    & & & & & & 0 & p^{-2} & & \\
    & & & & & & -p^{-2} & 0 & & \\
    & & & & & & & & 0 & p^{-2} \\
    & & & & & & & & -p^{-2} & 0 \\
    \CodeAfter
    \begin{tikzpicture}
        \draw (3-|1) -- (3-|5) ;
        \draw (5-|3) -- (5-|7) ;
        \draw (7-|5) -- (7-|9) ;
        \draw (9-|7) -- (9-|11);
        \draw (1-|3) |- (5-|3) ;
        \draw (3-|5) |- (7-|5) ;
        \draw (5-|7) |- (9-|7) ;
        \draw (7-|9) |- (11-|9);
    \end{tikzpicture}
\end{pNiceMatrix}.
\end{align*}}

To determine $M_3$, we descend one level in the PFTQ. As this is the first step, we can apply the black-box Theorem \ref{thm:firststepPFTQ}, which implies that $$M_3 = \langle \underbrace{\underline{\alpha_1}e_1 + \underline{\alpha_2}e_2 + \underline{\alpha_3}e_3 + \underline{\alpha_4}e_4 + \underline{\alpha_5}e_5}_{=:v}, FM_4, VM_4\rangle.$$ 
Interpreting the parameters as $[\alpha_1\mathpunct{:}\alpha_2\mathpunct{:}\alpha_3\mathpunct{:}\alpha_4\mathpunct{:}\alpha_5]\in\mathbb{P}^4$, the descent criterion imposes the equations
\begin{equation}
\boxed{\customanchor{eq:g5_3}{}\begin{gathered}
    \alpha_1^{p^3+1} + \alpha_2^{p^3+1} + \alpha_3^{p^3+1} + \alpha_4^{p^3+1} + \alpha_5^{p^3+1} = 0, \\
    \alpha_1^{p+1} + \alpha_2^{p+1} + \alpha_3^{p+1} + \alpha_4^{p+1} + \alpha_5^{p+1} = 0.
\end{gathered}}
\end{equation}
In order to progress down the PFTQ with the usual two-step approach, we need to calculate the $a$-number of $M_3$ depending on the parameters:

\begin{pro}\label{pro:anumberY3}
    The $a$-number of the abelian variety $Y_3$ is given by
    \begin{align*}
    a(Y_3) = 
        \begin{cases}
            5 & \text{if } [\alpha_1\mathpunct{:}\alpha_2\mathpunct{:}\alpha_3\mathpunct{:}\alpha_4\mathpunct{:}\alpha_5] \in \mathbb{P}^4(\mathbb{F}_{p^2}), \\
            4 & \text{if } [\alpha_1\mathpunct{:}\alpha_2\mathpunct{:}\alpha_3\mathpunct{:}\alpha_4\mathpunct{:}\alpha_5] \notin \mathbb{P}^4(\mathbb{F}_{p^2}).
        \end{cases}
    \end{align*}
\end{pro}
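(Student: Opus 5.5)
The plan is to follow the hands-on computation of Proposition \ref{pro:anumberY0}, using Proposition \ref{pro:anumber} to write $a(Y_3)=a(M_3)=\dim_k\bigl(M_3/(F,V)M_3\bigr)$. The Frobenius and Verschiebung relations of the non-skeletal basis from Theorem \ref{thm:Structure}~(i) make this a linear-algebra problem over $k$. The descent equations (\ref{eq:g5_3}) play no role here. The case split is governed only by the rationality of the parameter point.

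First I would fix a $W(k)$-basis of $M_3$. Since $[\alpha_1\mathpunct{:}\ldots\mathpunct{:}\alpha_5]$ is a projective point, after permuting indices and rescaling I may assume $\alpha_1=1$. Then $pe_1 = pv-\sum_{i\ge 2}\underline{\alpha_i}\,pe_i$, so $pe_1$ is redundant as a generator. This gives
\[
M_3=\langle v, f_1,\ldots,f_5, pe_2,\ldots,pe_5\rangle .
\]

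Next I would compute $(F,V)M_3$ by applying $F$ and $V$ to these generators:
\begin{itemize}
\item $Ff_i=-pe_i$ and $Vf_i=pe_i$;
\item $F(pe_i)=pf_i$ and $V(pe_i)=-pf_i$;
\item $Fv=\sum_i\sigma(\underline{\alpha_i})f_i$ and $Vv=-\sum_i\sigma^{-1}(\underline{\alpha_i})f_i$.
\end{itemize}
Hence
\[
(F,V)M_3=\Bigl\langle \sum_i\sigma(\underline{\alpha_i})f_i,\ \sum_i\sigma^{-1}(\underline{\alpha_i})f_i,\ pe_1,\ldots,pe_5,\ pf_1,\ldots,pf_5\Bigr\rangle .
\]

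Passing to the quotient, $(F,V)M_3$ contains $pM_3$, since it contains all $pe_i$ and $pf_i$, and $pv$ is a combination of the $pe_i$. So $M_3/(F,V)M_3$ is a quotient of $M_3/pM_3$. In $M_3/(F,V)M_3$ the classes $[pe_2],\ldots,[pe_5]$ vanish. What remains is the $6$-dimensional $k$-space spanned by $[v],[f_1],\ldots,[f_5]$, modulo the span of two vectors lying in the $f$-part, namely $(\alpha_1^{p},\ldots,\alpha_5^{p})$ and $(\alpha_1^{1/p},\ldots,\alpha_5^{1/p})$. These two vectors are nonzero because the $\alpha_i$ are not all zero.

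The main point to check carefully is that $[v],[f_1],\ldots,[f_5]$ really are $k$-linearly independent in $M_3/pM_3$. This holds because $v\notin\langle f_i, pe_i\rangle$: its $e_1$-coefficient is the unit $1$. Granting this, the $a$-number equals $6$ minus the rank of the two vectors above. The two vectors are proportional if and only if $(\alpha_i^{p^2})\sim(\alpha_i)$ after applying $\sigma$, i.e.\ if and only if $[\alpha_1\mathpunct{:}\ldots\mathpunct{:}\alpha_5]\in\mathbb{P}^4(\mathbb{F}_{p^2})$. In that case the rank is $1$ and $a(Y_3)=5$; otherwise the rank is $2$ and $a(Y_3)=4$, as claimed.
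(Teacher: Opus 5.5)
Your proposal is correct and is essentially the paper's argument: the paper just says to compute $\dim_k\bigl(M_3/(F,V)M_3\bigr)$ as in Proposition~\ref{pro:anumberY0}, i.e.\ normalise $\alpha_1=1$, discard $pe_1$, and decide whether the images of $Fv$ and $Vv$ in the $f$-part are proportional. That is exactly what you do. The linear independence of $[v],[f_1],\ldots,[f_5]$ in $M_3/pM_3$ follows directly from your ten generators forming a $W(k)$-basis of the rank-$10$ module $M_3$, so no separate $e_1$-coefficient argument is needed.
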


\begin{proof}
    Compute $\dim(M_3/(F,V)M_3)$ analogously to Proposition \ref{pro:anumberY0}.
\end{proof}

\subsubsection{The supergeneral parameter choice}

\noindent Similar to the generic calculations in lower dimensions, we now assume that 
$[\alpha_1\mathpunct{:}\alpha_2\mathpunct{:}\alpha_3\mathpunct{:}\alpha_4\mathpunct{:}\alpha_5] \notin \mathbb{P}^4(\mathbb{F}_{p^2})$. To descend one level down the PFTQ to $M_2$, we proceed in two steps:
\begin{itemize}
    \item[(1)] Establish a candidate for $M_2$ by establishing the short exact sequence
    \begin{align*}
        0 \to \frac{M_2}{(F,V)M_3} \to \frac{M_3}{(F,V)M_3} \to \frac{M_3}{M_2} \to 0.
    \end{align*}
    \item[(2)] Find the fiber of the truncation morphism via the descent criterion.
\end{itemize}

\vspace{4pt}

\noindent For (1), the short exact sequence arises from the third isomorphism theorem: $$\frac{M_3/(F,V)M_3}{M_2/(F,V)M_3} \cong \frac{M_3}{M_2}.$$

\noindent Having established this short exact sequence, the next step is to calculate the dimensions of the terms. We note that the middle term has dimension $4$ by the generic choice of the parameter, and the dimension of the last term is fixed to be $3$ by definition of the PFTQ. Therefore, we find that
\begin{align*}
    \text{dim}_k\left(\tfrac{M_2}{(F,V)M_3}\right) = \text{dim}_k\left(\tfrac{M_3}{(F,V)M_3}\right)-\text{dim}_k\left(\tfrac{M_3}{M_2}\right) = 4-3 = 1.
\end{align*}

\noindent Using the representation of $M_3$ given above, it is easy to verify that $\frac{M_3}{(F,V)M_3}\cong \langle [v], [f_3], [f_4], [f_5]\rangle$, and therefore the one-dimensional subspace is given by $$\frac{M_2}{(F,V)M_3} = \langle \alpha_6[v] + \alpha_7[f_3] + \alpha_8[f_4] + \alpha_9[f_5]\rangle,$$ where $[\alpha_6\mathpunct{:}\alpha_7\mathpunct{:}\alpha_8\mathpunct{:}\alpha_9] \in \mathbb{P}^3.$ This can be lifted to a generating set of $M_2$:
\begin{align*}
    M_2 = \langle \underbrace{\underline{\alpha_6}v + \underline{\alpha_7}f_3 + \underline{\alpha_8}f_4 + \underline{\alpha_9}f_5}_{=:w}, FM_3, VM_3\rangle.
\end{align*}

\noindent For (2), we want to ensure that the polarisation descends to $M_2$ and hence apply the descent criterion at level $i=2$. Thus, we have to check that $$\langle w, F^2w\rangle \in W(k), \quad \langle w, FVw\rangle  \in W(k).$$

\noindent We firstly precompute the image of $w$ under the square of the Frobenius:
{\small
\begin{align*}
    F^2w &= \sigma^2(\underline{\alpha_6})F^2v+ \sigma^2(\underline{\alpha_7})F^2f_3 + \sigma^2(\underline{\alpha_8})F^2f_4 + \sigma^2(\underline{\alpha_9})F^2f_5 \\
    &= -p(\sigma^2(\underline{\alpha_6\alpha_1})e_1 + \ldots +\sigma^2(\underline{\alpha_6\alpha_5})e_5 + \sigma^2(\underline{\alpha_7})f_3 + \sigma^2(\underline{\alpha_8})f_4 + \sigma^2(\underline{\alpha_9})f_5).
\end{align*}}

\noindent Using the representation of the quasi-polarisation, the first condition gives
{\small
\begin{align*}
    \langle w, F^2w\rangle &= \langle \underline{\alpha_6}(\underline{\alpha_1}e_1 + \underline{\alpha_2}e_2 + \underline{\alpha_3}e_3 + \underline{\alpha_4}e_4 + \underline{\alpha_5}e_5) + \underline{\alpha_7}f_3 + \underline{\alpha_8}f_4 + \underline{\alpha_9}f_5, \\
    &\quad \quad -p(\sigma^2(\underline{\alpha_6\alpha_1})e_1 \!+\! \ldots \!+\!\sigma^2(\underline{\alpha_6\alpha_5})e_5 \!+\! \sigma^2(\underline{\alpha_7})f_3 \!+\! \sigma^2(\underline{\alpha_8})f_4 \!+\! \sigma^2(\underline{\alpha_9})f_5)\rangle \\
    &= -p(\underline{\alpha_6\alpha_3}\sigma^2(\underline{\alpha_7})\langle e_3, f_3 \rangle + \underline{\alpha_6\alpha_4}\sigma^2(\underline{\alpha_8})\langle e_4, f_4 \rangle + \underline{\alpha_6\alpha_5}\sigma^2(\underline{\alpha_9})\langle e_5, f_5 \rangle \\
    &\quad \quad + \underline{\alpha_7}\sigma^2(\underline{\alpha_6\alpha_3})\langle f_3, e_3\rangle + \underline{\alpha_8}\sigma^2(\underline{\alpha_6\alpha_4})\langle f_4, e_4\rangle + \underline{\alpha_9}\sigma^2(\underline{\alpha_6\alpha_5})\langle f_5, e_5\rangle) \\
    &= -p^{-1}(\underline{\alpha_6\alpha_3}\sigma^2(\underline{\alpha_7})+ \underline{\alpha_6\alpha_4}\sigma^2(\underline{\alpha_8})+\underline{\alpha_6\alpha_5}\sigma^2(\underline{\alpha_9}) \\
    &\quad \quad -\underline{\alpha_7}\sigma^2(\underline{\alpha_6\alpha_3}) - \underline{\alpha_8}\sigma^2(\underline{\alpha_6\alpha_4}) - \underline{\alpha_9}\sigma^2(\underline{\alpha_6\alpha_5})) \in W(k).
\end{align*}}

\noindent The second condition is automatically satisfied as the quasi-polarisation is alternating:
$$\langle w, FVw\rangle = \langle w,pw\rangle = p\langle w, w\rangle = 0 \in W(k).$$

\noindent Thus, to satisfy the non-trivial condition, the coefficient of $p^{-1}$ has to vanish. As we are working with Teichmüller lifts, this is equivalent to the equation
{\small\begin{align}\customanchor{eq:g5_2}{}
    \boxed{\alpha_6\!\left(\!-\alpha_3\alpha_7^{p^2}\!-\!\alpha_4\alpha_8^{p^2}\!-\!\alpha_5\alpha_9^{p^2} \!+\! \alpha_7\alpha_6^{p^2-1}\alpha_3^{p^2} \!+\! \alpha_8\alpha_6^{p^2-1}\alpha_4^{p^2}\!+\!\alpha_9\alpha_6^{p^2-1}\alpha_5^{p^2}\right) \!=\! 0,}
\end{align}}
which gives the fiber of the truncation morphism $\mathcal{V}_2\to\mathcal{V}_3$.

\begin{rem}
    By the same argument as in dimension $g=4$, choosing $\alpha_6 = 0$ gives rise to a non-rigid PFTQ and thus is a garbage component.
\end{rem}

We now turn to analysing $M_2$ by calculating its $a$-number. As we choose the parameters generically, we only compute it under this assumption:

\begin{pro}\label{pro:anumberM2}
Assume that the parameters are chosen generically, i.e.\ $[\alpha_1\mathpunct{:}\alpha_2\mathpunct{:}\alpha_3\mathpunct{:}\alpha_4\mathpunct{:}\alpha_5] \notin \mathbb{P}^4(\mathbb{F}_{p^2})$ and $[\alpha_6\mathpunct{:}\alpha_7\mathpunct{:}\alpha_8\mathpunct{:}\alpha_9] \notin \mathbb{P}^3(\mathbb{F}_{p^2})$. Then, it holds that $a(M_2) = 3$ if $\alpha_6 \neq 0$ (e.g.\ if the PFTQ is rigid).
\end{pro}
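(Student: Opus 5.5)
We compute $a(M_2)=\dim_k\bigl(M_2/(F,V)M_2\bigr)$ directly from the explicit generators $M_2=\langle w, FM_3, VM_3\rangle$ with $w=\underline{\alpha_6}v+\underline{\alpha_7}f_3+\underline{\alpha_8}f_4+\underline{\alpha_9}f_5$, in the same way as in Proposition \ref{pro:anumberY0}. The first step is to produce a $W(k)$-basis of $M_2$ adapted to the relations of Theorem \ref{thm:Structure}~(i). Since $\alpha_6\neq 0$, we rescale so that $\alpha_6=1$. Since $(\alpha_1,\dots,\alpha_5)$ is not $\mathbb{F}_{p^2}$-rational, we may also assume $\alpha_1=1$ after reordering. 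One checks that $(F,V)M_3=\langle f_i, pe_i\rangle+\langle (F-V)\text{-images of }v\rangle$. Concretely, $Fv=\sum\sigma(\underline{\alpha_i})f_i$ and $Vv=-\sum\sigma^{-1}(\underline{\alpha_i})f_i$, together with $f_i,pe_i$ from $FM_4$. We write $M_3$ as spanned by $v,f_1,\dots,f_5,pe_2,\dots,pe_5$, discarding $pe_1$ as in the $g=2$ case. Then $(F,V)M_3$ is spanned by $Fv$, $Vv$, $pf_i$ and $pe_i$, and modulo these the classes $[f_3],[f_4],[f_5]$ survive together with $[v]$, consistent with $a(M_3)=4$. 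From this we write $M_2$ with explicit generators $w$, $Fv$, $Vv$, $pe_i$, $pf_i$, $pv$, $pf_3,pf_4,pf_5$.

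Next we compute $(F,V)M_2$ by applying $F$ and $V$ to each of these generators. The resulting module is spanned by
\begin{itemize}
\item[(a)] $Fw$ and $Vw$, which are $\sigma^{\pm1}(\underline{\alpha_6})(\pm)$-combinations of the $f_i$ coming from $v$, plus $p$-multiples of $e_3,e_4,e_5$;
\item[(b)] $F^2v$, $FVv=pv$, $V^2v$, which are $p$-multiples of combinations of the $e_i$ with coefficients $\sigma^{\pm2}(\underline{\alpha_i})$;
\item[(c)] $p$-multiples $pf_i$ and $p^2e_i$.
\end{itemize}
We then form the matrix expressing these generators in terms of a basis of $M_2$ and compute its Smith normal form, or equivalently its rank modulo $p$ after the appropriate rescaling. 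The quotient is a $k$-vector space by Lemma \ref{lem:PFTQLemma}. Its candidate basis consists of $[w]$, the classes of $Fv$ and $Vv$, and the classes $[pe_i]$ modulo the span of $F^2v$ and $V^2v$. The claim $a=3$ amounts to the following count: $[w]$ survives, and among the span of $Fv,Vv$ and the $pe_i$ exactly two further independent classes remain modulo $Fw, Vw, F^2v, V^2v$.

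The main obstacle is verifying the exact rank of this mod-$p$ matrix, i.e.\ that the relations $Fw,Vw$ (the leading parts $\sigma(\underline{\alpha_6})Fv$ and $-\sigma^{-1}(\underline{\alpha_6})Vv$) and $F^2v,V^2v$ cut down exactly as claimed. Here the hypothesis $\alpha_6\neq0$ ensures that $Fw$ and $Vw$ hit the span of $Fv,Vv$ nontrivially. The non-rationality of $(\alpha_1{:}\dots{:}\alpha_5)$ makes $Fv$ and $Vv$ independent modulo $p$, which kills both classes. Similarly, non-rationality of $(\alpha_6{:}\dots{:}\alpha_9)$ and of the $\alpha_i$ over $\mathbb{F}_{p^4}$ makes $F^2v$ and $V^2v$ independent, removing two of the five $[pe_i]$. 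I would handle this step by reducing the relevant $2\times2$ minors to determinants such as $\alpha_i^{p}\alpha_j^{p^{-1}}-\alpha_j^{p}\alpha_i^{p^{-1}}$ and their squared-Frobenius analogues. Their nonvanishing follows from the genericity assumptions, using the equations (\ref{eq:g5_3}) to eliminate degenerate cases. The resulting count $1+(5-2-1)=3$ is obtained after also accounting for $pv\in pe$-span. Finally, rigidity implies $\alpha_6\neq0$ by the garbage-component remark.
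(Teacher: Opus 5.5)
Your reduction is the same as the paper's: normalise $\alpha_1=\alpha_6=1$, use the basis $w,Fv,Vv,pe_2,\dots,pe_5,pf_3,pf_4,pf_5$ of $M_2$, and compute the rank of $(F,V)M_2$ modulo $p$. Your count $1+(5-2-1)$ also has the right shape. Made precise, it reads as follows.

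Modulo $pM_4$ one has $Fw\equiv\sigma(\underline{\alpha_6})Fv$ and $Vw\equiv\sigma^{-1}(\underline{\alpha_6})Vv$; this is where $\alpha_6\neq0$ enters. Hence $M_2=W(k)w+(F,V)M_2+pM_4$. Moreover, $pM_4\cap(F,V)M_2$ is generated by $p^2M_4$, the $pf_i$, and $pv$, $F^2v$, $V^2v$. Therefore $a(M_2)=1+(5-r)$, where $r$ is the rank of the vectors $(\alpha_i)_i$, $(\alpha_i^{p^2})_i$, $(\alpha_i^{p^{-2}})_i$ in $k^5$. These are the reductions of $pv,F^2v,V^2v$ in the $pe_i$-coordinates. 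So $a(M_2)=3$ holds exactly when $r=3$, i.e.\ when $[\alpha_1:\dots:\alpha_5]$ lies on no $\mathbb{F}_{p^2}$-rational line of $\mathbb{P}^4$.

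The gap is your justification of $r=3$, and it cannot be repaired from the hypotheses as stated.

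First, the parameters $\alpha_7,\alpha_8,\alpha_9$ never enter $r$. They only contribute $p$-multiples of $e_3,e_4,e_5$ to $Fw,Vw$, and $pf$-terms that already lie in $(F,V)M_2$. So non-rationality of $[\alpha_6:\dots:\alpha_9]$ is irrelevant.

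Second, non-rationality over $\mathbb{F}_{p^4}$ is not a hypothesis. Even if assumed, it would only make $F^2v$ and $V^2v$ independent of each other, whereas you also need independence from $pv$.

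Third, the first-step equations $\sum_i\alpha_i^{p+1}=\sum_i\alpha_i^{p^3+1}=0$ do not remove the bad locus. Every $\mathbb{F}_{p^2}$-rational line that is totally isotropic for the Hermitian form $\sum_i x_iy_i^p$ lies on both hypersurfaces.

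Concretely, take:
\begin{itemize}
\item[(i)] $\epsilon\in\mathbb{F}_{p^2}$ with $\epsilon^{p+1}=-1$ and $t\notin\mathbb{F}_{p^4}$;
\item[(ii)] $(\alpha_1,\dots,\alpha_5)=(1,t,\epsilon,\epsilon t,0)$;
\item[(iii)] $\alpha_6=1$, $\alpha_7=\alpha_8=0$ and $\alpha_9\notin\mathbb{F}_{p^2}$.
\end{itemize}
Both first-step equations and the level-$2$ descent equation hold, and all stated hypotheses are satisfied. Yet $F^2v+pv$ and $V^2v+pv$ are both unit multiples of $p(e_2+\underline{\epsilon}e_4)$, so $r=2$ and $a(M_2)=4$.

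The paper's own proof passes over the same point, writing ``by using the generic parameter choices'' in the Smith normal form step. To obtain a correct argument you must add the Zariski-open hypothesis that $[\alpha_1:\dots:\alpha_5]$ lies on no $\mathbb{F}_{p^2}$-rational line. With that hypothesis your count gives $1+(5-3)=3$.
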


\begin{proof}
Recall that the $a$-number of $M_2$ is given by $\text{dim}_k\left(M_2/(F,V)M_2\right)$. In order to compute the dimension of this quotient space, we firstly need to find a basis of $M_2$. By the above construction, we have a generating set of $M_2$ that is given as follows:
$$M_2 = \langle \underbrace{\underline{\alpha_6}v + \underline{\alpha_7}f_3 + \underline{\alpha_8}f_4 + \underline{\alpha_9}f_5}_{=w}, Fv, Vv, pe_1, \ldots, pe_5, pf_1, \ldots, pf_5\rangle.$$

We may assume without loss of generality that $\alpha_1 = 1$ as the parameters $\alpha_1, \ldots, \alpha_5$ are completely symmetric. Moreover, $\alpha_6 \neq 0$ and thus we may further assume that $\alpha_6 = 1$ without loss of generality. We now show that we can discard three generators from the above generating set of $M_2$:
{\small\begin{align*}
    pe_1 \!&=\! pw - \underline{\alpha_2}pe_2 - \underline{\alpha_3}pe_3 - \underline{\alpha_4}pe_4 - \underline{\alpha_5}pe_5 - \underline{\alpha_7}pf_3 - \underline{\alpha_8}pf_4 - \underline{\alpha_9}pf_5, \\
    pf_1 \!&=\! pFv - \sigma(\underline{\alpha_2})pf_2-\sigma(\underline{\alpha_3})pf_3 - \sigma(\underline{\alpha_4})pf_4 - \sigma(\underline{\alpha_5})pf_5, \\
    pf_2 \!&=\! \tfrac{p}{\sigma(\underline{\alpha_2})-\sigma^{-1\!}(\underline{\alpha_2})}(Fv\!+\!Vv) \!-\! \tfrac{\sigma(\underline{\alpha_3})-\sigma^{-1\!}(\underline{\alpha_3})}{\sigma(\underline{\alpha_2})-\sigma^{-1\!}(\underline{\alpha_2})}pf_3 \!-\! \tfrac{\sigma(\underline{\alpha_4})-\sigma^{-1\!}(\underline{\alpha_4})}{\sigma(\underline{\alpha_2})-\sigma^{-1\!}(\underline{\alpha_2})}pf_4 \!-\! \tfrac{\sigma(\underline{\alpha_5})-\sigma^{-1\!}(\underline{\alpha_5})}{\sigma(\underline{\alpha_2})-\sigma^{-1\!}(\underline{\alpha_2})}pf_5,
\end{align*}}
where we used the genericity assumption $\alpha_2 \notin \mathbb{F}_{p^2}$. By these computations, the following is a basis of $M_2$ as a $W(k)$-module of rank $10$:
\begin{align*}
    M_2 = \langle w, Fv, Vv, pe_2, pe_3, pf_3, pe_4, pf_4, pe_5, pf_5\rangle.
\end{align*}

\noindent With this basis, the quotient algorithmically simplifies via the Smith normal form by using the generic parameter choices:
\begin{align*}
    \frac{M_2}{(F,V)M_2} \cong \langle [w], [pe_4], [pe_5]\rangle \cong k^3,
\end{align*}
which gives the claimed $a$-number and concludes the proof.
\end{proof}

\clearpage

\noindent We compute $M_1$ by descending one level down the PFTQ in two steps:
\begin{itemize}
    \item[(1)] Establish a candidate for $M_1$ by establishing a short exact sequence $$0 \to \frac{M_1}{(F,V)M_2} \to \frac{M_2}{(F,V)M_2} \to \frac{M_2}{M_1} \to 0.$$
    \item[(2)] Find the fiber of the truncation morphism via the descent criterion.
\end{itemize}

\vspace{4pt}

\noindent For (1), it is clear that the short exact sequence follows directly from the third isomorphism theorem by considering $$\frac{M_2/(F,V)M_2}{M_1/(F,V)M_2} \cong \frac{M_2}{M_1}.$$

\noindent Examining the dimensions of the terms in the sequence, we note that the middle term is of dimension $3$ by the previous proposition. Furthermore, the last term is of dimension $2$ by definition of the PFTQ. Thus, we obtain $$\text{dim}_k\left(\tfrac{M_1}{(F,V)M_2}\right) = \text{dim}_k\left(\tfrac{M_2}{(F,V)M_2}\right)- \text{dim}_k\left(\tfrac{M_2}{M_1}\right) = 3-2 = 1.$$

\noindent In the proof of Proposition \ref{pro:anumberM2}, we have seen that $\frac{M_2}{(F,V)M_2} \cong \langle [w], [pe_4], [pe_5]\rangle$. Therefore, the one-dimensional subspace is given by $$\frac{M_1}{(F,V)M_2} = \langle \alpha_{10}[w] + \alpha_{11}[pe_4] + \alpha_{12}[pe_5]\rangle,$$ 
where $[\alpha_{10}\mathpunct{:}\alpha_{11}\mathpunct{:}\alpha_{12}]\in \mathbb{P}^2.$ This can be lifted to a generating set of $M_1$:
$$M_1 = \langle \underbrace{\underline{\alpha_{10}}w + \underline{\alpha_{11}}pe_4 + \underline{\alpha_{12}}pe_5}_{=:u}, FM_2, VM_2\rangle.$$

\noindent For (2), we guarantee descent of the quasi-polarisation by applying the descent criterion at level $i=1$. Since this fixes $j=0$, we only verify that $$\langle u, Fu \rangle \in W(k).$$

\noindent Before examining this condition, we firstly precompute both $u$ and $Fu$:
{\small
\begin{align*}
    u &= \underline{\alpha_{10}\alpha_6\alpha_1}e_1 + \underline{\alpha_{10}\alpha_6\alpha_2}e_2 + \underline{\alpha_{10}\alpha_6\alpha_3}e_3 + (\underline{\alpha_{10}\alpha_6\alpha_4} + \underline{\alpha_{11}}p)e_4 \\
    &\quad + (\underline{\alpha_{10}\alpha_6\alpha_5} + \underline{\alpha_{12}}p)e_5 + \underline{\alpha_{10}\alpha_7}f_3 + \underline{\alpha_{10}\alpha_8}f_4 + \underline{\alpha_{10}\alpha_9}f_5,\\
    Fu &= \sigma(\underline{\alpha_{10}\alpha_6\alpha_1})f_1 + \sigma(\underline{\alpha_{10}\alpha_6\alpha_2})f_2 + \sigma(\underline{\alpha_{10}\alpha_6\alpha_3})f_3 + \sigma(\underline{\alpha_{10}\alpha_6\alpha_4}+\underline{\alpha_{11}}p)f_4 \\
    &\quad+ \sigma(\underline{\alpha_{10}\alpha_6\alpha_5}+\underline{\alpha_{12}}p)f_5 -\sigma(\underline{\alpha_{10}\alpha_7})pe_3 - \sigma(\underline{\alpha_{10}\alpha_8})pe_4 -\sigma(\underline{\alpha_{10}\alpha_9})pe_5.
\end{align*}}

\noindent Using the representation of the quasi-polarisation, we simplify the condition:
{\small
\begin{align*}
    \langle u, Fu \rangle \!&=\!\underline{\alpha_{10}\alpha_6\alpha_1}\sigma(\underline{\alpha_{10}\alpha_6\alpha_1})\langle e_1, f_1\rangle + \underline{\alpha_{10}\alpha_6\alpha_2}\sigma(\underline{\alpha_{10}\alpha_6\alpha_2})\langle e_2, f_2\rangle \\
    &\quad + \underline{\alpha_{10}\alpha_6\alpha_3}\sigma(\underline{\alpha_{10}\alpha_6\alpha_3})\langle e_3, f_3\rangle + (\underline{\alpha_{10}\alpha_6\alpha_4}\mathord{+}\underline{\alpha_{11}}p)\sigma(\underline{\alpha_{10}\alpha_6\alpha_4}\mathord{+}\underline{\alpha_{11}}p)\langle e_4, f_4\rangle \\
    &\quad + (\underline{\alpha_{10}\alpha_6\alpha_5}\mathord{+}\underline{\alpha_{12}}p)\sigma(\underline{\alpha_{10}\alpha_6\alpha_5}\mathord{+}\underline{\alpha_{12}}p)\langle e_5, f_5\rangle -\underline{\alpha_{10}\alpha_7}\sigma(\underline{\alpha_{10}\alpha_7})p\langle f_3, e_3\rangle \\
    &\quad - \underline{\alpha_{10}\alpha_8}\sigma(\underline{\alpha_{10}\alpha_8})p\langle f_4, e_4\rangle - \underline{\alpha_{10}\alpha_9}\sigma(\underline{\alpha_{10}\alpha_9})p\langle f_5, e_5\rangle \\
    &=\! \underline{\alpha_{11}}\sigma(\underline{\alpha_{11}}) \! +\!\underline{\alpha_{12}}\sigma(\underline{\alpha_{12}}) \!+\!
    p^{-1\!} \!\left[\underline{\alpha_{10}\alpha_7}\sigma(\underline{\alpha_{10}\alpha_7}) \!+\!\underline{\alpha_{10}\alpha_8}\sigma(\underline{\alpha_{10}\alpha_8}) \!+\! \underline{\alpha_{10}\alpha_9}\sigma(\underline{\alpha_{10}\alpha_9}) \right. \\
    &\quad \left. + \underline{\alpha_{10}\alpha_6\alpha_4}\sigma(\underline{\alpha_{11}}) +\underline{\alpha_{11}}\sigma(\underline{\alpha_{10}\alpha_6\alpha_4}) + \underline{\alpha_{10}\alpha_6\alpha_5}\sigma(\underline{\alpha_{12}}) + \underline{\alpha_{12}}\sigma(\underline{\alpha_{10}\alpha_6\alpha_5})\right] \\
    &\quad + p^{-2} \left[\underline{\alpha_{10}\alpha_6\alpha_1}\sigma(\underline{\alpha_{10}\alpha_6\alpha_1}) + \ldots + \underline{\alpha_{10}\alpha_6\alpha_5}\sigma(\underline{\alpha_{10}\alpha_6\alpha_5})\right] \in W(k).
\end{align*}}

\noindent To satisfy this condition, the coefficients of $p^{-2}$ and $p^{-1}$ have to vanish. As we are working with Teichmüller lifts, this is equivalent to the equations:
{\footnotesize\begin{equation}
\boxed{
\begin{gathered}
    \alpha_{10}^{p+1}\alpha_6^{p+1}\left(\alpha_1^{p+1}+\alpha_2^{p+1}+\alpha_3^{p+1}+\alpha_4^{p+1} + \alpha_5^{p+1}\right) = 0, \\
    \!\alpha_{10}\!\left(\alpha_6\alpha_4\alpha_{11}^p\!+\!\alpha_{11}\alpha_{10}^{p-1\!}\alpha_6^p\alpha_4^p \!+\! \alpha_6\alpha_5\alpha_{12}^p \!+\! \alpha_{12}\alpha_{10}^{p-1\!}\alpha_6^p\alpha_5^p
    \!+\! \alpha_{10}^p\alpha_7^{p+1\!} \!+\! \alpha_{10}^p\alpha_8^{p+1\!}\!+\!\alpha_{10}^p\alpha_9^{p+1\!}\right)\!=\!0,\!
\end{gathered}\customanchor{eq:g5_1}{}}
\end{equation}} 

\noindent giving the fiber of the truncation morphism $\mathcal{V}_1 \to \mathcal{V}_2$.

\vspace{8pt}

\noindent Having calculated this fiber, we make two remarks interpreting its structure: 

\begin{rem}
    Firstly, we observe that the first equation is automatically satisfied: Comparing with the equations imposed by the first step of the PFTQ, we see that the summand term vanishes. 

    The phenomenon that the first-step condition appears again can be explained intuitively: In this descent step, we are using the $(-2)^{\text{nd}}$-component of the Witt vectors for the first time. Therefore, it is not surprising to obtain a version of the condition imposed on the first step of a PFTQ from the corresponding coefficient.
\end{rem}

\begin{rem}
    The same argument as in dimension $g=4$ shows that choosing $\alpha_{10}=0$ leads to non-rigid PFTQs and thus is a garbage component. 
\end{rem}

\vspace{6pt}

With this, we have understood the construction of $M_1$. The last step of the PFTQ is always described by the black-box Theorem \ref{thm:laststepPFTQ}, which shows that the fiber of the truncation morphism $\mathcal{V}_0 \to \mathcal{V}_1$ is a $\mathbb{P}^1$-bundle. Consequently, the structure of $M_0$ is also determined, completing our analysis of $5$-dimensional PFTQs assuming generic parameter choices. 

\subsubsection{The non-supergeneral parameter choice}
We return to the point before assuming that the parameter is chosen generically and revisit this situation without imposing conditions on $[\alpha_1\mathpunct{:}\alpha_2\mathpunct{:}\alpha_3\mathpunct{:}\alpha_4\mathpunct{:}\alpha_5]$ this time. To compute $M_2$, we proceed in two steps, but now use an adapted sequence: 
\begin{itemize}
    \item[(1)] Find a candidate for $M_2$ by using the short exact sequence
    \begin{align*}
        0 \to \frac{M_2}{FM_3} \to \frac{M_3}{FM_3} \to \frac{M_3}{M_2} \to 0.
    \end{align*}
    \item[(2)] Apply a generalised version of the descent criterion to find the fiber of the truncation morphism.
\end{itemize}

\noindent For (1), the third isomorphism theorem gives the claimed exact sequence as $$\frac{M_3/FM_3}{M_2/FM_3} \cong \frac{M_3}{M_2}.$$

We compute the dimension of the first term to obtain information on $M_2$. By Lemma \ref{lem:PFTQLemma} (ii), the dimension of the middle term is given by the genus $g=5$ and is thus independent of the choice of parameter. As before, the dimension of the last term is fixed to be $3$ by definition of the PFTQ. Thus, 

$$\text{dim}_k\left(\tfrac{M_2}{FM_3}\right) = \text{dim}_k\left(\tfrac{M_3}{FM_3}\right)-\text{dim}_k\left(\tfrac{M_3}{M_2}\right) = 5-3 = 2.$$

\noindent It is an easy calculation to find five basis elements of $M_3/FM_3$, namely $$\frac{M_3}{FM_3}\cong \langle [\underline{\alpha_1}e_1 + \underline{\alpha_2}e_2 + \underline{\alpha_3}e_3 + \underline{\alpha_4}e_4 + \underline{\alpha_5}e_5], [f_2], [f_3], [f_4], [f_5]\rangle.$$ The two-dimensional subspace $M_2/FM_3$ of $M_3/FM_3$ is thus given by two non-zero elements, and without loss of generality they are of the form
\begin{align*}
    \frac{M_2}{FM_3} = \langle &\alpha_6[v] + \alpha_7[f_2] + \alpha_8[f_3] + \alpha_9[f_4] + \alpha_{10}[f_5], \\
    &\alpha_{11}[f_2] + \alpha_{12}[f_3] + \alpha_{13}[f_4] + \alpha_{14}[f_5]\rangle,
\end{align*}
where
$[\alpha_6\mathpunct{:}\alpha_7\mathpunct{:}\alpha_8\mathpunct{:}\alpha_9\mathpunct{:}\alpha_{10}]\in \mathbb{P}^4$ and $[\alpha_{11}\mathpunct{:}\alpha_{12}\mathpunct{:}\alpha_{13}\mathpunct{:}\alpha_{14}]\in\mathbb{P}^3$. We may lift this to a generating set of $M_2$ as follows: 
{\small
\begin{align*}
    M_2 = \langle &\underbrace{\underline{\alpha_6}v + \underline{\alpha_7}f_2 + \underline{\alpha_8}f_3 + \underline{\alpha_9}f_4 + \underline{\alpha_{10}}f_5}_{=:w_1}, \underbrace{\underline{\alpha_{11}}f_2 + \underline{\alpha_{12}}f_3 + \underline{\alpha_{13}}f_4 + \underline{\alpha_{14}}f_5}_{=:w_2},  FM_3\rangle.
\end{align*}
}

For (2), we want to ensure that the polarisation descends to $M_2$ and hence apply the generalised descent criterion at level $i=2$ fixing $j = 0, 1$. As we add two Witt vectors, we have to check the following eight conditions:
\begin{align*}
    &\underline{j=0}: \langle w_1, F^2w_1\rangle, \langle w_1, F^2w_2\rangle, \langle w_2, F^2w_1\rangle, \langle w_2, F^2w_2\rangle \in W(k) \\
    &\underline{j=1}: \langle w_1, FVw_1\rangle, \langle w_1, FVw_2\rangle, \langle w_2, FVw_1\rangle, \langle w_2, FVw_2\rangle \in W(k)
\end{align*}

\noindent Before examining the conditions, we compute $w_1, w_2$ and their $F^2$-images:
{\small
\begin{align*}
    w_1 &= \underline{\alpha_6\alpha_1}e_1 + \underline{\alpha_6\alpha_2}e_2 + \underline{\alpha_6\alpha_3}e_3 + \underline{\alpha_6\alpha_4}e_4 + \underline{\alpha_6\alpha_5}e_5 + \underline{\alpha_7}f_2 + \underline{\alpha_8}f_3 + \underline{\alpha_9}f_4 + \underline{\alpha_{10}}f_5, \\
    w_2 &= \underline{\alpha_{11}}f_2 + \underline{\alpha_{12}}f_3 + \underline{\alpha_{13}}f_4 + \underline{\alpha_{14}}f_5, \\
    F^2w_1 &= -p\sigma^2(\underline{\alpha_6\alpha_1})e_1 -p\sigma^2(\underline{\alpha_6\alpha_2})e_2 -p\sigma^2(\underline{\alpha_6\alpha_3})e_3 -p\sigma^2(\underline{\alpha_6\alpha_4})e_4 -p\sigma^2(\underline{\alpha_6\alpha_5})e_5 \\
    &\phantom{=} -p\sigma^2(\underline{\alpha_7})f_2 - p\sigma^2(\underline{\alpha_8})f_3 - p\sigma^2(\underline{\alpha_9})f_4 - p\sigma^2(\underline{\alpha_{10}})f_5\\
    F^2w_2 &= - p\sigma^2(\underline{\alpha_{11}})f_2 - p\sigma^2(\underline{\alpha_{12}})f_3 - p\sigma^2(\underline{\alpha_{13}})f_4 - p\sigma^2(\underline{\alpha_{14}})f_5
\end{align*}
}

We start with $j=0$, where the first condition simplifies as follows:
{\small
\begin{align*}
\langle w_1, F^2w_1\rangle &= -\underline{\alpha_6\alpha_2}\sigma^2(\underline{\alpha_7})p\langle e_2, f_2\rangle - \ldots -  \underline{\alpha_6\alpha_5}\sigma^2(\underline{\alpha_{10}})p\langle e_5, f_5\rangle \\
& \phantom{=} - \underline{\alpha_7}\sigma^2(\underline{\alpha_6\alpha_2})p\langle f_2, e_2\rangle - \ldots - \underline{\alpha_{10}}\sigma^2(\underline{\alpha_6\alpha_5})p\langle f_5, e_5\rangle \\
&= p^{-1} \left(-\underline{\alpha_6\alpha_2}\sigma^2(\underline{\alpha_7})-\ldots - \underline{\alpha_6\alpha_5}\sigma^2(\underline{\alpha_{10}}) \right.\\
&\phantom{=}\quad \quad \quad \left.+ \underline{\alpha_7}\sigma^2(\underline{\alpha_6\alpha_2}) + \ldots + \underline{\alpha_{10}}\sigma^2(\underline{\alpha_6\alpha_5})\right) \in W(k).
\end{align*}
}

\noindent Computing the second condition, we obtain
{\small
\begin{align*}
\langle w_1, F^2w_2\rangle &= - \underline{\alpha_6\alpha_2}\sigma^2(\underline{\alpha_{11}})p\langle e_2, f_2\rangle - \ldots -\underline{\alpha_6\alpha_5}\sigma^2(\underline{\alpha_{14}})p\langle e_5, f_5\rangle \\
&= -p^{-1}\left(\underline{\alpha_6\alpha_2}\sigma^2(\underline{\alpha_{11}})+ \ldots + \underline{\alpha_6\alpha_5}\sigma^2(\underline{\alpha_{14}})\right) \in W(k).
\end{align*}
}
Similarly, the third condition simplifies to
{\small
\begin{align*}
\langle w_2, F^2w_1\rangle &= -\underline{\alpha_{11}}\sigma^2(\underline{\alpha_6\alpha_2})p\langle f_2, e_2\rangle - \ldots - \underline{\alpha_{14}}\sigma^2(\underline{\alpha_6\alpha_5})p\langle f_5, e_5\rangle \\
&= p^{-1}\left(\underline{\alpha_{11}}\sigma^2(\underline{\alpha_6\alpha_2}) + \ldots + \alpha_{14}\sigma^2(\underline{\alpha_6\alpha_5})\right) \in W(k).
\end{align*}
}

\noindent Finally, the last condition for $j=0$ is automatically satisfied:
{\small
\begin{align*}
    \langle w_2, F^2w_2\rangle &= \langle \underline{\alpha_{11}}f_2 + \underline{\alpha_{12}}f_3 + \underline{\alpha_{13}}f_4 + \underline{\alpha_{14}}f_5, \\
    &\quad - p\sigma^2(\underline{\alpha_{11}})f_2 - p\sigma^2(\underline{\alpha_{12}})f_3 - p\sigma^2(\underline{\alpha_{13}})f_4 - p\sigma^2(\underline{\alpha_{14}})f_5\rangle = 0 \in W(k)
\end{align*}
}

\noindent as the quasi-polarisation gives $\langle f_i, f_j \rangle = 0$ for any $i,j$. 

Continuing with $j=1$, the first condition is also automatically satisfied:
{\small
\begin{align*}
    \langle w_1, FVw_1\rangle = \langle w_1, pw_1\rangle = p \langle w_1, w_1\rangle =0 \in W(k)
\end{align*}
}

\noindent as the quasi-polarisation is alternating. 

\noindent The second condition gives a non-vanishing contribution, namely
{\small
\begin{align*}
    \langle w_1, FVw_2\rangle &= p \langle w_1, w_2 \rangle = p \underline{\alpha_6}\langle v, \underline{\alpha_{11}}f_2 + \underline{\alpha_{12}}f_3 + \underline{\alpha_{13}}f_4 + \underline{\alpha_{14}}f_5\rangle \\
    &= p^{-1}\left(\underline{\alpha_6\alpha_2\alpha_{11}} +\underline{\alpha_6\alpha_3\alpha_{12}} + \underline{\alpha_6\alpha_4\alpha_{13}} + \underline{\alpha_6\alpha_5\alpha_{14}}\right) \in W(k). 
\end{align*}
}

\noindent The third condition gives the same condition as the second one since $$\langle w_2, FVw_1\rangle = p\langle w_2, w_1\rangle = -p\langle w_1,w_2\rangle = -\langle w_1, FVw_2\rangle.$$

\noindent Lastly, the fourth condition is also automatically satisfied as a result of the alternating property of the quasi-polarisation: 
{\small
\begin{align*}
    \langle w_2, FVw_2 \rangle = \langle w_2, pw_2\rangle = p \langle w_2, w_2 \rangle = 0 \in W(k).
\end{align*}
}

Collecting all of the conditions, we arrive at the following set of equations: 
\makebox[\textwidth][c]{
\begin{minipage}{1.2\textwidth}
{\footnotesize\begin{equation}
\boxed{
\begin{gathered}
\alpha_6(-\alpha_2\alpha_7^{p^2}-\alpha_3\alpha_8^{p^2}-\alpha_4\alpha_9^{p^2}-\alpha_5\alpha_{10}^{p^2} +\alpha_7\alpha_6^{p^2-1}\alpha_2^{p^2}+\alpha_8\alpha_6^{p^2-1}\alpha_3^{p^2}+\alpha_9\alpha_6^{p^2-1}\alpha_4^{p^2}+\alpha_{10}\alpha_6^{p^2-1}\alpha_5^{p^2})= 0, \\
\alpha_6(-\alpha_2\alpha_{11}^{p^2}-\alpha_3\alpha_{12}^{p^2}-\alpha_4\alpha_{13}^{p^2}-\alpha_5\alpha_{14}^{p^2})= 0, \\
\alpha_6^{p^2}(\alpha_{11}\alpha_2^{p^2}+\alpha_{12}\alpha_3^{p^2}+\alpha_{13}\alpha_4^{p^2}+\alpha_{14}\alpha_5^{p^2}) = 0, \\
\alpha_6(\alpha_2\alpha_{11} + \alpha_3\alpha_{12} + \alpha_4\alpha_{13} + \alpha_5\alpha_{14}) = 0,
\end{gathered}}
\end{equation}}
\end{minipage}
}
\\

\noindent giving the fiber of the truncation morphism $\mathcal{V}_2 \to \mathcal{V}_3$.

\begin{rem}
    By the same argument as before, choosing $\alpha_6 = 0$ gives rise to a non-rigid PFTQ and thus is a garbage component.
\end{rem}

\noindent Now, we compute $M_1$ by descending one level down the PFTQ in two steps:
\begin{itemize}
    \item[(1)] Establish a candidate for $M_1$ by establishing a short exact sequence $$0 \to \frac{M_1}{FM_2} \to \frac{M_2}{FM_2} \to \frac{M_2}{M_1} \to 0.$$
    \item[(2)] Find the fiber of the truncation morphism via the descent criterion.
\end{itemize}

\noindent For (1), the third isomorphism theorem gives the short exact sequence as $$\frac{M_2/FM_2}{M_1/FM_2} \cong \frac{M_2}{M_1}.$$

We compute the dimension of the first term to gain information on $M_1$. 
As before, Lemma \ref{lem:PFTQLemma} (ii) gives the dimension of the middle term whilst the dimension of the last term is fixed by definition of the PFTQ. Therefore,

$$\text{dim}_k\left(\tfrac{M_1}{FM_2}\right) = \text{dim}_k\left(\tfrac{M_2}{FM_2}\right)-\text{dim}_k\left(\tfrac{M_2}{M_1}\right) = 5-2 = 3.$$

We now determine a basis for $M_2/FM_2$. Since the PFTQ is rigid, we may assume without loss of generality that $\alpha_6 = 1$. Moreover, since the parameters $[\alpha_{11}\mathpunct{:}\alpha_{12}\mathpunct{:}\alpha_{13}\mathpunct{:}\alpha_{14}]$ are completely symmetric, we may also assume that $\alpha_{11}=1$. Under these assumptions, a straightforward computation shows that a basis for $M_2/FM_2$ is given by
\begin{align*}
    \frac{M_2}{FM_2} \cong \langle [w_1],[w_2], [pe_3], [pe_4], [pe_5]\rangle.
\end{align*}

The three-dimensional subspace $M_1/FM_2$ of $M_2/FM_2$ is thus given by three non-zero elements, and without loss of generality they are of the form
{\small\begin{align*}
    \frac{M_1}{FM_2} = \langle &\alpha_{15}[w_1] + \alpha_{16}[w_2] + \alpha_{17}[pe_3] + \alpha_{18}[pe_4] + \alpha_{19}[pe_5], \\
    &\alpha_{20}[w_2] + \alpha_{21}[pe_3] + \alpha_{22}[pe_4] + \alpha_{23}[pe_5], 
    \alpha_{24}[pe_3] + \alpha_{25}[pe_4] + \alpha_{26}[pe_5]\rangle,
\end{align*}}

\noindent with parameters $[\alpha_{15}\mathpunct{:}\alpha_{16}\mathpunct{:}\alpha_{17}\mathpunct{:}\alpha_{18}\mathpunct{:}\alpha_{19}]\in \mathbb{P}^4$, $[\alpha_{20}\mathpunct{:}\alpha_{21}\mathpunct{:}\alpha_{22}\mathpunct{:}\alpha_{23}]\in\mathbb{P}^3$, and $[\alpha_{24}\mathpunct{:}\alpha_{25}\mathpunct{:}\alpha_{26}]\in \mathbb{P}^2$. We may lift this to a generating set of $M_1$ as follows: 
{\small
\begin{align*}
    M_1 = \langle &\underbrace{\underline{\alpha_{15}}w_1 + \underline{\alpha_{16}}w_2 + \underline{\alpha_{17}}pe_3 + \underline{\alpha_{18}}pe_4 + \underline{\alpha_{19}}pe_5}_{=:u_1},\\ &\underbrace{\underline{\alpha_{20}}w_2 + \underline{\alpha_{21}}pe_3 + \underline{\alpha_{22}}pe_4 + \underline{\alpha_{23}}pe_5}_{=:u_2},
    \underbrace{\underline{\alpha_{24}}pe_3 + \underline{\alpha_{25}}pe_4 + \underline{\alpha_{26}}pe_5}_{=:u_3}, FM_2\rangle.
\end{align*}
}

For (2), we want to ensure that the polarisation descends to $M_1$ and hence apply the generalised descent criterion at level $i=1$ fixing $j = 0$. As we add three Witt vectors, we have to check the following nine conditions:
\begin{align*}
    &\langle u_1, Fu_1\rangle, \langle u_1, Fu_2\rangle, \langle u_1, Fu_3\rangle \in W(k) \\
    &\langle u_2, Fu_1\rangle, \langle u_2, Fu_2\rangle, \langle u_2, Fu_3\rangle \in W(k) \\
    &\langle u_3, Fu_1\rangle, \langle u_3, Fu_2\rangle, \langle u_3, Fu_3\rangle \in W(k)
\end{align*}

\noindent Before examining the conditions, we compute $u_1, u_2, u_3$.
{\small
\begin{align*}
    u_1 &= \underline{\alpha_1\alpha_6\alpha_{15}}e_1 + \underline{\alpha_2\alpha_6\alpha_{15}}e_2 + (\underline{\alpha_3\alpha_6\alpha_{15}} + \underline{\alpha_{17}p)}e_3 + (\underline{\alpha_4\alpha_6\alpha_{15}} + \underline{\alpha_{18}p)}e_4 \\
    &\phantom{=}+ (\underline{\alpha_5\alpha_6\alpha_{15}} + \underline{\alpha_{19}p)}e_5 
    + (\underline{\alpha_7\alpha_{15}}+\underline{\alpha_{11}\alpha_{16}})f_2 + (\underline{\alpha_8\alpha_{15}}+\underline{\alpha_{12}\alpha_{16}})f_3 \\
    &\phantom{=}+ (\underline{\alpha_9\alpha_{15}}+\underline{\alpha_{13}\alpha_{16}})f_4 + (\underline{\alpha_{10}\alpha_{15}}+\underline{\alpha_{14}\alpha_{16}})f_5, \\
    u_2 &= \underline{\alpha_{21}}pe_3 + \underline{\alpha_{22}}pe_4 + \underline{\alpha_{23}}pe_5 + \underline{\alpha_{11}\alpha_{20}}f_2 + \underline{\alpha_{12}\alpha_{20}}f_3 + \underline{\alpha_{13}\alpha_{20}}f_4 + \underline{\alpha_{14}\alpha_{20}}f_5, \\
    u_3 &= \underline{\alpha_{24}}pe_3 + \underline{\alpha_{25}}pe_4 + \underline{\alpha_{26}}pe_5.
\end{align*}
}
Similarly, we calculate their images under the Frobenius map:
{\small
\begin{align*}
    Fu_1 &= -(\sigma(\underline{\alpha_7\alpha_{15}})+\sigma(\underline{\alpha_{11}\alpha_{16}}))pe_2 - (\sigma(\underline{\alpha_8\alpha_{15}})+\sigma(\underline{\alpha_{12}\alpha_{16}}))pe_3 \\
    &\phantom{=} - (\sigma(\underline{\alpha_9\alpha_{15}})+\sigma(\underline{\alpha_{13}\alpha_{16}}))pe_4 - (\sigma(\underline{\alpha_{10}\alpha_{15}})+\sigma(\underline{\alpha_{14}\alpha_{16}}))pe_5 \\
    &\phantom{=} +\sigma(\underline{\alpha_1\alpha_6\alpha_{15}})f_1 +\sigma(\underline{\alpha_2\alpha_6\alpha_{15}})f_2 + (\sigma(\underline{\alpha_3\alpha_6\alpha_{15}})+\sigma(\underline{\alpha_{17}}p))f_3 \\
    &\phantom{=} + (\sigma(\underline{\alpha_4\alpha_6\alpha_{15}})+\sigma(\underline{\alpha_{18}}p))f_4 + (\sigma(\underline{\alpha_5\alpha_6\alpha_{15}})+\sigma(\underline{\alpha_{19}}p))f_5 \\
    Fu_2 &= -\sigma(\underline{\alpha_{11}\alpha_{20}})pe_2 -\sigma(\underline{\alpha_{12}\alpha_{20}})pe_3 -\sigma(\underline{\alpha_{13}\alpha_{20}})pe_4 -\sigma(\underline{\alpha_{14}\alpha_{20}})pe_5 \\
    &\phantom{=} + \sigma(\underline{\alpha_{21}})pf_3 + \sigma(\underline{\alpha_{22}})pf_4 + \sigma(\underline{\alpha_{23}})pf_5 \\
    Fu_3 &= \sigma(\underline{\alpha_{24}})pf_3 + \sigma(\underline{\alpha_{25}})pf_4 + \sigma(\underline{\alpha_{26}})pf_5
\end{align*}
}

\noindent Using the representation of the quasi-polarisation, the first condition gives
{\small
\begin{align*}
    \langle u_1, Fu_1\rangle 
    &= p^{-2} \left[\underline{\alpha_1\alpha_6\alpha_{15}}\sigma(\underline{\alpha_1\alpha_6\alpha_{15}}) + \underline{\alpha_2\alpha_6\alpha_{15}}\sigma(\underline{\alpha_2\alpha_6\alpha_{15}}) + \underline{\alpha_3\alpha_6\alpha_{15}}\sigma(\underline{\alpha_3\alpha_6\alpha_{15}}) \right.\\
    & \left. \quad \quad  + \underline{\alpha_4\alpha_6\alpha_{15}}\sigma(\underline{\alpha_4\alpha_6\alpha_{15}}) +
    \underline{\alpha_5\alpha_6\alpha_{15}}\sigma(\underline{\alpha_5\alpha_6\alpha_{15}})
    \right] \\
    &+ p^{-1} \left[\underline{\alpha_3\alpha_6\alpha_{15}}\sigma(\underline{\alpha_{17}}) + \underline{\alpha_{17}}\sigma(\underline{\alpha_3\alpha_6\alpha_{15}}) + \underline{\alpha_4\alpha_6\alpha_{15}}\sigma(\underline{\alpha_{18}}) \right.\\
    & \left. \quad \quad + \underline{\alpha_{18}}\sigma(\underline{\alpha_4\alpha_6\alpha_{15}}) + \underline{\alpha_5\alpha_6\alpha_{15}}\sigma(\underline{\alpha_{19}}) + \underline{\alpha_{19}}\sigma(\underline{\alpha_5\alpha_6\alpha_{15}}) \right.\\
    & \left. \quad \quad +\underline{\alpha_7\alpha_{15}}\sigma(\underline{\alpha_7\alpha_{15}})+\underline{\alpha_7\alpha_{15}}\sigma(\underline{\alpha_{11}\alpha_{16}}) + \underline{\alpha_{11}\alpha_{16}}\sigma(\underline{\alpha_7\alpha_{15}}) + \underline{\alpha_{11}\alpha_{16}}\sigma(\underline{\alpha_{11}\alpha_{16}}) \right. \\
    & \left. \quad \quad +\underline{\alpha_8\alpha_{15}}\sigma(\underline{\alpha_8\alpha_{15}})+\underline{\alpha_8\alpha_{15}}\sigma(\underline{\alpha_{12}\alpha_{16}}) + \underline{\alpha_{12}\alpha_{16}}\sigma(\underline{\alpha_8\alpha_{15}}) + \underline{\alpha_{12}\alpha_{16}}\sigma(\underline{\alpha_{12}\alpha_{16}}) \right. \\
    & \left. \quad \quad  +\underline{\alpha_9\alpha_{15}}\sigma(\underline{\alpha_9\alpha_{15}})+\underline{\alpha_9\alpha_{15}}\sigma(\underline{\alpha_{13}\alpha_{16}}) + \underline{\alpha_{13}\alpha_{16}}\sigma(\underline{\alpha_9\alpha_{15}}) + \underline{\alpha_{13}\alpha_{16}}\sigma(\underline{\alpha_{13}\alpha_{16}}) \right. \\
    & \left. \quad \quad +\underline{\alpha_{10}\alpha_{15}}\sigma(\underline{\alpha_{10}\alpha_{15}})+\underline{\alpha_{10}\alpha_{15}}\sigma(\underline{\alpha_{14}\alpha_{16}}) + \underline{\alpha_{14}\alpha_{16}}\sigma(\underline{\alpha_{10}\alpha_{15}}) + \underline{\alpha_{14}\alpha_{16}}\sigma(\underline{\alpha_{14}\alpha_{16}})
    \right]\\
    &+\left[\underline{\alpha_{17}}\sigma(\underline{\alpha_{17}})+ \underline{\alpha_{18}}\sigma(\underline{\alpha_{18}}) + \underline{\alpha_{19}}\sigma(\underline{\alpha_{19}})\right] \in W(k).
\end{align*}
}

\noindent Computing the second condition, we obtain
{\small
\begin{align*}
    \langle u_1, Fu_2\rangle &= p^{-1} \left[\underline{\alpha_3\alpha_6\alpha_{15}}\sigma(\underline{\alpha_{21}}) + \underline{\alpha_4\alpha_6\alpha_{15}}\sigma(\underline{\alpha_{22}}) + \underline{\alpha_5\alpha_6\alpha_{15}}\sigma(\underline{\alpha_{23}}) \right.\\
    &\left. \quad \quad \quad + \underline{\alpha_7\alpha_{15}}\sigma(\underline{\alpha_{11}\alpha_{20}})+\underline{\alpha_{11}\alpha_{16}}\sigma(\underline{\alpha_{11}\alpha_{20}}) + \underline{\alpha_8\alpha_{15}}\sigma(\underline{\alpha_{12}\alpha_{20}}) \right. \\
    &\left. \quad \quad \quad +\underline{\alpha_{12}\alpha_{16}}\sigma(\underline{\alpha_{12}\alpha_{20}}) + \underline{\alpha_9\alpha_{15}}\sigma(\underline{\alpha_{13}\alpha_{20}})+ \underline{\alpha_{13}\alpha_{16}}\sigma(\underline{\alpha_{13}\alpha_{20}}) \right. \\
    &\left. \quad \quad \quad + \underline{\alpha_{10}\alpha_{15}}\sigma(\underline{\alpha_{14}\alpha_{20}}) + \underline{\alpha_{14}\alpha_{16}}\sigma(\underline{\alpha_{14}\alpha_{20}}) \right] \\
    &\phantom{=}+\left[\underline{\alpha_{17}}\sigma(\underline{\alpha_{21}}) + \underline{\alpha_{18}}\sigma(\underline{\alpha_{22}}) + \underline{\alpha_{19}}\sigma(\underline{\alpha_{23}}) \right] \in W(k).
\end{align*} 
}
The third condition simplifies as follows:
{\small
\begin{align*}
    \langle u_1, Fu_3\rangle &= p^{-1}\left[\underline{\alpha_3\alpha_6\alpha_{15}}\sigma(\underline{\alpha_{24}}) + \underline{\alpha_4\alpha_6\alpha_{15}} \sigma(\underline{\alpha_{25}}) + \underline{\alpha_5\alpha_6\alpha_{15}} \sigma(\underline{\alpha_{26}})\right] \\
    &\phantom{=}+\underline{\alpha_{17}}\sigma(\underline{\alpha_{24}}) +\underline{\alpha_{18}}\sigma(\underline{\alpha_{25}}) +\underline{\alpha_{19}}\sigma(\underline{\alpha_{26}}) \in W(k).
\end{align*}
}
\noindent Similarly, we compute the fourth condition:
{\small
\begin{align*}
    \langle u_2, Fu_1\rangle &= p^{-1} \left[\sigma(\underline{\alpha_3\alpha_6\alpha_{15}})\underline{\alpha_{21}} + \sigma(\underline{\alpha_4\alpha_6\alpha_{15}})\underline{\alpha_{22}} + \sigma(\underline{\alpha_5\alpha_6\alpha_{15}})\underline{\alpha_{23}} \right.\\
    &\left. \quad \quad \quad + \sigma(\underline{\alpha_7\alpha_{15}})\underline{\alpha_{11}\alpha_{20}}+\sigma(\underline{\alpha_{11}\alpha_{16}})\underline{\alpha_{11}\alpha_{20}} + \sigma(\underline{\alpha_8\alpha_{15}})\underline{\alpha_{12}\alpha_{20}} \right. \\
    &\left. \quad \quad \quad +\sigma(\underline{\alpha_{12}\alpha_{16}})\underline{\alpha_{12}\alpha_{20}} + \sigma(\underline{\alpha_9\alpha_{15}})\underline{\alpha_{13}\alpha_{20}}+ \sigma(\underline{\alpha_{13}\alpha_{16}})\underline{\alpha_{13}\alpha_{20}} \right. \\
    &\left. \quad \quad \quad + \sigma(\underline{\alpha_{10}\alpha_{15}})\underline{\alpha_{14}\alpha_{20}} + \sigma(\underline{\alpha_{14}\alpha_{16}})\underline{\alpha_{14}\alpha_{20}} \right] \\
    &\phantom{=}+\left[\sigma(\underline{\alpha_{17}})\underline{\alpha_{21}} + \sigma(\underline{\alpha_{18}})\underline{\alpha_{22}} + \sigma(\underline{\alpha_{19}})\underline{\alpha_{23}} \right] \in W(k).
\end{align*} 
}
The fifth condition yields the following expression:
{\small
\begin{align*}
    \langle u_2, Fu_2\rangle = p^{-1}\left[\underline{\alpha_{11}\alpha_{20}}\sigma(\underline{\alpha_{11}\alpha_{20}}) + \ldots + \underline{\alpha_{14}\alpha_{20}}\sigma(\underline{\alpha_{14}\alpha_{20}})\right] \in W(k).
\end{align*}
}
The sixth condition is automatically satisfied as
{\small
\begin{align*}
    \langle u_2, Fu_3\rangle = \underline{\alpha_{21}}\sigma(\underline{\alpha_{24}}) + \underline{\alpha_{22}}\sigma(\underline{\alpha_{25}})+ \underline{\alpha_{23}}\sigma(\underline{\alpha_{26}})\in W(k).
\end{align*}
}
The last non-trivial condition is the seventh one:
{\small
\begin{align*}
    \langle u_3, Fu_1\rangle &= p^{-1}[\underline{\alpha_{24}} \sigma(\underline{\alpha_3\alpha_6\alpha_{15}}) + \underline{\alpha_{25}} \sigma(\underline{\alpha_4\alpha_6\alpha_{15}}) + \underline{\alpha_{26}} \sigma(\underline{\alpha_5\alpha_6\alpha_{15}})] \\
    &\phantom{=} +\underline{\alpha_{24}}\sigma(\underline{\alpha_{17}}) + \underline{\alpha_{25}}\sigma(\underline{\alpha_{18}}) + \underline{\alpha_{26}}\sigma(\underline{\alpha_{19}})\in W(k).
\end{align*}
}
The eighth and ninth conditions are automatically satisfied as
{\small
\begin{align*}
    \langle u_3, Fu_2\rangle &= \underline{\alpha_{24}}\sigma(\underline{\alpha_{21}}) + \underline{\alpha_{25}}\sigma(\underline{\alpha_{22}})+ \underline{\alpha_{26}}\sigma(\underline{\alpha_{23}})\in W(k). \\
    \langle u_3, Fu_3\rangle &= \underline{\alpha_{24}}\sigma(\underline{\alpha_{24}}) + \underline{\alpha_{25}}\sigma(\underline{\alpha_{25}})+ \underline{\alpha_{26}}\sigma(\underline{\alpha_{26}})\in W(k).
\end{align*}
}

To satisfy the conditions, the coefficients of $p^{-2}$ and $p^{-1}$ have to vanish. As we are working with Teichmüller lifts, this is equivalent to the equations:

\makebox[\textwidth][c]{
\begin{minipage}{1.4\textwidth}
{\footnotesize\begin{equation}\label{eq:arb5}
\boxed{
\begin{gathered}
\alpha_6^{p+1}\alpha_{15}^{p+1}(\alpha_1^{p+1} + \alpha_2^{p+1} + \alpha_3^{p+1} + \alpha_4^{p+1} + \alpha_5^{p+1})= 0, \\
\left\{\begin{gathered}
\alpha_3\alpha_6\alpha_{15}\alpha_{17}^p+\alpha_{17}\alpha_3^p\alpha_6^p\alpha_{15}^{p} + \alpha_4\alpha_6\alpha_{15}\alpha_{18}^p + \alpha_{18}\alpha_4^p\alpha_6^p\alpha_{15}^{p} + \alpha_5\alpha_6\alpha_{15}\alpha_{19}^p + \alpha_{19}\alpha_5^p\alpha_6^p\alpha_{15}^{p} \\
+\alpha_7^{p+1}\alpha_{15}^{p+1} + \alpha_7\alpha_{15}\alpha_{11}^p\alpha_{16}^p + \alpha_{11}\alpha_{16}\alpha_7^p\alpha_{15}^{p} + \alpha_{11}^{p+1}\alpha_{16}^{p+1} + \alpha_8^{p+1}\alpha_{15}^{p+1} + \alpha_8\alpha_{15}\alpha_{12}^p\alpha_{16}^p + \alpha_{12}\alpha_{16}\alpha_8^p\alpha_{15}^{p} + \alpha_{12}^{p+1}\alpha_{16}^{p+1}\\
+\alpha_9^{p+1}\alpha_{15}^{p+1} + \alpha_9\alpha_{15}\alpha_{13}^p\alpha_{16}^p + \alpha_{13}\alpha_{16}\alpha_9^p\alpha_{15}^{p} + \alpha_{13}^{p+1}\alpha_{16}^{p+1} + 
\alpha_{10}^{p+1}\alpha_{15}^{p+1} + \alpha_{10}\alpha_{15}\alpha_{14}^p\alpha_{16}^p + \alpha_{14}\alpha_{16}\alpha_{10}^p\alpha_{15}^{p} + \alpha_{14}^{p+1}\alpha_{16}^{p+1} 
\end{gathered} \right\}
= 0,\\
\left\{\begin{gathered}\alpha_3\alpha_6\alpha_{15}\alpha_{21}^p + \alpha_4\alpha_6\alpha_{15}\alpha_{22}^p + \alpha_5\alpha_6\alpha_{15}\alpha_{23}^p + \alpha_7\alpha_{15}\alpha_{11}^p\alpha_{20}^p + \alpha_{16}\alpha_{11}^{p+1}\alpha_{20}^p + \alpha_8\alpha_{15}\alpha_{12}^p\alpha_{20}^p \\
+\alpha_{12}^{p+1}\alpha_{16}\alpha_{20}^p + \alpha_9\alpha_{15}\alpha_{13}^p\alpha_{20}^p + \alpha_{13}^{p+1}\alpha_{16}\alpha_{20}^p + \alpha_{10}\alpha_{15}\alpha_{14}^p\alpha_{20}^p + \alpha_{14}^{p+1}\alpha_{16}\alpha_{20}^p 
\end{gathered} \right\}
= 0,\\
\alpha_6\alpha_{15}(\alpha_3\alpha_{24}^p + \alpha_4\alpha_{25}^p + \alpha_5\alpha_{26}^p) = 0,\\
\left\{\begin{gathered}
\alpha_3^p\alpha_6^p\alpha_{15}^p\alpha_{21} + \alpha_4^p\alpha_6^p\alpha_{15}^p\alpha_{22} + \alpha_5^p\alpha_6^p\alpha_{15}^p\alpha_{23} + \alpha_7^p\alpha_{15}^p\alpha_{11}\alpha_{20} + \alpha_{16}^p\alpha_{11}^{p+1}\alpha_{20} + \alpha_8^p\alpha_{15}^p\alpha_{12}\alpha_{20} \\
+\alpha_{12}^{p+1}\alpha_{16}^p\alpha_{20} + \alpha_9^p\alpha_{15}^p\alpha_{13}\alpha_{20} + \alpha_{13}^{p+1}\alpha_{16}^p\alpha_{20} + \alpha_{10}^p\alpha_{15}^p\alpha_{14}\alpha_{20} + \alpha_{14}^{p+1}\alpha_{16}^p\alpha_{20}
\end{gathered} \right\}
= 0,\\
\alpha_{20}^{p+1}(\alpha_{11}^{p+1} + \alpha_{12}^{p+1} + \alpha_{13}^{p+1} + \alpha_{14}^{p+1}) = 0,\\
\alpha_6^p\alpha_{15}^p(\alpha_3^p\alpha_{24} + \alpha_4^p\alpha_{25} + \alpha_5^p\alpha_{26}) = 0.
\end{gathered}}
\end{equation}}
\end{minipage}
}

\begin{rem}
    By the same argument as before, we see that simultaneously choosing $\alpha_{15} = \alpha_{16} = \alpha_{20} = 0$ gives rise to a non-rigid PFTQ and thus defines a garbage component.  
\end{rem}

\vspace{6pt}

With this, we have understood the construction of $M_1$. The last step of the PFTQ is always described by the black-box Theorem \ref{thm:laststepPFTQ}, which shows that the fiber of the truncation morphism $\mathcal{V}_0 \to \mathcal{V}_1$ is a $\mathbb{P}^1$-bundle. Consequently, the structure of $M_0$ is also determined, completing our analysis of $5$-dimensional PFTQs for arbitrary parameter choices.

\printbibliography{}

\end{document}